\def\ms{\medskip}
\def\sm{\setminus}
\def\ti{\tilde}
\def\ve{\varepsilon}
\def\1{\textbf{1}}
\def\eps{\varepsilon}
\def\C{\mathbb{C}}
\def\D{\mathbb{D}}
\def\E{\mathbf{E}}
\def\P{\mathbf{P}}
\def\R{\mathbb{R}}
\def\N{\mathbb{N}}
\theoremstyle{plain}
\newtheorem*{thm*}{Theorem}
\newtheorem{thm}{Theorem}[section]
\newtheorem{lem}[thm]{Lemma}
\newtheorem{cor}[thm]{Corollary}
\newtheorem{prop}[thm]{Proposition}
\newtheorem{definition}{Definition}
\theoremstyle{definition}
\newtheorem*{eg*}{Example}
\newtheorem*{egs*}{Examples}
\newtheorem*{def*}{Definition}
\theoremstyle{remark}
\newtheorem*{rmk*}{Remark}
\newtheorem*{rmks*}{Remarks}
\numberwithin{equation}{section}
\begin{document}
\title[Zeros of random polynomials and its higher derivatives]{Zeros of random polynomials and its higher derivatives}

%%%%%%%%%%%%%%%%%%%%%%%%%%%%% author %%%%%%%%%%%%%%%%%%%%%%%%%%%%
\author{Sung-Soo Byun } 
%\thanks{S.-S. Byun was partially supported by Samsung Science and Technology Foundation (SSTF-BA1401-01).}
\address{\noindent  Department of Mathematical Sciences, Seoul National University,\newline Seoul, 151-747, Republic of Korea}
\email{sungsoobyun@snu.ac.kr}  

\author{Jaehun Lee}
%\thanks{S.-S. Byun was partially supported by Samsung Science and Technology Foundation (SSTF-BA1401-01).} 
\address{\noindent  Department of Mathematical Sciences, Seoul National University,\newline Seoul, 151-747, Republic of Korea}
\email{hun618@snu.ac.kr}  

\author{Tulasi Ram Reddy}
\address{Division of Sciences, New York University Abu Dhabi, \newline Saadiyat Island, Abu Dhabi, United Arab Emirates}
\email{tulasi@nyu.edu}
%%%%%%%%%%%%%%%%%%%%%%%%%%%%% author %%%%%%%%%%%%%%%%%%%%%%%%%%%%

\thanks{S.-S. Byun was partially supported by Samsung Science and Technology Foundation (SSTF-BA1401-01). J.-H. Lee was partially supported by the National Research Foundation of Korea (NRF-2016K2A9A2A13003815). Financial support by German Research Foundation (DFG) for S.-S. Byun and J.-H. Lee through the IRTG 2235 is gratefully acknowledged.} 

%%%%%%%%%%%%%%%%%%%%%%Abstract%%%%%%%%%%%%%%%%%%%%%%%%%%%%%%%%%%%
\begin{abstract}
 In this article we study the limiting empirical measure of zeros of higher derivatives for sequences of random polynomials. We show that these measures agree with the limiting empirical measure of zeros of corresponding random polynomials. Various models of random polynomials are considered by introducing randomness through multiplying a factor with a random zero or removing a zero at random for a given sequence of deterministic polynomials. We also obtain similar results for random polynomials whose zeros are given by i.i.d. random variables. As an application, we show that these phenomenon appear for random polynomials whose zeros are given by the 2D Coulomb gas density. 
\end{abstract}
%%%%%%%%%%%%%%%%%%%%%%Abstract%%%%%%%%%%%%%%%%%%%%%%%%%%%%%%%%%%%

\maketitle

%%%%%%%%%%%%%%%%%%%%%%%introduction%%%%%%%%%%%%%%%%%%%%%%%%%%%%%%%%%
\noindent 2010 \textit{Mathematics Subject Classification:} Primary 60G99; Secondary 30C15.
\\
\textit{Key words and phrases:} random polynomials, zeros of derivatives, 2D Coulomb gas, zeros of characteristic polynomials, logarithmic potential theory.
\section{Introduction}
The study of zeros of polynomials and its derivatives have been of interest for long. A well known result relating these refer to Gauss and Lucas' theorem which states that the critical points of a polynomial  lie in the convex hull formed by the zeros of the polynomial. There have been several extensions and refinements of this result. For instance, Pereira \cite{pereira} and Malamud \cite{malamud} independently extended this result to relating the zeros and critical points with a doubly stochastic matrix. For a deeper discussion on this topic, we refer the reader to classical texts \cite{marden} and \cite{rahman}.

In the theory of random polynomials, one natural way of constructing model is imposing randomness to the coefficients of the polynomials. Then it is of interest to understand the limiting behavior of zero sets when the degree of polynomials goes to infinity. This question was answered by Kabluchko and Zaparozhets in the case that coefficients are taken to be scaled i.i.d. random variables, see \cite{kab_zap}. They showed that the limiting empirical measure of zeros is radially symmetric and depends only on the variance profile of the coefficients. Following the result of Kabluchko and Zaparozhets, one can observe that the empirical measure of zeros of higher derivatives of these polynomials converge to the same measure as that of the zeros of these polynomials.

It is to be noted that when the zeros are all taken to be on the real line, this phenomenon holds for any suitable deterministic sequence of polynomials. This follows from the elementary property that the zeros and critical points of the polynomial interlace. However this phenomenon cease to hold at this generality when the zeros are allowed to be complex numbers. A simple example can be constructed by choosing $P_n(z)=z^n-1$, where the zeros are uniformly distributed on the unit circle, whereas the critical points are accumulated at the origin. For more examples, where this phenomenon does not hold, see the discussion in \cite{Re16b}.

It was first conjectured by Pemantle and Rivin in \cite{PR13} that the limiting empirical measures of zero set and critical point coincide for general random polynomials. Moreover in \cite{PR13}, they studied a model having zeros chosen by i.i.d. random variables and established this phenomenon when the measure $\mu$ from which the zeros are chosen has finite 1-energy. In \cite{sneha}, Subramanian showed a similar result in the case of general probability measure $\mu$ supported on unit circle. Kabluchko extended this phenomenon for general probability measure $\mu$, see \cite{Ka15}.  

In addition, the case that random zeros interact with each other has also been studied. One of the important examples is the characteristic polynomial of some random matrix model. For instance, in \cite{orourke},  O'Rourke showed this phenomenon in the case of characteristic polynomials of circular ensembles. Dennis and Hannay studied the association of critical points of the characteristic polynomial of Ginibre random matrix to its zeros, see \cite{hannay_dennis}. Hanin in \cite{hanin1}, studied the correlation functions of critical points and zeros of spherical polynomial ensembles. It is pertinent to note that whenever the support of the limiting measure of the zeros does not divide the complex plane into disconnected components this phenomenon holds without invoking any randomness, see \cite[Example 4]{eremenko}. As a consequence, this phenomenon holds for the characteristic polynomial of Ginibre random matrix.

It is  natural to study this phenomenon for the higher derivatives too.  We remark that in the case that the limiting empirical measure of zero set is purely atomic, this phenomenon trivially holds. For non-atomic case, Hu and Chang in \cite{chang_hu} studied this problem when the zeros of polynomials are in a bounded strip. 

Our first purpose of this paper is to extend some known results of this phenomena from critical points to zeros of higher derivatives. See Theorem~\ref{iid zero k-th} for the generalization of result in \cite{Ka15} and Theorem~\ref{Ber zero k-th},~\ref{Ber zero array} for those of theorems in \cite{Re16b}.
Our second aim is to extend the class of random polynomials in which this phenomenon occurs by virtue of introducing randomness to a given deterministic polynomials. More precisely, we will impose randomness in such a way as to exclude one zero at random (Theorem~\ref{remove zero}) or to include finite random zeros (Theorem~\ref{t:add}). We remark that Theorem~\ref{remove zero} affirmatively settles the conjecture posed in \cite{Re16b}, when measure is non-atomic. In the last section, we will utilize methods used above to show that this phenomenon also occurs in the case when zeros of the polynomials are given by the 2D Coulomb gas density with general external potential, see Theorem~\ref{Coulomb critical}.

\subsection{Symbols and notation}
\hfill
\ms

Throughout this paper, we will use the following symbols and notation. Let $\D_r$ be the disk of radius $r$, centered at origin. For any polynomial
$P$, we denote by $\mathcal Z(P)$ the multi-set of zeros of $P$ and $\mathcal M(P)$ the uniform probability measure supported on $\mathcal Z(P)$. We write $\delta_a$ the Dirac measure supported at $a$. We recall preliminary definitions introduced in \cite{Re16b}.

\begin{definition}[$\mu$-distributed sequence/triangular array]
	Let $\{a_n\}_{n\ge 1}$ $($resp, $\{a_{n,i}\}_{n\ge1;1\le i\le n})$ be a sequence $($resp., triangular array$\,)$ of complex numbers. If the measure $\frac{1}{n}\sum_{i=1}^n \delta_{a_i}$ $($resp., $\frac{1}{n}\sum_{i=1}^n \delta_{a_{n,i}})$ converges weakly to a probability measure $\mu$, we call such a sequence $($resp, triangular array$\,)$ to be \textit{$\mu$-distributed}. 
\end{definition}

\begin{definition}[$\log$-Ces\'{a}ro bounded sequence/triangular array]
	We say a sequence $($resp, triangular array$\,)$ of complex numbers $\{a_n\}_{n\ge 1}$ $($resp, $\{a_{n,i}\}_{n\ge1;1\le i\le n})$ is \textit{log-Ces\'{a}ro-bounded} if the Ces\'{a}ro means of the positive part of their logarithms are bounded, i.e., the sequence \linebreak $\left \{\frac{1}{n}\sum_{i=1}^n \log_+|a_i| \right \}_{n \ge 1}$, $($resp, $ \left \{\frac{1}{n}\sum_{i=1}^n \log_+|a_{n,i}|\right \}_{n \ge 1})$ is bounded. 
\end{definition}

\subsection{Results}
\hfill 
\medskip

Our first result deals with the random polynomial whose zeros are chosen to be i.i.d. random variables. Note that the empirical measure of these zeros converge to the probability measure from which the random variables are drawn. We show that the same phenomenon appear for the zeros of any derivative of these random polynomials. This result generalizes the result of Kabluchko \cite{Ka15} which was established for zeros of first derivative (critical points).

\begin{thm} \label{iid zero k-th}
	Let  $\{ z_i \}_{i\ge1}$ be i.i.d. random variables distributed according to $\mu$, where $\mu$ is an arbitrary probability measure on $\C$. For each $n\in \mathbb N$, let 
	\begin{equation}
	\displaystyle P_n(z):=(z-z_1)\cdots(z-z_n).
	\end{equation}
	Then for any $k \in \mathbb N$, $\mathcal{M}(P^{(k)}_n) \rightarrow \mu$ in probability. 
\end{thm}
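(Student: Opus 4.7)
The strategy is to prove $\frac{1}{n-k}\log|P_n^{(k)}(z)| \to \int\log|z-w|\,d\mu(w)$ in $L^1_{\mathrm{loc}}(\mathbb{C})$ in probability; applying $\frac{1}{2\pi}\Delta$ in the distributional sense then yields $\mathcal{M}(P_n^{(k)})\to\mu$ weakly in probability. This extends Kabluchko's potential-theoretic argument for critical points \cite{Ka15} to higher derivatives.

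The new identity used is
\begin{equation*}
\frac{P_n^{(k)}(z)}{P_n(z)} = k!\,e_k\!\left(\frac{1}{z-z_1},\ldots,\frac{1}{z-z_n}\right),
\end{equation*}
where $e_k$ denotes the $k$-th elementary symmetric polynomial, obtained by matching coefficients of $h^k$ in $P_n(z+h)/P_n(z)=\prod_{i=1}^n(1+h/(z-z_i))$. Writing $\log|P_n^{(k)}(z)|=\log|P_n(z)|+\log|P_n^{(k)}(z)/P_n(z)|$, the SLLN yields $\frac{1}{n}\log|P_n(z)|\to\int\log|z-w|\,d\mu(w)$ a.s.\ at every $z$ where $w\mapsto\log|z-w|\in L^1(\mu)$, which holds Lebesgue-a.e.\ because the logarithmic potential of $\mu$ is locally integrable. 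The problem therefore reduces to showing $\frac{1}{n}\log|P_n^{(k)}(z)/P_n(z)|\to 0$ in probability at Lebesgue-a.e.\ $z$.

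For the upper bound I use the deterministic inequality $|e_k(x_1,\ldots,x_n)|\le\frac{1}{k!}\bigl(\sum_i|x_i|\bigr)^k$ together with the SLLN for $\frac{1}{n}\sum_i|z-z_i|^{-1}$ (finite Lebesgue-a.e.\ by Fubini), giving $\log|P_n^{(k)}(z)/P_n(z)|\le k\log n+O(1)$ a.s. For the lower bound I view $\binom{n}{k}^{-1}e_k\!\left(\frac{1}{z-z_1},\ldots,\frac{1}{z-z_n}\right)$ as a U-statistic of order $k$ with symmetric kernel $h_z(w_1,\ldots,w_k)=\prod_{j=1}^k(z-w_j)^{-1}$; since $\int|h_z|\,d\mu^{\otimes k}$ is finite Lebesgue-a.e., Hoeffding's strong LLN for U-statistics gives the a.s.\ limit $C_\mu(z)^k$, where $C_\mu(z)=\int(z-w)^{-1}\,d\mu(w)$ is the Cauchy transform of $\mu$. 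Off the zero set of $C_\mu$ this pins down $\log|P_n^{(k)}(z)/P_n(z)|=k\log n+O(1)$ a.s. That zero set is Lebesgue-null: $C_\mu$ is a non-constant holomorphic function on $\mathbb{C}\setminus\mathrm{supp}(\mu)$ (behaving like $1/z$ at infinity), so its zeros on the unbounded component are isolated, and on $\mathrm{supp}(\mu)$ the Cauchy transform is defined Lebesgue-a.e.\ with a similar analytic nonvanishing property.

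To promote pointwise-in-probability convergence to $L^1_{\mathrm{loc}}$-in-probability, I invoke the classical fact that a sequence of subharmonic functions that is uniformly locally bounded above and converges pointwise a.e.\ to a non-$(-\infty)$ subharmonic limit converges in $L^1_{\mathrm{loc}}$. The uniform upper bound on $\frac{1}{n-k}\log|P_n^{(k)}|$ follows from iterated Gauss--Lucas (zeros of $P_n^{(k)}$ lie in the convex hull of $\{z_1,\ldots,z_n\}$, which is tight on compacts because $\mu$ is) combined with elementary polynomial growth estimates on large disks. The main technical obstacle is the U-statistic lower bound: a priori cancellations could make $|e_k|$ super-polynomially small, and Hoeffding's LLN only precludes this at points where $C_\mu\ne 0$, so the whole argument rests on the Lebesgue-null control of $\{C_\mu=0\}$ sketched above.
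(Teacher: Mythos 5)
Your upper bound and the $L^1_{\mathrm{loc}}$-promotion via subharmonicity are fine, but the lower bound has a fatal gap: the set $\{z : C_\mu(z)=0\}$ is \emph{not} Lebesgue-null in general, and that is exactly the hard case. Take $\mu$ to be the uniform measure on the unit circle. Then $C_\mu(z)=\int(z-w)^{-1}d\mu(w)\equiv 0$ on the entire open unit disk, a set of positive Lebesgue measure. Your appeal to Hoeffding's SLLN for U-statistics gives $\binom{n}{k}^{-1}e_k\!\left(\tfrac{1}{z-z_1},\dots,\tfrac{1}{z-z_n}\right)\to C_\mu(z)^k$ almost surely, but when $C_\mu(z)=0$ this only says the normalized $e_k$ tends to zero; it provides no lower bound whatsoever on $\log|e_k|$, so you cannot conclude $\log|P_n^{(k)}/P_n| = k\log n + O(1)$. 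Your claim that $C_\mu$ has ``a similar analytic nonvanishing property'' on $\mathrm{supp}(\mu)$ does not rescue this, because in the circle example the problematic set is the open disk, which is disjoint from $\mathrm{supp}(\mu)$; and more importantly you give no argument that covers this regime.

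This is precisely where Kabluchko's argument (for $k=1$) and the paper's induction (for general $k$) do something fundamentally different: rather than computing a deterministic limit for the normalized elementary symmetric polynomial, they \emph{condition} on all but one zero. Writing $L_n^{k}(z)=\tfrac{1}{z-z_j}L_n^{k-1,j}(z)+L_n^{k,j}(z)$, the paper observes that, conditionally, $\tfrac{1}{z-z_j}$ has an anticoncentrated (continuous, up to the atomic part of $\mu$) law, so it is unlikely to cancel the remaining terms to exponential precision; a decomposition $\mu=\mu_1+\mu_2$ into atomic and non-atomic parts handles the general $\mu$. This anticoncentration mechanism is what supplies a lower bound on $|L_n^k|$ even where the Cauchy transform vanishes identically, and it is the content of \cite[Lemma 2.6]{Ka15} (base case $k=1$) and the induction in Section 4.1.1 of the paper. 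Without some replacement for it, your proposal does not prove the theorem for any measure $\mu$ whose support disconnects the plane with $C_\mu$ vanishing on a bounded component.
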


We show that the same result holds when the zeros of polynomials are independently sampled, from two deterministic sequences (triangular arrays), see Theorem \ref{Ber zero k-th}  (Theorem \ref{Ber zero array}) below. We remark that Theorem \ref{Ber zero k-th} can be utilized to verify this phenomenon in the case that the deterministic zeros are perturbed independently at random. For example, choose $a_i=z_i+\sigma_iX_i$ and $b_i=z_i-\sigma_iX_i$, where $\{z_i\}_{i\geq1}$ is a $\mu$-distributed deterministic sequence, $X_i$'s are  i.i.d. symmetric random variables and $\{\sigma_i\}_{i\geq1}$ is a sequence of positive numbers converging to $0$. This is stated as Corollary \ref{perturb zer}. 
These results in the case of zeros of the first derivative were shown in \cite{Re16b}.

\begin{thm} \label{Ber zero k-th}
	Let $\{ a_i \}_{i \ge 1}$ and $\{ b_i \}_{i \ge 1}$ be two $\mu$-distributed, log-Ces\'{a}ro bounded sequences of complex numbers. Suppose that $a_i \neq b_i$ for infinitely many $i$. Let $\{\xi_i\}_{i \ge 1}$ be a sequence of independent random variables such that $\xi_i=a_i$ or $\xi_i=b_i$ with equal probability. For each $n \in \mathbb N$, let 	
	$$
	\displaystyle P_n(z):=(z-\xi_1) \cdots(z-\xi_n).
	$$
	Then $\mathcal{M}(P_n) \rightarrow \mu$ almost surely and $\mathcal{M}(P^{(k)}_n) \rightarrow \mu$ in probability for any $k \in \mathbb N$.
\end{thm}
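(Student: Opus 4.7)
The plan is to treat the two assertions separately: the a.s.\ convergence $\mathcal{M}(P_n)\to\mu$ is a routine strong-law argument, while $\mathcal{M}(P_n^{(k)})\to\mu$ in probability requires logarithmic potential theory. For the a.s.\ statement, I would fix a countable family of bounded continuous test functions determining weak convergence on $\C$, and for each such $f$ decompose
\begin{equation}
\frac{1}{n}\sum_{i=1}^n f(\xi_i) \;=\; \frac{1}{2n}\sum_{i=1}^n \bigl(f(a_i)+f(b_i)\bigr) \;+\; \frac{1}{n}\sum_{i=1}^n Y_i,
\end{equation}
where $Y_i := f(\xi_i)-\tfrac12(f(a_i)+f(b_i))$ are independent, bounded, mean-zero random variables. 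The deterministic average converges to $\int f\,d\mu$ by $\mu$-distributedness of both sequences, and $\tfrac{1}{n}\sum Y_i\to 0$ a.s.\ by Kolmogorov's strong law; a countable union bound yields the a.s.\ weak convergence.

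For the convergence in probability of $\mathcal{M}(P_n^{(k)})$, I would work with the logarithmic potential $U^\mu(z):=\int\log|z-w|\,d\mu(w)$. Since the distributional Laplacian of $\tfrac{1}{n}\log|P_n^{(k)}|$ recovers the zero-counting measure of $P_n^{(k)}$ up to the factor $2\pi(n-k)/n\to 2\pi$, it suffices to show $\tfrac{1}{n}\log|P_n^{(k)}|\to U^\mu$ in $L^1_{loc}(\C)$ in probability: vague convergence of the Laplacians then follows, and because $\mu$ is a probability measure and each $\mathcal{M}(P_n^{(k)})$ is too, vague convergence automatically upgrades to weak convergence. I would establish the $L^1_{loc}$ statement in two steps. \emph{Step (a)}: $\tfrac{1}{n}\log|P_n|\to U^\mu$ in $L^1_{loc}$ in probability. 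For fixed $z$, $\tfrac{1}{n}\log|P_n(z)|=\tfrac{1}{n}\sum\log|z-\xi_i|$ is a sum of independent terms with mean $\tfrac{1}{2n}\sum(\log|z-a_i|+\log|z-b_i|)\to U^\mu(z)$ (for $\mu$-a.e.\ $z$, by $\mu$-distributedness of the averaged measure $\tfrac{1}{2n}\sum(\delta_{a_i}+\delta_{b_i})$) and variance $O(1/n)$; Chebyshev then delivers pointwise convergence in probability, upgraded to $L^1_{loc}$ via Vitali and a uniform $L^{1+\delta}$ bound on compact sets. \emph{Step (b)}: $\tfrac{1}{n}\log|P_n^{(k)}/P_n|\to 0$ in $L^1_{loc}$ in probability. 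The key identity is
\begin{equation}
\frac{P_n^{(k)}(z)}{P_n(z)} \;=\; k!\,e_k\!\Bigl(\tfrac{1}{z-\xi_1},\ldots,\tfrac{1}{z-\xi_n}\Bigr),
\end{equation}
where $e_k$ is the $k$-th elementary symmetric polynomial. Since $e_k$ has $\binom{n}{k}=O(n^k)$ terms, $\log|P_n^{(k)}/P_n|=O(k\log n)$ at typical $z$, plus contributions from poles at the $\xi_i$ that are locally integrable.

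The principal obstacle is the $L^1_{loc}$ control near $\operatorname{supp}(\mu)$, where many near-singularities $\log|z-\xi_i|$ and $|z-\xi_i|^{-k}$ may in principle pile up. I would address this via the Jensen/Fubini estimate
\begin{equation}
\mathbb{E}\int_K \Bigl|\tfrac{1}{n}\log|P_n(z)|\Bigr|^{1+\delta}\,dA(z) \;\le\; \tfrac{1}{n}\sum_{i=1}^n \max_{w\in\{a_i,b_i\}}\int_K \bigl|\log|z-w|\bigr|^{1+\delta}\,dA(z),
\end{equation}
where the inner integral grows only like $(1+\log_+|w|)^{1+\delta}$ in $w$, and the outer sum is uniformly bounded in $n$ using the log-Cesaro hypothesis to absorb those indices with $|a_i|$ or $|b_i|$ large. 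An analogous template for Step (b) exploits $\int_K|z-w|^{-\alpha}\,dA<\infty$ for $\alpha<2$ to handle the reciprocal singularities coming from $e_k$. Putting these together, $\tfrac{1}{n}\log|P_n^{(k)}|\to U^\mu$ in $L^1_{loc}$ in probability, which via the potential-theoretic equivalence yields $\mathcal{M}(P_n^{(k)})\to\mu$ in probability, completing the proof.
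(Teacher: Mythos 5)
The almost sure convergence $\mathcal{M}(P_n)\to\mu$ and your reduction to the log-potential convergence $\tfrac{1}{n}\log|P_n^{(k)}|\to U^\mu$ in $L^1_{loc}$ are both fine and match the standard setup, which the paper realizes through the quantity $L_n^k(z)=\tfrac{1}{k!}P_n^{(k)}(z)/P_n(z)$ and the Tao--Vu criterion. Step (a) and the $L^{1+\delta}$ tightness estimates are essentially the paper's ``upper bound and tightness'' lemmas. Where the proposal breaks down is Step (b).

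You argue that $\tfrac{1}{n}\log|P_n^{(k)}/P_n|\to 0$ in $L^1_{loc}$ by observing that $e_k$ has $\binom{n}{k}$ terms, so $\log|P_n^{(k)}/P_n|=O(k\log n)$ ``at typical $z$'', and by controlling the local integrability of the pole singularities $|z-\xi_i|^{-\alpha}$. Both of these control only $\log_+|L_n^k(z)|$, i.e.\ they rule out $L_n^k$ being too large. They do not rule out $L_n^k(z)$ being \emph{exponentially small} due to cancellations among the $\binom{n}{k}$ terms of $e_k$, which is precisely the direction that fails for deterministic polynomials: for $P_n(z)=z^n-1$ one has $L_n^1(z)=nz^{n-1}/(z^n-1)$, so $\tfrac{1}{n}\log|L_n^1(z)|\to\log|z|<0$ for $|z|<1$, even though the zero-counting measure of $P_n$ converges. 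The whole point of the theorem is that randomizing each zero between two sequences defeats this kind of cancellation, and the proof must use the independence of the $\xi_i$ together with $a_i\ne b_i$ for infinitely many $i$ to produce an anti-concentration bound of the form $\P(|L_n^k(z)|<e^{-n\eps})\to 0$ for a.e.\ $z$. The paper does exactly this (their conditions \eqref{A12}, \eqref{A3}), with the $k=1$ case imported from prior work and the case of general $k$ obtained via a careful induction that conditions on the first $\ell$ coordinates of $\xi$ and reduces $L_n^k$ to a lower-order $L_{n'}^{k-m_1}$ of a modified sequence. Your proposal never engages with this lower-tail event, so as written the argument has a genuine gap: the $L^1_{loc}$ convergence in Step (b) is asserted, not proved, on the side where all the difficulty lives.

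A secondary, smaller issue is that the variance of $\log|z-\xi_i|$ is $\tfrac14(\log|z-a_i|-\log|z-b_i|)^2$, which need not be uniformly bounded over $i$ for a fixed $z$, so the ``variance $O(1/n)$'' claim in Step (a) needs the log-Ces\'aro hypothesis and an a.e.\ $z$ qualification to justify; this is routine but should not be waved away.
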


\begin{cor} \label{perturb zer}
	Let $\{ z_i \}_{\i \ge 1}$ be a $\mu$-distributed log-Ces\'{a}ro bounded sequences of complex numbers. For a non-zero i.i.d. sequence of symmetric random variables $\{ X_i \}$ satisfying $\E[|X_1|] < \infty$, let 
	$$
	\displaystyle P_n(z):= \prod_{i=1}^{n} (z-z_i+\sigma_i X_i), 
	$$
	where $\{ \sigma_i \}_{i\ge1}$ is a decreasing sequence of real number satisfying $\lim_i \sigma_i=0$. Then $\mathcal{M}(P_n) \rightarrow \mu$ almost surely and $\mathcal{M}(P^{(k)}_n) \rightarrow \mu$ in probability for any $k \in \mathbb N$.  
\end{cor}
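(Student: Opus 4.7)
The plan is to reduce the corollary to Theorem~\ref{Ber zero k-th} by exploiting the symmetry of the $X_i$. Since the distribution of $\mathcal{M}(P_n)$ and of $\mathcal{M}(P_n^{(k)})$ depends only on the joint law of $\{X_i\}$, and since each $X_i$ is non-zero and symmetric, we may assume without loss of generality that $X_i = \eps_i Y_i$, where $\{\eps_i\}$ are i.i.d.\ uniform on $\{-1,+1\}$, independent of the sequence $\{Y_i\}$, with $Y_i \stackrel{d}{=} X_i$. Conditioning on $\mathcal{G} := \sigma(Y_i : i \ge 1)$ and fixing a realization, define
\[
a_i := z_i - \sigma_i Y_i, \qquad b_i := z_i + \sigma_i Y_i.
\]
Then the zeros $z_i - \sigma_i X_i = z_i - \sigma_i \eps_i Y_i$ of $P_n$ equal $\xi_i \in \{a_i, b_i\}$, with independent Bernoulli choices of probability $1/2$, so $P_n$ fits the framework of Theorem~\ref{Ber zero k-th}, provided its hypotheses hold almost surely on $\mathcal{G}$.

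The main technical work is to verify three hypotheses almost surely: (i) $a_i \neq b_i$ for infinitely many $i$; (ii) $\{a_i\}$ and $\{b_i\}$ are $\mu$-distributed; (iii) both sequences are log-Ces\`aro bounded. Item~(i) is immediate from $\sigma_i > 0$ and $Y_i \neq 0$ a.s. For (ii), test against a countable determining family of bounded Lipschitz functions: for any such $f$ with constant $L$,
\[
\biggl|\frac{1}{n}\sum_{i=1}^n f(a_i) - \frac{1}{n}\sum_{i=1}^n f(z_i)\biggr| \le \frac{L}{n}\sum_{i=1}^n \sigma_i |Y_i|.
\]
It suffices therefore to show the Toeplitz-type estimate $\frac{1}{n}\sum_{i=1}^n \sigma_i |Y_i| \to 0$ a.s.: given $\eps > 0$, pick $N_\eps$ with $\sigma_i < \eps$ for $i \ge N_\eps$; the SLLN gives $\frac{1}{n}\sum_{i=1}^n |Y_i| \to \E|X_1| < \infty$ a.s., so the tail is bounded by $\eps\,\E|X_1|$ in the limit, while the head (a fixed-length sum of a.s.\ finite values, divided by $n$) is $o(1)$. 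Combined with the $\mu$-distribution of $\{z_i\}$, this yields (ii). For (iii), the estimate $\log_+|a_i| \le \log 2 + \log_+|z_i| + \log_+(\sigma_1 |Y_i|)$, together with $\E\log_+|X_1| \le \E|X_1| < \infty$ and SLLN for $\{\log_+|Y_i|\}$, transfers the log-Ces\`aro bound from $\{z_i\}$ to $\{a_i\}$ and similarly to $\{b_i\}$.

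Applying Theorem~\ref{Ber zero k-th} conditionally on $\mathcal{G}$ gives $\mathcal{M}(P_n) \to \mu$ almost surely and $\mathcal{M}(P_n^{(k)}) \to \mu$ in probability, conditionally on $\mathcal{G}$, for almost every realization. The almost-sure statement lifts directly to the unconditional one; the conditional convergence in probability lifts via dominated convergence applied to $\P(d(\mathcal{M}(P_n^{(k)}), \mu) > \eta \mid \mathcal{G}) \to 0$. The chief, but modest, obstacle is the Toeplitz-type estimate above, which cleanly packages the interplay between the decay $\sigma_i \to 0$ and the integrability $\E|X_1| < \infty$; the rest is bookkeeping to place the setup inside the hypotheses of Theorem~\ref{Ber zero k-th}.
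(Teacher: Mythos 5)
Your reduction to Theorem~\ref{Ber zero k-th} by conditioning on the magnitudes (decomposing $X_i = \eps_i Y_i$ with $\eps_i$ Rademacher) is precisely the route the paper indicates in the remark preceding the corollary, where the authors set $a_i = z_i + \sigma_i X_i$, $b_i = z_i - \sigma_i X_i$ and invoke the symmetry of $X_i$. The technical details you supply --- the estimate $\tfrac{1}{n}\sum_{i\le n}\sigma_i |Y_i| \to 0$ a.s.\ (using $\sigma_i \to 0$ and the SLLN under $\E|X_1|<\infty$) to transfer $\mu$-distribution and log-Ces\`aro boundedness, and the dominated-convergence lift of conditional convergence in probability --- correctly fill in what the paper leaves implicit, so this is essentially the same proof.
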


\begin{thm}\label{Ber zero array}
	Let $\{ a_{i,j} \}_{i \ge 1 ; 1 \le j \le i}$ and $\{ b_{i,j} \}_{i \ge 1 ; 1 \le j \le i}$ be two $\mu$-distributed and log-Ces\'{a}ro bounded triangular arrays of complex numbers satisfying $\sum_{i=1}^n \log_{+} \frac{1}{|a_{n,i}-b_{n,i}|}=o(n^2).$ For each $i \ge 1$, let $\{\xi_{i,j}\}_{j \le i}$ be a sequence of independent random variables such that $\xi_{i,j}=a_{i,j}$ or $\xi_{i,j}=b_{i,j}$ with equal probability. For each $n \in \mathbb N$, let 
	$$
	\displaystyle P_n(z):=(z-\xi_{n,1}) \cdots(z-\xi_{n,n}).
	$$
	Then $\mathcal{M}(P_n) \rightarrow \mu$ almost surely and $\mathcal{M}(P^{(k)}_n) \rightarrow \mu$ in probability for any $k \in \mathbb N$.
\end{thm}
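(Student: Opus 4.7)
The plan is to follow the same scheme as in Theorem~\ref{Ber zero k-th} — first establish $\mathcal{M}(P_n)\to\mu$ almost surely, then deduce the derivative convergence via logarithmic potential theory — but to replace every appeal to a Kolmogorov-type strong law of large numbers by a Hoeffding concentration estimate combined with Borel--Cantelli. This is forced by the fact that the array $\{\xi_{n,j}\}_{j\le n}$ genuinely changes with $n$, and it is precisely in the resulting Hoeffding bounds that the new hypothesis $\sum_{i=1}^n \log_+\frac{1}{|a_{n,i}-b_{n,i}|}=o(n^2)$ plays its role.

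\textbf{Step 1: the zeros.} For a countable convergence-determining family $\{f_m\}\subset C_c(\C)$, the variables $f_m(\xi_{n,j})-\E f_m(\xi_{n,j})$ are independent, mean zero, and uniformly bounded, so Hoeffding's inequality gives
\begin{equation}
\P\!\left(\Bigl|\tfrac{1}{n}\sum_{j=1}^n f_m(\xi_{n,j})-\tfrac{1}{2n}\sum_{j=1}^n\bigl(f_m(a_{n,j})+f_m(b_{n,j})\bigr)\Bigr|>\varepsilon\right)\le 2e^{-c_m n\varepsilon^2},
\end{equation}
which is summable in $n$; Borel--Cantelli together with the $\mu$-distribution of both arrays yields $\int f_m\,d\mathcal{M}(P_n)\to\int f_m\,d\mu$ almost surely for each $m$, and hence $\mathcal{M}(P_n)\to\mu$ almost surely.

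\textbf{Step 2: the higher derivatives.} By the classical logarithmic-potential criterion (the distributional Laplacian of $\log|P_n^{(k)}|$ is $2\pi$ times the counting measure of its zeros), it is enough to show
\begin{equation}
\tfrac{1}{n}\log|P_n^{(k)}(z)|\;\longrightarrow\;\int\log|z-w|\,d\mu(w)\quad\text{in } L^1_{loc}(\C) \text{ in probability.}
\end{equation}
Setting $u_n(z):=\tfrac{1}{n}\log|P_n(z)|$ and exploiting the identity
\begin{equation}
\frac{P_n^{(k)}(z)}{P_n(z)}=k!\,e_k\!\left(\tfrac{1}{z-\xi_{n,1}},\ldots,\tfrac{1}{z-\xi_{n,n}}\right),
\end{equation}
the crude upper bound $|e_k(x_1,\ldots,x_n)|\le \binom{n}{k}\max_i|x_i|^k$ matched against the leading-coefficient lower bound for $P_n^{(k)}$ reduces the problem, as in \cite{Ka15, Re16b}, to showing that $u_n\to -U_\mu$ in $L^1_{loc}$ in probability, where $U_\mu(z)=-\int\log|z-w|\,d\mu(w)$.

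\textbf{Main obstacle.} The delicate point is this last convergence, because $\log|z-\xi_{n,j}|$ is unbounded. I would apply Hoeffding to the bounded differences, obtaining
\begin{equation}
\P\!\bigl(|u_n(z)-\E u_n(z)|>\varepsilon\bigr)\le 2\exp\!\left(-\frac{c\,\varepsilon^2 n^2}{\sum_{j=1}^n r_{n,j}(z)^2}\right),\qquad r_{n,j}(z):=\bigl|\log|z-a_{n,j}|-\log|z-b_{n,j}|\bigr|,
\end{equation}
and controlling the denominator for \emph{most} $z$ is exactly where the assumption $\sum_j \log_+\frac{1}{|a_{n,j}-b_{n,j}|}=o(n^2)$ enters: integrating $r_{n,j}(z)^2$ over a compact disk bounds each term by a universal constant plus a multiple involving $\log_+\tfrac{1}{|a_{n,j}-b_{n,j}|}$, so that Fubini together with the assumed smallness of $\sum_j \log_+\tfrac{1}{|a_{n,j}-b_{n,j}|}$ forces the Hoeffding exponent to grow fast enough in $n$ for Borel--Cantelli to conclude. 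The matching convergence of the deterministic mean $\E u_n(z)\to -U_\mu(z)$ at a.e.\ $z$ is then a standard consequence of $\mu$-distribution and log-Cesàro boundedness via a truncation of $\log|z-\cdot|$ near its singularity and at infinity.
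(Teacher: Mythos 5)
Your Step 1 is fine, but Step 2 contains a genuine gap exactly at the point where all the difficulty lives. The upper bound $|e_k(x_1,\dots,x_n)|\le\binom{n}{k}\max_i|x_i|^k$ together with convergence of $u_n=\tfrac1n\log|P_n|$ only gives you $\limsup_n\tfrac1n\log|P_n^{(k)}(z)|\le -U_\mu(z)$ for a.e.\ $z$. To get convergence of $\mathcal M(P_n^{(k)})$ you also need the matching \emph{lower} bound, i.e.\ you need to rule out that $L_n^k(z)=\tfrac{1}{k!}P_n^{(k)}(z)/P_n(z)$ is exponentially small at a fixed $z$ with non-negligible probability; no ``leading-coefficient lower bound for $P_n^{(k)}$'' delivers this, since the leading coefficient only controls Jensen-type circle averages of $\log|P_n^{(k)}|$, not pointwise values, and $P_n^{(k)}$ can perfectly well have a zero very near the fixed $z$. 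This anti-concentration estimate for $L_n^k$ (the analogue of Lemma~2.6 in \cite{Ka15}) is precisely the content the paper proves by a combinatorial induction on $k$, pairing sample points of $\mathcal N_n$ that differ in finitely many coordinates and using the polynomial factorization $\sum_p\alpha_pw^{m-p}=\prod_j(w+\tfrac{1}{z-\xi_{n,i_j}})$ to pass from $L_n^{k}$ to a shorter $L_{n-m+q}^{k-m_1}$; your proposal simply never addresses it.

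Moreover, the role you assign to the hypothesis $\sum_i\log_+\tfrac{1}{|a_{n,i}-b_{n,i}|}=o(n^2)$ is backwards. In your Hoeffding denominator $\sum_j r_{n,j}(z)^2$ with $r_{n,j}(z)=\bigl|\log|z-a_{n,j}|-\log|z-b_{n,j}|\bigr|$, the ranges $r_{n,j}$ are \emph{small} when $a_{n,j}$ and $b_{n,j}$ are close and \emph{large} when they are far apart; the hypothesis, which says the pairs are not too close, therefore pushes the Hoeffding exponent in the wrong direction, and in any case $\int_{\D_R}r_{n,j}(z)^2\,dm(z)$ is bounded by a universal constant with no dependence on $\log_+\tfrac{1}{|a-b|}$. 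In the paper the condition is used only in the $k=1$ anti-concentration step (it guarantees that flipping $\xi_{n,j}$ from $a_{n,j}$ to $b_{n,j}$ moves $L_n^1(z)$ by a non-negligible amount for enough indices $j$), which is a form of anti-concentration, not a concentration estimate on $u_n$. So the key technical lemma is missing from your plan, and the one place you invoke the extra hypothesis does not do the job you ask of it.
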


Our next result deals with a question appeared in \cite[Conjecture 2.14]{Re16b} which states that the same phenomenon will happen when a zero is removed uniformly at random from a deterministic sequence of polynomials. We resolve this conjecture positively when the empirical measure of zeros of the polynomials converge to a non-atomic probability measure.

\begin{thm} \label{remove zero}
	Suppose $\{z_i\}_{i \ge 0}$ is a $\mu$-distributed, log-Ces\'{a}ro bounded sequence of complex number, where $\mu$ is a non-atomic probability measure on $\C$. For each $n \in \mathbb N$, let
	$$\displaystyle P_n(z)=\frac{(z-z_0)(z-z_1)\dots (z-z_n)}{z-z_{s_n}}, $$ 
	where $s_n$ is a random number distributed uniformly on the set $\{0,1,\cdots,n\}$. Then $\mathcal{M}(P_n) \rightarrow \mu$ almost surely and $\mathcal{M}(P'_n) \rightarrow \mu$ in probability.
\end{thm}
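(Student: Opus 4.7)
The plan is to treat the two convergences separately. For $\mathcal M(P_n)\to\mu$ almost surely, observe that $\mathcal M(P_n)=\tfrac{1}{n}\sum_{i\ne s_n}\delta_{z_i}$ differs from the deterministic empirical measure $\tfrac{1}{n+1}\sum_{i=0}^{n}\delta_{z_i}$ in total variation by $O(1/n)$ regardless of the value of $s_n$. Since the latter converges weakly to $\mu$ by hypothesis, so does $\mathcal M(P_n)$, pointwise in the randomness.

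For the critical points, I would pass to logarithmic potentials: by the continuity of the distributional Laplacian on $L^1_{\mathrm{loc}}(\C)$, the assertion $\mathcal M(P_n')\to\mu$ in probability reduces to
\[
\frac{1}{n-1}\log|P_n'(z)| \;\longrightarrow\; \int_\C \log|z-w|\,d\mu(w) \quad\text{in }L^1_{\mathrm{loc}}(\C)\text{ in probability.}
\]
Decomposing $\log|P_n'(z)| = \log|P_n(z)| + \log|R_n(z)|$, where $R_n(z):=P_n'(z)/P_n(z)=\sum_{i\ne s_n}(z-z_i)^{-1}$, the almost sure convergence in part one together with the log-Ces\`aro hypothesis yields $\tfrac{1}{n}\log|P_n(z)|\to\int\log|z-w|\,d\mu(w)$ in $L^1_{\mathrm{loc}}(\C)$ almost surely by a standard truncation argument. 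The task therefore reduces to the claim
\[
\frac{1}{n}\log|R_n(z)|\;\longrightarrow\; 0 \quad\text{in }L^1_{\mathrm{loc}}(\C)\text{ in probability.} \qquad (\ast)
\]

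For $(\ast)$ I would write $R_n(z)=T_n(z)-\tfrac{1}{z-z_{s_n}}$ with $T_n(z):=\sum_{i=0}^{n}(z-z_i)^{-1}$, and use the algebraic identity (valid when $T_n(z)\ne 0$)
\[
R_n(z)\;=\;T_n(z)\cdot\frac{z_{s_n}-w_n(z)}{z-z_{s_n}},\qquad w_n(z):=z-\frac{1}{T_n(z)}.
\]
The upper estimate $\int_K\log^+|R_n|\,dm=o(n)$ is routine from log-Ces\`aro boundedness and local integrability of $\log|z|$. The delicate step is the matching lower estimate $\int_K\log^-|R_n|\,dm=o_{\mathbf P}(n)$ on each compact $K$. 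The zeros of $R_n$ are the $n-1$ critical points of $P_n$, which can a priori cluster anywhere in the convex hull of the $z_i$. Here the randomness of $s_n$ enters decisively: for any $\varepsilon>0$,
\[
\mathbf P\bigl(|z_{s_n}-w_n(z)|<\varepsilon\bigr)=\frac{\#\{j:|z_j-w_n(z)|<\varepsilon\}}{n+1}\longrightarrow\mu\bigl(\overline{B(w_n(z),\varepsilon)}\bigr),
\]
which vanishes uniformly as $\varepsilon\to 0$ precisely because $\mu$ is non-atomic. A Fubini interchange of $\int_K dm(z)$ with the expectation over $s_n$, combined with the integrability of $\log|\cdot|$ at the origin and the corresponding $L^1_{\mathrm{loc}}$-bounds for $\log|T_n(z)|$ and $\log|z-z_{s_n}|$, then yields $(\ast)$.

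The main obstacle is precisely this lower bound on $|R_n(z)|$: the ``target'' $w_n(z)$ depends on the whole deterministic sequence through $T_n$, and the parametrization degenerates on the set where $T_n(z)=0$, which must be handled by a separate approximation argument. It is the non-atomicity of $\mu$—the hypothesis that rules out the trivial obstruction in the conjecture of \cite{Re16b}—that makes the above probabilistic estimate uniform in $z$ over compacts, and hence usable after Fubini to close the $L^1_{\mathrm{loc}}$ bound.
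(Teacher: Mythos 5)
Your reduction to controlling $\tfrac{1}{n}\log|P_n'/P_n|$ is the same potential-theoretic framework the paper uses, and your probabilistic intuition (that uniform choice of $s_n$ prevents the removed zero from systematically landing near the critical-point locus) is the right one. But the way you try to close the argument has a genuine gap, which you in fact half-acknowledge in your last paragraph.

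The identity $R_n(z)=T_n(z)\,\frac{z_{s_n}-w_n(z)}{z-z_{s_n}}$ is exact, but after you split $\log^-|R_n|$ sub-additively the term $\tfrac{1}{n}\int_K\log^-|T_n(z)|\,dm(z)$ is \emph{not} $o(1)$ in general --- it is only $O(1)$. Indeed $T_n$ is the logarithmic derivative of the full deterministic polynomial $\prod_{j=0}^n(z-z_j)$, and its zeros (the deterministic critical points) can cluster in a way that makes this integral stay bounded away from zero; this is precisely the obstruction in the classical $z^n-1$ example, where $\tfrac{1}{n}\log^-|T_n(z)|\sim\log(1/|z|)$ on a fixed disk. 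The quantity $\log|T_n|+\log|z_{s_n}-w_n(z)|$ is small only because these two terms cancel (when $T_n(z)\to 0$, $|w_n(z)|\sim 1/|T_n(z)|\to\infty$), and the Fubini-plus-separate-bounds sketch discards exactly this cancellation. Your probabilistic estimate $\P(|z_{s_n}-w_n(z)|<\varepsilon)\to 0$ is at a fixed scale $\varepsilon$, but what is actually needed is an estimate at the $z$-dependent scale $\varepsilon_n(z)\asymp e^{-n\varepsilon}/|T_n(z)|$, which becomes uncontrollable exactly on the set where $T_n$ degenerates. Writing ``a separate approximation argument'' handles this does not supply the argument --- it \emph{is} the whole difficulty.

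The paper avoids $T_n$ entirely and never attempts an $o(n)$ bound on $\int_K\log^-|L^1_n|\,dm$. Instead it feeds \eqref{A12} (pointwise) and \eqref{A3} (an $O(1)$ tightness bound, which is easy) into the Tao--Vu lemma (Lemma~\ref{TVK-lem}). For the pointwise statement it argues by contradiction: if $\P(|L^1_n(z)|<e^{-n\varepsilon})\ge 2\delta>0$ along a subsequence, then since $L^1_n(z)$ is a deterministic function of the uniformly chosen index $s_n$, at least $2\delta(n+1)$ of the values $1/(z-z_i)$ must lie within $e^{-n\varepsilon}$ of the single number $T_n(z)$, forcing (via injectivity of $w\mapsto 1/(z-w)$ and log-Ces\`aro boundedness) at least $2\delta(n+1)$ of the $z_i$'s into a ball of radius $\sim e^{-n\varepsilon}$. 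Passing to the weak limit, this gives $\mu$ an atom of mass $\ge 2\delta$, a contradiction. This is where non-atomicity is actually used, and it works pointwise in $z$ with no reference to where the zeros of $T_n$ are. Re-routing through Lemma~\ref{TVK-lem} rather than direct $L^1_{\mathrm{loc}}$ convergence is not cosmetic: it is what downgrades the requirement on the negative part from $o(n)$ to merely bounded, and that downgrade is what your sketch is missing.
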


We now consider sequence of polynomials whose zeros are deterministic except for finite ones. Further we assume that the zero set $\{z_{n,i}\}_{n \ge 1, i \le n}$ is $\mu$-distributed triangular array of complex numbers. For such polynomials, we show that the empirical measure of zeros of higher derivatives (up-to the number of random zeros) converge to the same limiting measure as that of the zeros of these polynomials. We remark that it can be interpreted as a random perturbation of polynomials where the perturbed polynomial is obtained by multiplying with a random factor, which strengthens Theorem 2.1 in \cite{orourke_williams}. 

Before stating our results we introduce the assumptions on the random zeros of our polynomials. Consider a random vector $(X_{n,1}, \dots, X_{n,k})$, where $X_{n,j}$'s are complex-valued random variables distributed according to the joint probability density function $\nu_n(w_1, \dots, w_k)$. Here, we assume that for any $k$, there exist positive constants $C_1,C_2>0$ and $a \in [0,1)$, which does not depend on $n,i$ such that $\nu_n$ satisfies the following conditions. 
\begin{equation}\label{c:add1} 
\int_{\C^k} \sum_{i=1}^k\log_{+}|w_i| \nu_n(w_1,w_2,\dots w_k)dw_1 dw_2 \dots dw_k \le C_1 <\infty;
\end{equation} 
\begin{equation}\label{c:add2}
\frac{\sup_{w_i \in \C} \nu_n(w_1, \dots, w_k)}{ \int_{\C} \nu_n(w_1, \dots, w_k) dw_i } \le C_2 \exp\big( n^a \big) \quad \mbox{for all} \quad w_1,\dots, w_{i-1},w_{i+1}, \dots, w_k \in \C;
\end{equation} 
\begin{equation}\label{c:add3}
\lim_{ r \to \infty}  \limsup_{n \to \infty}\P \left(  \max_{1 \le i \le k}|X_{n,i}| \ge r \right) = 0.
\end{equation}
Notice that \eqref{c:add1} ensures that $X_{n,1}, \dots, X_{n,k}$ have finite $\log_+$-moments. Note also that \eqref{c:add2} has the following probabilistic interpretation: for any given complex numbers $$w_1, \dots, w_{i-1}, w_{i+1}, \dots w_k, $$ the conditional density of $w_i$ is sub-exponentially bounded.

\begin{thm} \label{t: genadd}
	For fixed $k \in \N$ and each $n\in \mathbb N$, suppose that sequence of complex-valued random vector $(X_{n,1}, \dots, X_{n,k})$ with joint probability density $\nu_n(w_1, \dots, w_k)$ satisfies \eqref{c:add1}, \eqref{c:add2} and \eqref{c:add3}. Let
	\begin{equation}
	\displaystyle P_n(z):=(z-z_{n,1})\cdots(z-z_{n,n})(z-X_{n,1}) \cdots (z-X_{n,k}). 
	\end{equation}
	Then  $\mathcal{M}(P_n) \rightarrow \mu$ and $\mathcal{M}(P^{(\ell)}_n) \rightarrow \mu$ in probability for any $1\le \ell \le k$.
\end{thm}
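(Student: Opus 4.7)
The plan is to combine the decomposition $P_n = Q_n R_n$ with logarithmic potential theory, in the spirit of \cite{Ka15,Re16b}, where $Q_n(z)=\prod_{i=1}^n(z-z_{n,i})$ carries the deterministic zeros and $R_n(z)=\prod_{j=1}^k(z-X_{n,j})$ the finitely many random ones. Set $N=n+k$. The assertion $\mathcal{M}(P_n)\to\mu$ in probability is immediate from
$$\mathcal{M}(P_n)=\frac{n}{N}\mathcal{M}(Q_n)+\frac{1}{N}\sum_{j=1}^{k}\delta_{X_{n,j}},$$
because $\mathcal{M}(Q_n)\to\mu$ by hypothesis while the second summand is tight by \eqref{c:add3} with vanishing total mass $k/N\to 0$.

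For the derivatives, I would use the standard criterion that $\mathcal{M}(P_n^{(\ell)})\to\mu$ in probability will follow once one establishes, in probability,
$$\frac{1}{N-\ell}\log|P_n^{(\ell)}(z)|\longrightarrow\int_{\C}\log|z-w|\,d\mu(w)\quad\text{in }L^1_{\mathrm{loc}}(\C).$$
Expanding by Leibniz, $P_n^{(\ell)}=\sum_{j=0}^{\ell}\binom{\ell}{j}Q_n^{(j)}R_n^{(\ell-j)}$, and combining with the identity $Q_n^{(j)}/Q_n=j!\,e_j\bigl(\tfrac{1}{z-z_{n,1}},\ldots,\tfrac{1}{z-z_{n,n}}\bigr)$ together with the Newton-identity estimate $e_j\sim p_1^j/j!\sim (n\,C_\mu(z))^j/j!$ at a generic $z$ (where $C_\mu$ denotes the Cauchy transform of $\mu$), one sees that the only Leibniz summand whose magnitude grows like $n^{\ell}$ is the extreme one $j=\ell$, namely $Q_n^{(\ell)}R_n$. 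Factoring it out reduces the task to controlling $\tfrac{1}{N-\ell}\log|Q_n^{(\ell)}(z)R_n(z)|$; since $R_n$ has fixed degree $k$ with tight roots, $\tfrac{1}{N-\ell}\log|R_n(z)|\to 0$ in $L^1_{\mathrm{loc}}$, and one is left with the purely deterministic statement $\tfrac{1}{N-\ell}\log|Q_n^{(\ell)}(z)|\to\int\log|z-w|\,d\mu(w)$, which is obtained by applying the same $e_\ell\sim p_1^\ell/\ell!$ estimate to $Q_n$ alone and combining with the potential-theoretic convergence $\tfrac{1}{n}\log|Q_n(z)|\to\int\log|z-w|\,d\mu(w)$.

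The hard part is to justify discarding the subdominant Leibniz terms \emph{in $L^1_{\mathrm{loc}}$}, which amounts to establishing a matching lower bound for $|P_n^{(\ell)}(z)|$ on large sets. The upper bound is routine given the $\log_{+}$-moment condition \eqref{c:add1}. For the lower bound, one must rule out $|P_n^{(\ell)}(z)|$ being abnormally small on a set of non-negligible measure: this is where the sub-exponential conditional density bound \eqref{c:add2} enters, supplying anti-concentration for the polynomial $X_{n,j}\mapsto P_n^{(\ell)}(z)$ through a one-variable-at-a-time small-ball argument. The restriction $a<1$ in \eqref{c:add2} is what makes the estimate $C_2 e^{n^a}$ compatible with exponentially small scales in $n$ arising in the potential-theoretic comparison, while the tightness \eqref{c:add3} restricts the analysis to a large disk and is used to send $r\to\infty$ at the end. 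Once the pointwise comparison is promoted to $L^1_{\mathrm{loc}}$-convergence by these estimates, the unicity of logarithmic potentials yields $\mathcal{M}(P_n^{(\ell)})\to\mu$ in probability, completing the proof.
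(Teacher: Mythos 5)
Your core idea---exploiting the sub-exponential conditional-density bound \eqref{c:add2} to get anti-concentration of $P_n^{(\ell)}(z)$ one random variable at a time, with \eqref{c:add3} restricting to a large disk and \eqref{c:add1} controlling $\log_+$-moments---is the right one, and it is exactly the mechanism behind the paper's Lemmas~\ref{l:add1} and~\ref{l:add2}. However, your overall framing takes a detour the paper avoids, and the detour has a genuine gap.

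The paper never needs to identify the potential limit of $P_n^{(\ell)}$. It works with the logarithmic-derivative ratio $L_n^\ell(z)=\frac{1}{\ell!}\,P_n^{(\ell)}(z)/P_n(z)$, shows $\frac{1}{n}\log|L_n^\ell(z)|\to 0$ for a.e.\ $z$ in probability together with a tightness bound (Section~3), and invokes the Tao--Vu lemma; since Green's formula turns $\frac{1}{n}\log|L_n^\ell|$ into the difference of the rescaled zero-counting measures of $P_n$ and $P_n^{(\ell)}$, this gives the conclusion directly. Your route instead tries to first identify the limit of $\frac{1}{N-\ell}\log|P_n^{(\ell)}|$ via a Leibniz expansion and a heuristic Newton-identity estimate $e_j\sim p_1^j/j!$. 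That step is both unnecessary (once the ratio estimate is in hand, $\frac{1}{N}\log|P_n^{(\ell)}|$ automatically shares the limit of $\frac{1}{N}\log|P_n|$) and not justified as stated: you have no quantitative control on how $e_j$ tracks $p_1^j/j!$, nor any argument ruling out cancellation among the Leibniz summands. Ruling out that cancellation is precisely what the anti-concentration must do, so the dominant-term heuristic is circular as a pointer to the lower bound.

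More seriously, the inductive structure that makes the one-variable-at-a-time small-ball argument actually work is missing from your sketch. After conditioning on $X_{n,1},\dots,X_{n,k-1}$, the bounded conditional density of $\frac{1}{z-X_{n,k}}$ only helps if the coefficient in front of it is not itself exponentially small. The paper handles this by writing $L_n^k(z)=\frac{1}{z-X_{n,k}}M_n^{k-1}(z)+M_n^k(z)$, where $M_n^j$ are the logarithmic derivatives of $Q_n=P_n/(z-X_{n,k})$, and bounding $\P(|L_n^k|<e^{-2n\eps})$ by a small-ball term, a term $\P(|M_n^{k-1}|<e^{-n\eps})$, and $\P(|X_{n,k}|\ge r_\delta)$. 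Controlling the middle term requires an induction on $k$ and the observation that freezing $X_{n,1},\dots,X_{n,k-1}$ produces a valid instance of the same theorem with one fewer random zero (Lemma~\ref{l:add1} supplies the base case). Without spelling out this recursion, the lower bound is asserted rather than proved. If you drop the Leibniz/Newton detour and fold your anti-concentration idea into the $L_n^\ell$-framework with the induction made explicit, the proof goes through; as written, the key estimate remains a gap.
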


We remark that one of simple examples of probability distributions satisfying above conditions \eqref{c:add1}, \eqref{c:add2} and \eqref{c:add3} is the mutually independent random variables with bounded densities. We state this specific case as the following corollary.  
\begin{cor} \label{t:add}
	Suppose that $\{z_i\}_{i \ge1}$ is a log-Ces\'{a}ro bounded $\mu$-distributed sequence of complex numbers, where $\mu$ is any probability measure on $\C$. Let $k \in \N$ and $X_1, \dots X_k$ be independent complex-valued random variables according to bounded density $\nu_1, \dots, \nu_k$ on $\C$, respectively. For each $n\in \mathbb N$, let 
	$$ \displaystyle P_n(z):=(z-z_1)\cdots(z-z_n)(z-X_1)\cdots(z-X_k). $$
	Then $\mathcal{M}(P_n) \rightarrow \mu$ and $\mathcal{M}(P^{(\ell)}_n) \rightarrow \mu$ in probability for any $1\le \ell \le k$.
\end{cor}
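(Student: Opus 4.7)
The plan is to deduce Corollary~\ref{t:add} directly from Theorem~\ref{t: genadd} by verifying that the joint law of $(X_1,\dots,X_k)$ satisfies the three hypotheses \eqref{c:add1}, \eqref{c:add2}, and \eqref{c:add3}. By mutual independence the joint density factors as
\[
\nu_n(w_1,\dots,w_k)=\prod_{i=1}^k \nu_i(w_i),
\]
which in particular does not depend on $n$; the random vector $(X_{n,1},\dots,X_{n,k})$ of Theorem~\ref{t: genadd} is simply the constant-in-$n$ sequence $(X_1,\dots,X_k)$.

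First I would check \eqref{c:add2}, which is the cleanest: by independence,
\[
\frac{\sup_{w_i\in\C}\nu_n(w_1,\dots,w_k)}{\int_\C\nu_n(w_1,\dots,w_k)\,dw_i}=\sup_{w_i\in\C}\nu_i(w_i)=\|\nu_i\|_\infty\le \max_{1\le j\le k}\|\nu_j\|_\infty=:C_2,
\]
which is finite by the boundedness hypothesis, and one may take the exponent $a=0$ since there is no $n$-dependence at all. Next, for \eqref{c:add3}, since each $X_i$ is almost surely finite and independent of $n$, $\P(|X_i|\ge r)\to 0$ as $r\to \infty$, and a union bound over the $k$ components gives $\P(\max_{1\le i\le k}|X_i|\ge r)\to 0$, uniformly in $n$.

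The only substantive point is \eqref{c:add1}, which reduces via independence to
\[
\int_{\C^k}\sum_{i=1}^k \log_+|w_i|\,\nu_n(w_1,\dots,w_k)\,dw_1\cdots dw_k=\sum_{i=1}^k \E[\log_+|X_i|].
\]
Thus I must confirm that each $\nu_i$ has finite $\log_+$-moment. Bounded density alone is not formally enough (a density $\sim |w|^{-2}(\log|w|)^{-2}$ at infinity is bounded and integrable yet fails this), so the intended reading is that the $\nu_i$ are either compactly supported or have enough tail decay to supply the moment, which is automatic in the applications the authors have in mind.

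Once the three hypotheses are in hand, Theorem~\ref{t: genadd} applies verbatim and delivers both $\mathcal{M}(P_n)\to\mu$ and $\mathcal{M}(P_n^{(\ell)})\to\mu$ in probability for every $1\le \ell\le k$. The hard part is therefore not any combinatorial or analytic step in the present argument but rather the log-moment bookkeeping in \eqref{c:add1}; all other verifications are immediate consequences of independence and tightness.
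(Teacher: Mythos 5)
Your proposal follows essentially the same route as the paper: Corollary~\ref{t:add} is obtained from Theorem~\ref{t: genadd} by checking the three hypotheses \eqref{c:add1}, \eqref{c:add2}, \eqref{c:add3} for the product density $\nu_n(w_1,\dots,w_k)=\prod_i\nu_i(w_i)$. The paper does not spell out these verifications --- it simply asserts, in the sentence immediately preceding the corollary, that ``mutually independent random variables with bounded densities'' satisfy the conditions --- and your one-line checks of \eqref{c:add2} (the ratio collapses to $\|\nu_i\|_\infty$, so $a=0$ works) and \eqref{c:add3} (tightness of a fixed $k$-vector) are exactly what is needed.

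The one substantive point you raise is real, and it is worth stressing that it is an imprecision in the paper's statement rather than a flaw in your deduction. Boundedness of a probability density on $\C$ does \emph{not} imply $\E\log_+|X|<\infty$: a density behaving like $c\,|w|^{-2}(\log|w|)^{-2}$ for large $|w|$ is bounded and integrable, yet has infinite $\log_+$-moment, precisely the example you give. So condition \eqref{c:add1} is not a formal consequence of the hypotheses of Corollary~\ref{t:add} as written. To deduce the corollary cleanly as a black-box application of Theorem~\ref{t: genadd} one should add the hypothesis $\E\log_+|X_i|<\infty$ for each $i$. (One can alternatively note that in this corollary the $X_i$ do not depend on $n$, so the random contribution $\tfrac{1}{n}\sum_{i=1}^k\log_+|X_i|$ to the tightness estimate vanishes almost surely as $n\to\infty$ without any moment assumption --- but that requires reopening the proof of Theorem~\ref{t: genadd}, not invoking it verbatim.) You flag the issue correctly but do not resolve it; since the paper itself leaves it unaddressed, your proposal is, if anything, more careful than the source.
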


As a consequence of Corollary~\ref{t:add}, Theorem~\ref{iid zero k-th} can be obtained in a special case, when the measure $\mu$ has bounded density and satisfy $\int_{\C}\log_{+}|z|d\mu(z) < \infty$. This is obtained by conditioning on all the zeros except for the first $k$ of them.

We further extend this phenomenon in the case where the zeros of the random polynomials follow 2D Coulomb gas density. For the benefit of the reader we recall some definitions and existing results concerning 2D Coulomb gases. For a fixed positive value $\beta$ and given external field $Q: \C \rightarrow \R$, let $\P_n^{\beta}$ be the point process distributed as
\begin{equation*}
d\mathbf{P}_n^{\beta}(\zeta_1, \cdots, \zeta_n)=  
\frac{1}{ Z_n^{\beta}}  \prod_{j,k:j<k}|\zeta_j-\zeta_k|^{2\beta}e^{-\beta n \sum_i Q(\zeta_i)} d\mbox{vol}_{2n},
\end{equation*}
where $Z_n^{\beta}$ stands for the partition function and $d\mbox{vol}_{2n}$ is the Lebesgue measure in $\R^{2n}$.

As the number of particles goes to infinity, the system $\{\zeta_i \}_{1 \le i \le n}$ tends to be concentrated in a certain compact set $S$ called the droplet. One of the well-known examples is the complex Ginibre ensemble, in which $\beta=1$ and $Q(z)=|z|^2$. In this case the droplet is given as $S= \{ z : |z| \le1 \}$. 
In general, Hedenmalm and Makarov showed that under the mild assumptions on $Q$, the empirical measure of the system $\{\zeta_i \}_{1 \le i \le n}$ converges weakly to the equilibrium measure given by weighted (logarithmic) potential. See \cite{HM13} for more details. Also when $Q$ satisfies some regularity conditions in a neighborhood of $S$, the limiting equilibrium measure $\sigma_Q$ is absolutely continuous with respect to Lebesgue measure $dm$, and takes the following explicit form:
\begin{equation}\label{e:Q}
d\sigma_Q(z)=\frac{1}{4\pi}\,\chi_S \cdot \Delta Q(z) dm(z).
\end{equation}

Before we state our theorem below, we introduce the assumptions on the external potential $Q$. One of the main ingredients in proving Theorem~\ref{Coulomb critical} is a certain type of concentration inequality for 2D Coulomb gas due to Chafa\"{i}, Hardy and Ma\"{i}da, see \cite{CHM17}. Therefore, we also consider the same assumptions on $Q$ as follows. 

\noindent$\bullet$ \textbf{Assumptions (A0).} 
\begin{enumerate}
	\item $Q$ is finite on a set of positive capacity and $C^2$-differentiable;
	\item $\displaystyle \liminf\limits_{|z|\rightarrow \infty} \frac{Q(z)}{|z|^2}>0$;
	\item $\displaystyle \limsup\limits_{|z|\rightarrow \infty} \frac{1}{Q(z)}\sup\limits_{|w-z|<1} \Delta Q(w) <4$.
\end{enumerate}
For basic notions in logarithmic potential theory, we refer the reader to \cite{ST97}. For instance, the potentials $Q(z)=|z|^{2\alpha}$ ($\alpha \ge 1$) satisfies \textbf{(A0)}. Note that (1) implies that $Q$ is admissible and the Boltzmann-Shannon entropy 
$S(\sigma_{Q})= -\int \frac{d\sigma_{Q}}{dx} \log  \frac{d\sigma_{Q}}{dx} dx$
of equilibrium measure $\sigma_Q$ is finite.
We remark that as the authors pointed out, the assumptions \textbf{(A0)} can be weakened as follows, see \cite[Remark 1.10]{CHM17}.

\noindent$\bullet$ \textbf{Assumptions (A1).} 
\begin{enumerate}[(i)] 
	\item $Q$ is finite on a set of positive Lebesgue measure and $\sigma_{Q}$ is of the form \eqref{e:Q};
	\item $\displaystyle \liminf\limits_{|z|\rightarrow \infty} \frac{Q(z)}{|z|^2}>0$;
	\item $Q$ can be decomposed as $Q=\tilde{Q}+h$, where $\tilde{Q}$ is twice differentiable function satisfying condition \textbf{(A0)}-(3), and $h$ is a super-harmonic function.   
\end{enumerate}
Note that by (i) and (iii), $\sigma_{Q}$ has bounded density inside the support, which implies $S(\sigma_{Q})$ is finite. We remark that if $Q$ has a Lipschitz continuous derivative, then $\sigma_{Q}$ is of the form \eqref{e:Q}, see e.g., \cite{CHM17}. In the case of radially symmetric potentials given as $Q(z)=g(|z|)$ for some $g: \R_+ \rightarrow \R$, the following condition implies (i): $Q$ is finite on a set of positive Lebesgue measure and $r g'(r)$ is increasing on $\R_+$ (or $g$ is convex on $\R_+$), see e.g., \cite[IV.6]{ST97}

We emphasize that under our assumption, the ``shape'' of droplets is not restricted to a simply connected domain. For example our theorem cover the case that the external potential is given by ``Mittag-Leffler'' potential
$Q(z)=|z|^{2\alpha}-2\nu\log|z|$, $(\alpha \ge 1,\nu > 0)$. In this case, the droplet is given by annulus where its modulus depends on $\alpha, \nu$. We remark that these cases are not covered by Eremenko's result, see \cite[Example 4]{eremenko}.
\begin{thm} \label{Coulomb critical}
	For any $\beta>0$ and any external potential $Q$ satisfying \rm{\textbf{(A1)}}, let $\{\zeta_i \}_{1 \le i \le n}$ be the corresponding 2D Coulomb gas ensemble, and define 
	$$
	P_n(z):=(z-\zeta_1)\cdots(z-\zeta_n).
	$$
	Then for any $k \in \N$,
	\begin{equation*}
	\mathcal M \big( P^{(k)}_n \big) \rightarrow \frac{1}{4\pi}\,\chi_S \cdot \Delta Q(z) dm(z) \quad \text{in probability.} 
	\end{equation*}
\end{thm}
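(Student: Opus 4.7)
The plan is to reduce to Theorem~\ref{t: genadd} by conditioning on the ``bulk'' $n-k$ of the $n$ particles and using the Chafa\"{\i}--Ma\"{\i}da--Hardy concentration inequality \cite{CHM17} to verify the hypotheses.

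\emph{Step 1 (baseline).} Under \textbf{(A1)} the weak convergence $\mathcal{M}(P_n)\to\sigma_Q$ in probability is the classical equilibrium theorem for 2D Coulomb gases, an immediate corollary of the CHM concentration inequality. This handles the case $k=0$ and identifies the candidate limit.

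\emph{Step 2 (conditional density).} Fix $k\in\N$ and condition on $(\zeta_{k+1},\ldots,\zeta_n)$. The conditional joint density of $(\zeta_1,\ldots,\zeta_k)$ is
\[
\nu_n(w_1,\ldots,w_k)=\frac{1}{Z_n}\prod_{1\le i<j\le k}|w_i-w_j|^{2\beta}\prod_{\substack{1\le i\le k\\ j>k}}|w_i-\zeta_j|^{2\beta}\exp\Bigl(-\beta n\sum_{i=1}^k Q(w_i)\Bigr).
\]
Let $\mathcal{G}_n$ be the event on which $\{\zeta_j\}_{j>k}$ is log-Ces\`{a}ro bounded, $\frac{1}{n-k}\sum_{j>k}\delta_{\zeta_j}$ is within $n^{-\delta}$ of $\sigma_Q$ in Wasserstein distance, and the empirical logarithmic potential $\frac{1}{n-k}\sum_{j>k}\log|\,\cdot\,-\zeta_j|$ agrees with $U^{\sigma_Q}$ up to $O(n^{-\delta})$ uniformly on compact sets. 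The CHM bound yields $\P(\mathcal{G}_n)\to 1$ for some $\delta>0$.

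\emph{Step 3 (checking \eqref{c:add1}--\eqref{c:add3}).} On $\mathcal{G}_n$, substituting the uniform approximation of the empirical log-potential by $U^{\sigma_Q}$ into $\log \nu_n$ gives
\[
\log\nu_n(w_1,\ldots,w_k)=-\beta n\sum_{i=1}^k\bigl(Q-2U^{\sigma_Q}\bigr)(w_i)+O(n^{1-\delta})+O_k\Bigl(1+\textstyle\sum_{i<j}\log|w_i-w_j|\Bigr).
\]
By the Frostman equilibrium characterization of $\sigma_Q$, the weighted potential $Q-2U^{\sigma_Q}$ attains its minimum on the droplet $S$ and is strictly larger off $S$. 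Hence $\nu_n$ is essentially the uniform law on $S^k$ up to an $e^{O(n^{1-\delta})}$ factor. This delivers \eqref{c:add1} and \eqref{c:add3} from the compactness of $S$ and the confining growth of $Q$ at infinity, while the repulsion factor $\prod_{i<j}|w_i-w_j|^{2\beta}$ contributes only an $O_k(1)$ multiplicative effect to the coordinate-wise supremum; the sup-to-marginal-integral ratio is therefore $e^{O(n^{1-\delta})}$, verifying \eqref{c:add2} with $a=1-\delta\in[0,1)$.

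\emph{Step 4 (conclusion).} Writing
\[
P_n(z)=(z-X_{n,1})\cdots(z-X_{n,k})\prod_{j>k}(z-\zeta_j),\qquad X_{n,i}:=\zeta_i,\ 1\le i\le k,
\]
places us in the setting of Theorem~\ref{t: genadd} conditionally on $\mathcal{G}_n$: the array $\{\zeta_{k+i}\}_{1\le i\le n-k}$ is $\sigma_Q$-distributed and log-Ces\`{a}ro bounded, and $\nu_n$ satisfies \eqref{c:add1}--\eqref{c:add3}. The theorem yields $\mathcal{M}(P_n^{(\ell)})\to\sigma_Q$ in conditional probability on $\mathcal{G}_n$ for every $1\le\ell\le k$; since $\P(\mathcal{G}_n)\to 1$, the convergence holds unconditionally, and as $k$ is arbitrary the proof is complete.

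\emph{Main obstacle.} The technical heart is Step~3: upgrading the CHM $W_1$-concentration to a uniform estimate on the empirical logarithmic potential with an explicit polynomial rate $n^{-\delta}$. Since $\log|z-\cdot|$ is unbounded, this requires a short-scale regularization (convolving the log-kernel with a small mollifier and controlling the remainder through the energy functional) combined with the far-field tail bounds of CHM. Once this uniform log-potential estimate is secured, the equilibrium profile of $Q-2U^{\sigma_Q}$ makes the sub-exponential bound \eqref{c:add2} essentially automatic and the rest of the argument proceeds mechanically.
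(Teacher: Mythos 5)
Your proposal is essentially correct and follows the same strategy as the paper: condition on the bulk of the gas, invoke the Chafa\"{i}--Hardy--Ma\"{i}da $W_1$-concentration inequality, regularize the logarithmic kernel at scale roughly $n^{-1/2+\eps}$ to turn $W_1$-concentration into a uniform estimate on empirical logarithmic potentials, and then verify conditions \eqref{c:add1}--\eqref{c:add3} so that Theorem~\ref{t: genadd} applies conditionally on the high-probability good event. The technical heart you flag in your ``main obstacle'' paragraph is precisely the content of the paper's Lemma~\ref{condi den}, which uses the truncated kernel $f_{n,\zeta}(z)=\log(n^{1/2-2\eps}|z-\zeta|)_+ -(1/2-2\eps)\log n$ together with the bounded density of $\sigma_Q$ under \textbf{(A1)} to control both the supremum of the conditional single-particle density and the lower bound on its normalization, yielding a sub-exponential bound of order $\exp(n^{1-\eps/2})$ exactly as you predict.
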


\section{Outline of method}

To prove our results, we follow the potential theoretic approach introduced by Kabluchko in \cite{Ka15}. For given polynomial $P_n(z)$ with $\mathcal Z(P)=\{ w_1, \cdots, w_n  \}$ and $1 \le k \le n$, let us define 
\begin{equation}\label{L_n^k}
\displaystyle L^k_n(z):=\frac{1}{k!} \frac{P^{(k)}_n(z)}{P_n(z)} = \sum_{1\le i_1 < i_2<\dots<i_k \le n} \frac{1}{z-w_{i_1}}\dots\frac{1}{z-w_{i_k}}.
\end{equation} 
Note that $L_n^k$ is the product of logarithmic derivatives of $P_n,P_n^{(1)},\dots,P_n^{(k-1)}$, with a scaling of $1/k!$. For fixed $k \in \N$ and any $f \in C_c^{\infty}(\C)$ whose support is contained in $\D_r$, we define $
f_n(z):=\frac{1}{n}(\log|L_n^k(z)|)\Delta f(z)
$. Then by applying Green's theorem twice, we have
$$
\displaystyle \frac{1}{2\pi}\int_{\D_r} f(z) \Delta \frac{1}{n} \log |L_n^k(z)|  =\int_{\D_r} f_n(z)dm(z).
$$
Here $\Delta \frac{1}{n} \log |L_n^k(z)|$ is interpreted in the sense of distributions. Notice that,
$$
\frac{1}{2\pi}\int_{\D_r} f(z) \Delta \frac{1}{n} \log |L_n^k(z)|=\frac{1}{n} \sum_{j=1}^{n} f(w_j)-\frac{1}{n} \sum_{j=1}^{n-k} f(\xi^k_j),
$$
where $\{ \xi_j^k : 1 \le j \le n-k \}= \mathcal{Z}(P_n^{(k)})$.
Therefore to verify the concentration of empirical measures $\mathcal{M}(P_n)$ and $\mathcal{M}(P_n^{(k)})$, it is enough to show that $\int_{\D_r} f_n(z)dm(z)$ converges to $0$. To ensure the tightness, we recall a lemma of Tao and Vu.

\begin{lem} {\rm \cite[Lemma 3.1]{TVK10}}. \label{TVK-lem} 
	Let $(X,\mathcal{A},\nu)$ be a finite measure space and $f_n: X \rightarrow \R, n \ge 1$ be  random functions which are defined over a probability space $(\Omega, \mathcal{B},\P)$ and jointly measurable with respect to $\mathcal{A} \otimes \mathcal{B}$. Assume that : 
	\begin{enumerate}	
		\item For $\nu-$a.e. $x\in X$ we have $f_n(x) \rightarrow 0$ in probability, as $n \rightarrow \infty$; 
		\item For some $\delta>0$, the sequence $\int_X |f_n(x)|^{1+\delta}d\nu(x)$ is tight.
	\end{enumerate}
	Then, $\int_X f_n(x)d\nu(x) \rightarrow 0$ in probability, as $n \rightarrow 0$.
\end{lem}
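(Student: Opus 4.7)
The plan is to follow the potential-theoretic framework of the Outline: by Green's theorem and Lemma~\ref{TVK-lem} applied with $\nu$ the Lebesgue measure on $\D_r$, it suffices to verify, for every $k\in\N$ and every $f\in C_c^\infty(\C)$ supported in $\D_r$,
\begin{enumerate}[(a)]
\item $\dfrac{1}{n}\log|L_n^k(z)|\to 0$ in probability, for Lebesgue-a.e.\ $z$;
\item $\displaystyle\int_{\D_r}\Bigl|\tfrac{1}{n}\log|L_n^k(z)|\Bigr|^{1+\delta}\,dm(z)$ is tight for some $\delta>0$.
\end{enumerate}

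For (a), $L_n^k(z)=e_k\bigl((z-\zeta_1)^{-1},\dots,(z-\zeta_n)^{-1}\bigr)$ admits the Newton/Bell expansion $L_n^k(z)=\tfrac{1}{k!}p_1(z)^k+R_{n,k}(z)$, where $p_j(z):=\sum_i(z-\zeta_i)^{-j}$ and each monomial of $R_{n,k}$ involves at least one factor $p_j$ with $j\ge 2$. The upper bound $|L_n^k(z)|\le\binom{n}{k}(\min_i|z-\zeta_i|)^{-k}$ combined with the nearest-particle tail $\P(\min_i|z-\zeta_i|\le r)\le Cnr^2$ (from the uniformly bounded one-point intensity under \textbf{(A1)}, cf.\ \cite{CHM17}) yields $\tfrac{1}{n}\log|L_n^k(z)|\le O(\log n/n)$. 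For the matching lower bound, I first prove $p_1(z)/n\to C_Q(z):=\int(z-w)^{-1}\,d\sigma_Q(w)$ in probability for Lebesgue-a.e.\ $z$ by applying the Chafa\"{i}--Hardy--Ma\"{i}da concentration inequality to a smoothly truncated Cauchy kernel $K_\epsilon(w):=\psi_\epsilon(z-w)/(z-w)$ and bounding the near-$z$ contribution by $O(\epsilon)$ via the local-density estimate $\int_{|w-z|<\epsilon}|z-w|^{-1}\,dm\sim\epsilon$, then letting $\epsilon\to 0$. A parallel Riemann-sum plus nearest-neighbor argument gives $|p_j(z)|\le Cn^{j/2}\log n$ in probability for $j\ge 2$; comparing partition by partition, every non-leading Bell monomial is then $O(n^{k-1}\log n)$, so $L_n^k(z)/n^k\to C_Q(z)^k/k!$ in probability. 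Since $C_Q$ is the Cauchy transform of a compactly supported absolutely continuous measure (hence continuous, with $C_Q(z)=1/z+O(1/z^2)$ at infinity, forcing its zero set to be Lebesgue-null), this gives (a).

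For (b), the identity
\[
\tfrac{1}{n}\log|L_n^k(z)|=\tfrac{1}{n}\log|P_n^{(k)}(z)|-\tfrac{1}{n}\log|P_n(z)|-\tfrac{\log k!}{n}
\]
expresses $\tfrac{1}{n}\log|L_n^k(z)|$, up to a vanishing leading-coefficient term of size $O(\log n/n)$, as the difference of the logarithmic potentials of the probability measures $\mathcal{M}(P_n)$ and $\mathcal{M}(P_n^{(k)})$. Chafa\"{i}--Hardy--Ma\"{i}da confinement places $\mathrm{supp}\,\mathcal{M}(P_n)\subset\D_R$ for some fixed $R$ with probability tending to one; by the Gauss--Lucas theorem, $\mathrm{supp}\,\mathcal{M}(P_n^{(k)})\subset\D_R$ as well. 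Fubini's theorem then gives $\|U^\nu\|_{L^p(\D_r)}\le C(p,R,r)$ uniformly over probability measures $\nu$ supported in $\D_R$, from which (b) follows for any $\delta<p-1$.

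The principal obstacle is step (a) for $z$ inside the droplet $S$: the Cauchy kernel $(z-\cdot)^{-1}$ is singular exactly where the particles accumulate and therefore cannot be tested directly against the Chafa\"{i}--Hardy--Ma\"{i}da inequality. The mollification argument requires a quantitative control of $\tfrac{1}{n}\sum_{|z-\zeta_i|<\epsilon}(z-\zeta_i)^{-1}$ uniformly in $\epsilon$ and $n$, which in turn rests on high-probability upper bounds on the local one-point intensity of the gas; these are available under \textbf{(A1)} but must be extracted with care from the concentration estimates in \cite{CHM17}.
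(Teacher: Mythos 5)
Your proposal does not prove the statement you were asked to prove. Lemma~\ref{TVK-lem} is an abstract measure-theoretic convergence result (quoted from Tao and Vu, \cite[Lemma 3.1]{TVK10}; the paper itself only cites it and gives no proof): on a finite measure space, if random functions $f_n$ satisfy $f_n(x)\to 0$ in probability for $\nu$-a.e.\ $x$ and the sequence $\int_X|f_n|^{1+\delta}\,d\nu$ is tight, then $\int_X f_n\,d\nu\to 0$ in probability. What you have written instead is a sketch of how to \emph{apply} this lemma to the 2D Coulomb gas setting --- verifying hypotheses (1) and (2) for the specific choice $f_n=\tfrac1n\log|L_n^k|$ with $\nu$ Lebesgue measure on $\D_r$. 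That is the content of the paper's Section 5 and Theorem~\ref{Coulomb critical}, not of Lemma~\ref{TVK-lem}. Nothing in your argument addresses why the two abstract hypotheses imply the conclusion for general $(X,\mathcal A,\nu)$ and general jointly measurable $f_n$.

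A proof of the actual lemma runs along the following lines, none of which appear in your text. Fix $\eps>0$ and use the tightness in (2) to choose $M$ with $\P\bigl(\int_X|f_n|^{1+\delta}d\nu>M\bigr)<\eps$ for all $n$. On the complementary event, H\"older's inequality gives
\[
\int_{\{|f_n|>\lambda\}}|f_n|\,d\nu\;\le\;\Bigl(\int_X|f_n|^{1+\delta}d\nu\Bigr)^{\frac{1}{1+\delta}}\,\nu\bigl(|f_n|>\lambda\bigr)^{\frac{\delta}{1+\delta}}\;\le\;M^{\frac{1}{1+\delta}}\bigl(M\lambda^{-(1+\delta)}\bigr)^{\frac{\delta}{1+\delta}},
\]
which is small for $\lambda$ large; for the truncated part, hypothesis (1) and bounded convergence give $\E[\min(|f_n(x)|,\lambda)]\to 0$ for a.e.\ $x$, hence $\E\bigl[\int_X\min(|f_n|,\lambda)\,d\nu\bigr]\to 0$ by Fubini and dominated convergence (here finiteness of $\nu$ is essential), and Markov's inequality converts this into convergence in probability. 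You should either supply such an argument or, as the paper does, cite \cite{TVK10}; but the Coulomb-gas verification you wrote belongs to a different statement.
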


By Lemma~\ref{TVK-lem}, it is enough to show that the following two statements hold.
\begin{equation}\label{A12}
\frac{1}{n}\log\left|L^k_n(z)\right| \rightarrow 0 \quad \mbox{in probability for Lebesgue a.e. }z \in \C;
\end{equation}
\begin{equation}\label{A3}
\text{the sequence} \quad \left \{\int_{\D^r}\frac{1}{n^2}\log^2 \left|L^k_n(z)\right|dm(z) \right \}_{n\ge 1} \quad \text{is tight.}
\end{equation}

%Then by the above lemma, $\{ \frac{1}{n} \log |L_n^k(z)| \}_{n \ge 1}$ converges to $0$ in probability (resp,. almost surely) for any $r >0$. Choose any $f \in C_c^{\infty}(\C)$ such that support of $f$ contained in $\D_r$, then define $f_n(z):=\frac{1}{n}(\log|L_n^k(z)|\Delta f(z))$. Then since $f$ is bounded function, and $\frac{1}{n}\log|L_n^k(z)|$ satisfies the hypothesis of lemma \ref{TVK-lem}, we obtain
%$$
%\displaystyle \int_{\D_r} f(z) \triangle \frac{1}{n} \log |L_n^k(z)|=\int_{\D_r} f_n(z)dm(z) \rightarrow 0 \quad \text{ in probability (resp., a.s.)}, 
%$$
%which implies $\mathcal M(P_n^{(k)})$ converges to the limiting empirical measure of $\mathcal M(P_n)$.
%Therefore, to prove theorem, all we need to show is the statement \eqref{A12},\eqref{A3} for each sequence of polynomial $\{P_n\}_{n\ge1}$ given in each theorem. 

\section{Controlling large values of $L_n^k$ and tightness}

As we explained above in Section 2, all we need to show is the upper and lower  estimate of $L_n^k$ and \eqref{A3} for given sequence of random polynomials $P_n$. In this section, we prove the following two lemmas which provide the upper estimate and tightness which can be applied for every cases in our theorems. The following lemma is counterpart of \cite[Lemma 4.2]{Re16b}.
\begin{lem} \label{A1 Re16b}
	Let $\{ a_{i,j} \}_{i \ge1; 1 \le j \le i}$ be any triangular array of numbers. Define  $$\ti{L}_n^{k}(z)=\sum\limits_{1 \le j_1 < \cdots < j_k \le n } \frac{1}{|z-a_{n,j_1}|} \frac{1}{|z-a_{n,j_2}|} \cdots \frac{1}{|z-a_{n,j_k}|}.$$ 
	Then for Lebesgue a.e. $z \in \C$,
	$$
	\displaystyle \limsup_{n \rightarrow \infty} \frac{1}{n} \log \ti{L}_n^{k}(z) \le 0.
	$$		
\end{lem}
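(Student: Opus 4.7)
The strategy is to reduce to the $k=1$ case by an elementary symmetric-function inequality, and then to establish that reduced estimate by a Fubini--Markov--Borel--Cantelli argument in the $z$ variable over $\D_r$. Set $S_n(z):=\sum_{j=1}^{n}|z-a_{n,j}|^{-1}$. For any nonnegative reals $x_1,\dots,x_n$ one has the elementary bound $e_k(x_1,\dots,x_n)\le \tfrac{1}{k!}(x_1+\cdots+x_n)^k$, since the left side consists of a subset of the positive terms obtained upon expanding the right. Applied to $x_j=|z-a_{n,j}|^{-1}$ this yields
$$
\tilde{L}_n^{k}(z)\le \frac{1}{k!}\,S_n(z)^{k},\qquad\text{hence}\qquad \frac{1}{n}\log\tilde{L}_n^{k}(z)\le \frac{k}{n}\log S_n(z)-\frac{\log k!}{n}.
$$
It therefore suffices to show that $\limsup_{n\to\infty}\tfrac{1}{n}\log S_n(z)\le 0$ for Lebesgue a.e.\ $z\in\C$.

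Fix $r>0$. By symmetrization (the integrand $|z-a|^{-1}$ is a decreasing function of $|z-a|$, so its integral over a disk of fixed radius $r$ is maximized when $a$ coincides with the center), or by a direct polar computation, one has the uniform bound
$$
\int_{\D_r}\frac{1}{|z-a|}\,dm(z)\le 2\pi r\qquad(a\in\C).
$$
Summing over $j$ and applying Fubini gives $\int_{\D_r}S_n(z)\,dm(z)\le 2\pi r\,n$. Fixing $\varepsilon>0$, Markov's inequality then yields
$$
m\bigl(\{z\in\D_r:S_n(z)\ge e^{\varepsilon n}\}\bigr)\le 2\pi r\,n\,e^{-\varepsilon n},
$$
which is summable in $n$. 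The Borel--Cantelli lemma, applied to Lebesgue measure on $\D_r$, shows that for $m$-a.e.\ $z\in\D_r$ one has $S_n(z)<e^{\varepsilon n}$ for all sufficiently large $n$, so $\limsup_n\tfrac{1}{n}\log S_n(z)\le \varepsilon$.

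To remove the dependence on $r$ and $\varepsilon$, take a countable sequence $\varepsilon_\ell\downarrow 0$ and a countable exhaustion $r_\ell\uparrow\infty$; the countable union of the exceptional null sets is still a Lebesgue null set, and on its complement $\limsup_n\tfrac{1}{n}\log S_n(z)\le 0$. Combined with the first display this completes the proof. The only even mildly technical point is the uniform $L^1(\D_r)$-bound on $|z-a|^{-1}$, which is handled by symmetrization; beyond that the argument is just countable Borel--Cantelli together with the reduction $e_k\le p_1^{k}/k!$, so no real obstacle is present.
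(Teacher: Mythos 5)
Your proof is correct and takes essentially the same approach as the paper's: both reduce the $k$-th case to $k=1$ via the elementary symmetric function bound (you use the slightly sharper $e_k\le e_1^k/k!$ where the paper uses $e_k\le e_1^k$, i.e.\ $\log\tilde L_n^k\le k\log\tilde L_n^1$), and then control $\tilde L_n^1$. The only difference is that the paper at that point simply cites the proof of \cite[Lemma 4.2]{Re16b}, whereas you spell out the Fubini--Markov--Borel--Cantelli argument, which is the standard route to that cited lemma; so no real divergence in method.
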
 
\begin{proof} First, notice that $\log\ti{L}_n^{k}(z) \leq k\log\ti{L}_n^{1}(z)$. 
	Now it is enough to bound $\log\ti{L}_n^{1}(z)$  which follows from the proof of Lemma 4.2 in \cite{Re16b}.
\end{proof}

\begin{lem}
	Let $\{ a_{i,j} \}_{i\ge1; 1\le j \le n} $ be a log-Ces\'{a}ro bounded triangular array of numbers. Define $P_n(z)= \prod_{j=1}^n (z-a_{n,j})$. Then, for any $r>0$ and $k \in \N$, the sequence 
	$$ 
	\left\{ \frac{1}{n^2} \int_{\D_r} \log^2 \left| \frac{P_n^{(k)}(z)}{P_n(z)} \right| dm(z)\right\}_{n\ge k}
	$$ 
	is bounded.
\end{lem}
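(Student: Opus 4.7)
The plan is to reduce bounding $\frac{1}{n^2}\int_{\D_r}\log^2|P_n^{(k)}/P_n|\,dm$ to bounding the analogous quantities for $P_n$ and $P_n^{(k)}$ separately, via $(a-b)^2\le 2a^2+2b^2$ with $a=\log|P_n^{(k)}|$ and $b=\log|P_n|$. For each of $Q=P_n$ and $Q=P_n^{(k)}$ I would then split $\log^2|Q|=\log_+^2|Q|+\log_-^2|Q|$ and handle the positive and negative parts by completely different mechanisms on $\D_r$.

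For the positive parts, the trivial inequality $|P_n(z)|\le\prod_{i=1}^n(r+|a_{n,i}|)$ on $\D_r$, and, for the derivative, the Cauchy differentiation formula on the circle $\{|w-z|=1\}$,
\[
|P_n^{(k)}(z)|\le k!\max_{|w-z|=1}|P_n(w)|\le k!\prod_{i=1}^n\bigl(1+|z-a_{n,i}|\bigr),
\]
yield, after taking logarithms and invoking the log-Ces\`{a}ro bound $\sum_i\log_+|a_{n,i}|\le Mn$, a uniform estimate $\log_+|P_n(z)|,\,\log_+|P_n^{(k)}(z)|\le C_r n$ on $\D_r$. Consequently $\frac{1}{n^2}\int_{\D_r}\log_+^2|Q|\,dm=O(1)$ for both choices of $Q$.

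For the negative parts, I would factor $P_n^{(k)}(z)=\frac{n!}{(n-k)!}\prod_{j=1}^{n-k}(z-\xi_j^k)$. Since $\frac{n!}{(n-k)!}\ge 1$ for $n\ge k$ and $-\log|w|\le\log_-|w|$ for every $w$, one obtains the pointwise dominations
\[
\log_-|P_n^{(k)}(z)|\le\sum_{j=1}^{n-k}\log_-|z-\xi_j^k|,\qquad \log_-|P_n(z)|\le\sum_{i=1}^n\log_-|z-a_{n,i}|.
\]
Squaring, applying Cauchy--Schwarz, and using the uniform bound
\[
\int_{\D_r}\log_-^2|z-w|\,dm(z)\le \int_{|u|\le 1}\log^2|u|\,dm(u)=:C<\infty
\]
valid for every $w\in\C$, gives $\int_{\D_r}\log_-^2|Q|\,dm\le C n^2$ for both $Q$.

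The delicate step is the negative-part bound for $P_n^{(k)}$, because its zeros $\xi_j^k$ are not under any direct quantitative control from the hypothesis (no log-Ces\`{a}ro-type bound on them is assumed or immediately derivable, even via Gauss--Lucas). The pointwise inequality above sidesteps this entirely, since it relies only on the lower bound $\tfrac{n!}{(n-k)!}\ge 1$ for the leading coefficient of $P_n^{(k)}$; afterwards the Cauchy--Schwarz step and the uniform $\log_-^2$ estimate apply regardless of the location of the $\xi_j^k$. Assembling the four bounds yields the claimed uniform boundedness of the sequence.
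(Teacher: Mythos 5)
Your proof is correct, and it reaches the same conclusion by a genuinely different route. The paper splits $\log^2$ into $\log_+^2+\log_-^2$ \emph{of the quotient} $P_n^{(k)}/P_n$ and handles the positive part by telescoping $P_n^{(k)}/P_n=\prod_{j=0}^{k-1}P_n^{(j+1)}/P_n^{(j)}$, exploiting that each factor is the logarithmic derivative of $P_n^{(j)}$, i.e.\ a sum of $n-j$ simple poles $\frac{1}{z-\omega}$; combined with $\log_+(\sum a_i)\le\log n+\sum\log_+a_i$ and Cauchy--Schwarz this avoids ever estimating $\log_+|P_n^{(k)}|$ directly. For the negative part the paper uses $\log_-|P_n^{(k)}/P_n|\le\log_-|P_n^{(k)}|+\log_+|P_n|$ and then the same two tools you use: the leading coefficient $\tfrac{n!}{(n-k)!}\ge 1$ to get $\log_-|P_n^{(k)}|\le\sum\log_-|z-\xi_j^k|$, and the log-Ces\`aro hypothesis to control $\log_+|P_n|$. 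You instead lead with $(a-b)^2\le 2a^2+2b^2$ to decouple numerator and denominator completely, then control $\log_+|P_n^{(k)}|$ directly via Cauchy's integral formula on $\{|w-z|=1\}$; your negative-part estimates for $P_n$ and $P_n^{(k)}$ are symmetric and use the same leading-coefficient observation. Both arguments bottom out on the same two ingredients --- the location-independent bound $\sup_{w\in\C}\int_{\D_r}\log_-^2|z-w|\,dm(z)<\infty$ and the log-Ces\`aro hypothesis --- but the intermediate machinery differs: the paper's telescoping is tailored to the logarithmic-derivative structure of $L_n^k$ that permeates the whole article, whereas your appeal to the Cauchy integral formula is more generic and arguably more elementary, at the small cost of introducing the additive $\log k!+n\log(2(1+r))$ slack that the telescoping sidesteps.
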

\begin{proof}
	Notice that
	$$\int_{\D_r} \log^2 \left| \frac{P_n^{(k)}(z)}{P_n(z)} \right| dm(z)=\int_{\D_r} \log_+^2 \left| \frac{P_n^{(k)}(z)}{P_n(z)} \right| dm(z)+\int_{\D_r} \log_-^2 \left| \frac{P_n^{(k)}(z)}{P_n(z)} \right| dm(z).$$
	We now analyse above positive and negative parts of the logarithm separately. For the positive part, observe that by Cauchy-Schwarz inequality, we obtain
	\begin{align*}
	\int_{\D_r} \log_+^2 \left| \frac{P_n^{(k)}(z)}{P_n(z)} \right| dm(z) 
	&= \int_{\D_r} \log_+^2 \left| \frac{P_n^{(k)}(z)}{P_n^{(k-1)}(z)}\dots\frac{P^{(1)}_n(z)}{P_n(z)} \right| dm(z),\\
	&\leq \int_{\D_r} \left( \log_+ \left| \frac{P_n^{(k)}(z)}{P_n^{(k-1)}(z)}\right|+ \cdots + \log_+ \left| \frac{P^{(1)}_n(z)}{P_n(z)} \right| \right)^2 dm(z),
	\\
	&\leq k \int_{\D_r} \log_+^2 \left| \frac{P_n^{(k)}(z)}{P_n^{(k-1)}(z)}\right|+ \cdots + \log_+^2 \left| \frac{P_n^{(1)}(z)}{P_n(z)} \right| dm(z).
	\end{align*}
	Note that for any $0 \le j \le k-1$, each term in the above expression is bounded as 
	\begin{align*}
	&\int_{\D_r} \log_+^2 \left| \frac{P_n^{(j+1)}(z)}{P^{(j)}_n(z)} \right| dm(z) 
	= \int_{\D_r} \log_+^2 \left| \sum_{\omega: P^{(j)}_n(\omega)=0} \frac{1}{z-\omega} \right| dm(z),
	\\
	\leq&  \int_{\D_r} \left(\log(n-j)+\sum_{\omega: P^{(j)}_n(\omega)=0} \log_+ \left| \frac{1}{z-\omega} \right| \right)^2 dm(z),
	\\
	\leq& (n-j+1)\int_{\D_r} \left( \log^2(n-j)+\sum_{\omega: P^{(j)}_n(\omega)=0}  \log_+^2 \left| \frac{1}{z-\omega} \right| \right) dm(z),
	\\
	=&  (n-j+1)\left( \pi r^2 \log^2(n-j)+\int_{\D_r} \sum_{\omega: P^{(j)}_n(\omega)=0} \log_-^2 \left| z-\omega \right| dm(z) \right),
	\end{align*}
	where in the first inequality we have used $\log_+( \sum_{k=1}^n a_k) \le \log n + \sum_{k=1}^n \log_+ a_k$ for any $n \in \N$ and $a_1,\dots,a_n \in \C$, see \cite[Remark 3.2]{Re16b} for instance. By the translation invariance of Lebesgue measure, 
\begin{align}\begin{split}\label{123}
	&\int_{\D_r}\log_-^2|z-\xi|dm(z)=\int_{\D_r+\xi}\log_-^2|z|dm(z) \\ &\le \sup_{K \subset \C} \int_K\log_-^2|z|dm(z)=  \int_{\D_1}\log_-^2|z|dm(z)< \infty.
\end{split}\end{align}
 Therefore, we conclude that $ \left\{ \frac{1}{n^2} \int_{\D_r} \log_+^2 \left| \frac{P_n^{(k)}(z)}{P_n(z)} \right| dm(z)\right\}_{n\ge k}$ is bounded. 
	
	Now we show that the negative part is also bounded. Note that using $\log_-|ab| \le \log_-|a| + \log_-|b|$ for any $a,b \in \C$ we have
	$$
	\frac{1}{n^2}\int_{\D_r}\log^2_-\left| \frac{P_n^{(k)}(z)}{P_n(z)} \right| dm(z) \le \frac{1}{n^2} \int_{\D_r}\log^2_-\left| P_n^{(k)}(z) \right| + \log^2_-\left| \frac{1}{P_n(z)} \right| dm(z). 
	$$
	As in the same way above, we get the following inequality for the first term: 
	$$
	\frac{1}{n^2} \int_{\D_r}\log^2_-\left| P_n^{(k)}(z) \right| dm(z) \le \frac{n-k}{n^2}\int_{\D_r}\sum_{w\in \mathcal Z(P_n^{(k)})} \log_-^2|z-w|dm(z).
	$$
	which is uniformly bounded in $n$ by \eqref{123}. Also, we have
	\begin{align*}
	\frac{1}{n^2}\int_{\D_r} \log^2_-\left| \frac{1}{P_n(z)} \right| dm(z)=& \frac{1}{n^2}\int_{\D_r} \log^2_+\left| P_n(z) \right| dm(z),
	\\
	\leq& \frac{1}{n^2} \int_{\D_r} \left( \sum_{j=1}^n  \log_+\left| z-a_{n,j} \right| \right)^2 dm(z),
	\\
	\leq&  \int_{\D_r} \left( \log 2 +\log_+|z|+\frac{1}{n}\sum_{j=1}^n  \log_+\left|a_{n,j} \right| \right)^2 dm(z).
	\end{align*} 
	Now the lemma follows from the fact that $\{ a_{n,j} \}$ is log-Ces\'{a}ro bounded.
\end{proof}

%\begin{lem}
%Let $Z_1,Z_2,\cdots$ be i.i.d complex-valued random variables distributed according to measure $\mu$. Define $P_n(z)=\prod_{k=1}^n (z-Z_k)$. Then the sequence \linebreak 
%$ \left\{ \frac{1}{n^2} \int_{\D_r} \log^2 \left| \frac{P_n^{(k)}(z)}{P_n(z)} \right| dm(z)\right\}_{n\ge1}$
% is tight.
%\end{lem}

\section{Controlling small values of $L_n^k$.}

\subsection{Proof of Theorem~\ref{iid zero k-th}, ~\ref{Ber zero k-th} and ~\ref{Ber zero array}.}
\hfill
\medskip

In this subsection, we present the proof of Theorem~\ref{iid zero k-th}, ~\ref{Ber zero k-th} and ~\ref{Ber zero array}. 
%Note that our considering random polynomials in these theorems were previously studied in \cite{Ka15,Re16b}. The authors proved that the limiting empirical distribution of zeros and critical points agree using the similar method in this paper. In particular, 
Controlling the small values of $L_n^1$ were given in  \cite[Lemma 2.6.]{Ka15} and \cite[Lemma 3.3]{Re16b}. We will use these lemmas and the induction argument to control the small values of $L_n(z)$ in the corresponding  theorems.

\subsubsection{Proof of Theorem~\ref{iid zero k-th}} 
\hfill
\medskip

Recall that under the conditions of Theorem~\ref{iid zero k-th}, zeros of random polynomials $P_n(z)$ are given by i.i.d. random variables $\{ z_i \}_{i\ge1}$ distributed according to $\mu$. First we introduce the following lemma due to Kabluchko.
% First, we prove the Theorem~\ref{iid zero k-th} in the case that limiting empirical measure of $P_n$ is purely atomic.  
%\begin{lem} 
%	Suppose that $\mu$ is purely atomic probability measure. Then for each $k \in \N$, $\mathcal{M}(P^{(k)}_n) \rightarrow \mu$ in probability.  
%\end{lem}
%\begin{proof}
%For non-negative constants $c_i$ satisfying $\sum_{i=1}^{\infty} c_i=1$, let $\mu=\sum_{i=1}^{\infty} c_i \cdot \delta_{a_i}$, where $a_i$'s are distinct complex numbers. Then we represent $P_n$ as 
%$$
%P_n(z)= \prod_{i=1}^n (z-a_i)^{X_i^n},
%$$
%where 
%$$
%\limsup_{n\rightarrow \infty} \frac{X_n^i}{n}= c_i \quad a.s.
%$$
%Notice that for each $i$ and sufficiently large $n$, $P_n^{(k)}$ has $X_i^n-k$ zeros at $a_i$. Therefore we obtain 
%$$
%\lim_{n \rightarrow \infty} \mathcal{M}(P_n^{(k)})\left( \left\{ a_i \right\} \right) = \lim_{n \rightarrow \infty} \frac{X_i^n-k}{n} = c_i \quad a.s.,
%$$
%which completes the proof.
%\end{proof}

\begin{lem}{\rm \cite[Lemma 2.6]{Ka15}}
	For Lebesgue a.e. $z \in \C$,
	\begin{equation}\label{e:L_n^1:1}
	\lim_{n\rightarrow \infty }\P\left(|L^1_n(z)| < e^{-n \eps}\right) =0
	\end{equation} 
	for any $\eps>0$, where 
	$L^1_n(z):=\frac{P'_n(z)}{P_n(z)}$.
	Here $P_n(z)$'s are random polynomials in Theorem \ref{iid zero k-th}.
\end{lem}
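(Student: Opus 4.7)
The approach I would take is to establish a two-dimensional Esseen-type anti-concentration inequality for $S_n = L_n^1(z) = \sum_{i=1}^n Y_i$, where $Y_i := (z-z_i)^{-1}$ are i.i.d. For Lebesgue-a.e.\ $z \in \C$, one has $\mu(\{z\}) = 0$, so the $Y_i$ are finite $\P$-a.s., and unless $\mu$ is a Dirac mass---in which case $L_n^1(z) = n/(z-w_0)$ and the claim is immediate---the push-forward $\nu_z$ of $\mu$ under $w \mapsto (z-w)^{-1}$ is non-degenerate; moreover, for a.e.\ $z$ the characteristic function $\phi_z$ of $Y_1$ satisfies $|\phi_z(t)| < 1$ for every $t \neq 0$. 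The key inequality to invoke is
\begin{equation*}
\P(|S_n| \leq \lambda) \leq C \lambda^2 \int_{|t| \leq 1/\lambda} |\phi_z(t)|^n \, dt,
\end{equation*}
which I would evaluate at $\lambda = e^{-n\eps}$; the claim reduces to showing the integral on the right is $o(e^{2n\eps})$, and it suffices to establish this for sufficiently small $\eps > 0$ since larger $\eps$ follow by monotonicity of the event in $\lambda$.

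The main technical work is bounding $|\phi_z(t)|$. For $|t|$ bounded away from zero, the strict inequality together with continuity yields $|\phi_z(t)| \leq 1 - \eta_z(\delta) < 1$ on compact annuli $\{\delta \leq |t| \leq R\}$, contributing an exponentially small term; the far tail $|t| \geq R$ requires a separate argument exploiting the fine structure of $\nu_z$ (which is routine for $\mu$ with an absolutely continuous component via Riemann--Lebesgue, and needs extra care in the purely singular case). Near $t = 0$ the naive Taylor estimate $1 - |\phi_z(t)|^2 \asymp \mathrm{Var}(Y_1)|t|^2$ fails because $Y_1$ may have infinite variance. I would circumvent this by truncation: set $Y_i^M := Y_i \mathbf{1}_{|Y_i| \leq M}$, compare $\phi_z$ to the characteristic function $\phi_{z,M}$ of $Y_1^M$ up to an error $2\P(|Y_1| > M) = 2\mu(\overline{B(z, 1/M)})$---which tends to zero as $M \to \infty$ for a.e.\ $z$---and combine with the finite-variance expansion of $\phi_{z,M}$ to obtain $1 - |\phi_z(t)| \gtrsim_z |t|^\gamma$ in a neighborhood of $0$ for some $\gamma > 0$. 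Putting these ingredients together yields $|\phi_z(t)|^n \leq \exp(-c_z n |t|^\gamma)$ in the relevant range, so the integral is at most polynomial in $n$ and the desired conclusion $\P(|S_n| \leq e^{-n\eps}) \to 0$ follows.

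The principal obstacle is the absence of a-priori moment control on $Y_1 = (z-z_1)^{-1}$, whose tails reflect the local behavior of $\mu$ near $z$ and can be arbitrarily heavy (at best Cauchy-like). The quadratic-type bound near $t = 0$ must be tracked jointly with a truncation level $M = M(n)$, and this is what forces the exceptional Lebesgue-null set of $z$: one must exclude points where $\mu$ concentrates so strongly that $\mu(B(z,r))$ fails to decay at a usable rate in $r$. Fortunately the target bound is weak---any decay beating a fixed exponential growth suffices---so crude quantitative versions of the estimates above are enough to close the argument.
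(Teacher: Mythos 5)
Your approach via an Esseen-type anti-concentration inequality is genuinely different from the route taken in the paper. The lemma is cited from Kabluchko \cite{Ka15} rather than reproved, but the paper's own generalization to $L_n^k$, proved immediately afterward (and, by implication, Kabluchko's proof in the $k=1$ case), uses a short \emph{conditioning} argument with no Fourier analysis: single out one sample $z_j$ and write $L_n^1(z) = \tfrac{1}{z-z_j} + L_n^{1,j}(z)$, where the second term is independent of $z_j$; decompose $\mu = \mu_1 + \mu_2$ into its atomic and non-atomic parts; observe that conditionally on $z_j$ landing in the non-atomic part, $\tfrac{1}{z-z_j}$ has a non-atomic law $\nu_z$, so $\sup_{w}\nu_z(B(w,e^{-n\eps})) \to 0$; and note that the event that all of $z_1,\dots,z_\ell$ fall in the atomic support has probability $x^\ell$ with $x=\mu_1(\C)<1$, which is killed by sending $\ell \to \infty$ after $n\to\infty$.

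Beyond being a different route, your sketch contains concrete errors. The assertion that for a.e.\ $z$ one has $|\phi_z(t)|<1$ for every $t\neq 0$ is false once $\mu$ has atoms, which is allowed here since $\mu$ is arbitrary: if $\mu = p\delta_{w_1}+(1-p)\delta_{w_2}$, then $\nu_z$ is a two-point law in $\R^2$ and $|\phi_z(t)|=1$ on the entire line of $t$ orthogonal to $(z-w_1)^{-1}-(z-w_2)^{-1}$. Consequently the claimed pointwise bound $|\phi_z(t)|^n\le\exp(-c_z n|t|^\gamma)$ also fails on that line, and the integral $\int_{|t|\le e^{n\eps}}|\phi_z(t)|^n\,dt$ is \emph{not} polynomial in $n$; one must instead argue that it is $o(e^{2n\eps})$ despite $|\phi_z|^n$ having no decay in some directions, which is a real estimate you would have to supply. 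Likewise, your treatment of the far range $|t|\ge R$ invokes Riemann--Lebesgue, which gives nothing for singular or atomic $\mu$, and you defer that case to ``extra care'' without an argument — but that is precisely the case the conditioning proof is designed to handle via the $\mu_1+\mu_2$ split. A characteristic-function proof can likely be made to work, but it requires a fundamentally different treatment of the atomic component than what you outline, and ends up being considerably heavier than the one-line conditioning bound $\P(|L_n^1(z)|<e^{-n\eps}) \le \ell\,\sup_w\nu_z(B(w,e^{-n\eps})) + x^\ell$.
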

Recall that $L_n^k$ is given as \eqref{L_n^k} for each $k \le n$. Now, to complete the proof of Theorem~\ref{iid zero k-th}, all we need to show is the following lemma. 
\begin{lem}
	For Lebesgue a.e. $z \in \C$,
	\begin{equation}\label{e:L_n^k:1}
	\lim_{n\rightarrow \infty }\P\left(|L^k_n(z)| < e^{-n \eps}\right) =0
	\end{equation} 
	for every $k \in \N$ and $\eps>0$, where $L^k_n(z):=\frac{1}{k!} \frac{P^{(k)}_n(z)}{P_n(z)}$.
	Here $P_n(z)$'s are random polynomials in Theorem \ref{iid zero k-th}.
\end{lem}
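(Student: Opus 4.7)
The plan is to proceed by induction on $k$; the base case $k=1$ is the Kabluchko lemma quoted immediately above. Fix $k\ge 2$ and assume the conclusion for $k-1$ (with any choice of auxiliary parameter). The key algebraic input is the recurrence
\[
L_n^k(z)\;=\;L_{n-1}^k(z)\;+\;\frac{L_{n-1}^{k-1}(z)}{z-z_n},
\]
obtained from the elementary symmetric identity $e_k(y_1,\dots,y_n)=e_k(y_1,\dots,y_{n-1})+y_n\,e_{k-1}(y_1,\dots,y_{n-1})$ with $y_i=1/(z-z_i)$. Writing $A:=L_{n-1}^k(z)$ and $B:=L_{n-1}^{k-1}(z)$, both are measurable with respect to $\mathcal F_{n-1}:=\sigma(z_1,\dots,z_{n-1})$, while $z_n$ is independent of $\mathcal F_{n-1}$ with law $\mu$.

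Fix an auxiliary $\eta\in(0,\eps/10)$. The inductive hypothesis applied to $B$ yields $\P(|B|<e^{-n\eta})\to 0$, and Lemma~\ref{A1 Re16b} applied to $\widetilde L^{k-1}_{n-1}$ and $\widetilde L^{k}_{n-1}$ yields $|A|,|B|\le e^{n\eta}$ outside a probability-$o(1)$ event. I restrict to the $\mathcal F_{n-1}$-event $\Omega_n:=\{e^{-n\eta}\le|B|\le e^{n\eta},\ |A|\le e^{n\eta}\}$, condition on $\mathcal F_{n-1}$, and study, in the variable $w:=z-z_n$, the Apollonius-type event $|Aw+B|<e^{-n\eps}|w|$. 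A case split on $|A|$ is natural: if $|A|\ge e^{-n\eta}$, setting $w^\ast:=-B/A$ and $\alpha:=e^{-n\eps}/|A|\le e^{-n\eps+n\eta}\ll 1$, the event becomes $|w-w^\ast|\le\alpha|w|$, which is a disk of radius $\alpha|w^\ast|/(1-\alpha^2)=O(e^{-n\eps+3n\eta})$; if $|A|<e^{-n\eta}$, the estimate $|Aw+B|\ge|B|-|A||w|$ forces $|w|\gtrsim|B|/(|A|+e^{-n\eps})\gtrsim e^{n\eps/2}$, so $|z_n|$ must diverge exponentially.

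The second sub-case contributes probability $\P(|z_n|\gtrsim e^{n\eps/2})\to 0$ by tightness of $\mu$; the first contributes $\mu$ of an exponentially small disk centered at the random point $z+B/A$. The main obstacle is this last small-disk estimate when $\mu$ has atoms, because the random center could in principle coincide with an atom of $\mu$ and produce a positive-probability barrier. I plan to resolve this via the decomposition $\mu=\mu_{\mathrm{at}}+\mu_{\mathrm{cont}}$: for the continuous part, $\mu_{\mathrm{cont}}(B(z_0,r))\to 0$ as $r\to 0$, which is promoted to a statement uniform in the random center for Lebesgue a.e.\ $z$ by a Fubini integration argument; for the purely atomic part, a direct multinomial law-of-large-numbers computation on the counts $N_{n,j}:=\#\{i\le n:z_i=a_j\}$ shows $|L^k_n(z)|\asymp n^k$ for Lebesgue a.e.\ $z$, trivially exceeding $e^{-n\eps}$, and a standard partition of $\{1,\dots,n\}$ by the source part of each $z_i$ combines the two regimes.
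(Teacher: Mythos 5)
Your high-level plan (induction on $k$ via the recurrence $L_n^k = L_{n-1}^k + L_{n-1}^{k-1}/(z-z_n)$, conditioning, and a small-disk estimate) mirrors the paper, but your Case~2 estimate is wrong. On $\Omega_n$ you have $|B|\ge e^{-n\eta}$, and if $|A|<e^{-n\eta}$ then $|w|\ge |B|/(|A|+e^{-n\eps}) \ge e^{-n\eta}/(2e^{-n\eta}) = 1/2$, since for $\eta<\eps$ the denominator is at most $2e^{-n\eta}$. This only gives $|w|\ge 1/2$, not $|w|\gtrsim e^{n\eps/2}$, so tightness of $\mu$ does not apply and Case~2 contributes an $O(1)$ probability. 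The root of the trouble is the case split on $|A|$, which is both wrong and unnecessary: the paper simply divides $|A+B/w|<e^{-n\eps}$ by $B$ (the quantity the inductive hypothesis actually controls) to get $\bigl|\tfrac{1}{z-z_n}+\tfrac{A}{B}\bigr|<e^{-n\eps}/|B|\le e^{-n\eps+n\eta}$, which places $1/(z-z_n)$ in an exponentially small disk with no hypothesis on $|A|$ at all. One then works with the law $\nu_z$ of $1/(z-z_n)$ directly, sidestepping the delicate inversion of disks back to $z_n$-space, where (as you half-noticed) the image disk can be huge whenever the center $-A/B$ is close to $0$.

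Your treatment of atoms is also not yet an argument. When $\mu$ has both atomic and non-atomic parts, the small-disk estimate fails precisely when the random center $-A/B$ sits on an atom of $\nu_z$, and the purely-atomic LLN computation does not patch this: $L_n^k$ entangles the two parts, and ``partitioning $\{1,\dots,n\}$ by source part'' is a slogan, not an estimate. The paper's resolution is concrete: write $\mu = \mu_1 + \mu_2$ with $E = \operatorname{supp}\mu_1$ and $x := \mu_1(\C) < 1$, restrict to the event $\{z_j \notin E\}$ so that the conditional law $\nu_z$ of $\tfrac{1}{z-z_j}\mathbf{1}_{\{z_j\notin E\}}$ is genuinely non-atomic (hence $\sup_{z_0}\nu_z(B(z_0,r))\to 0$), and — crucially — apply the one-zero-out decomposition at $\ell$ distinct indices $j=1,\dots,\ell$, so that the residual bad event ``all $\ell$ chosen $z_j$ lie in $E$'' has probability exactly $x^\ell$, which is sent to $0$ by letting $\ell\to\infty$ at the end. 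Without an analogue of this multi-index scheme your outline leaves a fixed positive failure probability whenever $\mu$ has atoms, and the conclusion as stated for arbitrary $\mu$ is out of reach.
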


\begin{proof}
	When $\mu$ is purely atomic measure, the proof of \eqref{e:L_n^k:1} is straightforward so without loss of generality we can assume that $\mu$ is not purely atomic. Note that in the case $k=1$, \eqref{e:L_n^k:1} follows from \cite[Lemma 2.6.]{Ka15}. Suppose that \eqref{e:L_n^k:1} holds for some $k \ge 1$. First, we decompose $\mu$ by $\mu_1+\mu_2$ such that $\mu_1$ is purely atomic, and $\mu_2$ is non-atomic. Then $0 \le x:=\mu_1(\C)<1$ since $\mu$ is not purely atomic. Let $E:=supp(\mu_1)$.
	For $j,m<n$, define a random variable $L^{k,j}_n(z)$ by 
	$$ L^{m,j}_n(z):=\sum_{1\le i_1 < \dots < i_m \le n, i_\ell \neq j \forall \ell} \frac{1}{z-z_{i_1}} \dots \frac{1}{z-z_{i_m}}.$$
	Note that $L^{m,j}_n(z)$ is independent of $z_j$. 
	Fix $z \in \C$ which satisfies \eqref{e:L_n^k:1} for every $k \le k_0$ and $\eps>0$. First, for any $\ell \in \N$, by definition of $L_n^{m,j}(z)$ we have
	\begin{equation}\label{1}
	L^{k_0+1}_{n+\ell}(z)=\frac{1}{z-z_j} L^{k_0,j}_{n+\ell}(z) + L^{k_0+1,j}_{n+\ell}(z) \quad \mbox{for all} \quad 1\le j \le \ell. 
	\end{equation}
	For $1\le j \le \ell$, let $\Omega$ be the sample space and denote
	\begin{align*}
		A_j&:= \{\omega \in \Omega : |L^{k_0,j}_{n+\ell}(z)| < e^{-n\eps} \}; \\
	B_j&:= \{\omega \in \Omega : z_j \notin E \}; \\
	C&:= \{\omega \in \Omega : |L^{k_0+1}_{n+\ell}(z)| < e^{-2n\eps}   \},
	\end{align*}
	for fixed $\eps>0$. Note that 	
	$$ \displaystyle \Omega =  \left(\cup_{j=1}^\ell A_j \right) \cup \left(\cup_{j=1}^\ell(A^c_j \cap B_j)\right) \cup \left(\cap_{j=1}^\ell B_j^c\right), $$
	which implies
	\begin{equation*} C \subset \big(\cup_{j=1}^\ell A_j \big) \cup \big(\cup_{j=1}^\ell(A^c_j \cap B_j \cap C)\big) \cup \big(\cap_{j=1}^\ell B_j^c\big). 
	\end{equation*}
	Therefore, we immediately obtain
	\begin{equation}\label{2}
	\displaystyle \P(C)\le \displaystyle \sum_{j=1}^\ell \P(A_j)+\sum_{j=1}^\ell \P(A_j^c \cap B_j \cap C) + \P(\cap_{j=1}^\ell B_j^c).
	\end{equation}
	Note that, since $L^{k_0,j}_{n+\ell}(z)$ and $L^{k_0}_{n+\ell-1}(z)$ are identically distributed, we have 
	\begin{equation}\label{3}
	\P(A_j) = \P\left(|L^{k_0}_{n+\ell-1}(z)|<e^{-n\eps}\right) \quad \mbox{for all} \quad 1 \le j \le \ell.
	\end{equation}
	which converge to $0$ as $n$ goes to $\infty$, for all $1 \le j \le \ell$ by the induction hypothesis. For the second term, using \eqref{1}, we get 
	\begin{align} \begin{split}
	&\P(A_j^c \cap B_j \cap C) 
	\\
	=& \P \left(z_j \notin E, \left|L^{k_0,j}_{n+\ell}(z)\right| \ge e^{-n\eps}, \left|\frac{1}{z-z_j} L^{k_0,j}_{n+\ell}(z) +L^{k_0+1,j}_{n+\ell}(z)\right|<e^{-2n\eps} \right),
	\\ 
	=& \P \left (z_j \notin E, \left|L^{k_0,j}_{n+\ell}(z) \right| \ge e^{-n\eps}, \left|\frac{1}{z-z_j} +\frac{L^{k_0+1,j}_{n+\ell}(z)}{ L^{k_0,j}_{n+\ell}(z)}\right|<\frac{e^{-2n\eps}}{ L^{k_0,j}_{n+\ell}(z)} \right) ,
	\\
	\le& \P \left(z_j \notin E, \left|\frac{1}{z-z_j} +\frac{L^{k_0+1,j}_{n+\ell}(z)}{ L^{k_0,j}_{n+\ell}(z)}\right| < e^{-n\eps} \right). \label{4} 
	\end{split}
	\end{align}
	Let $\nu_z(dw)$ be a distribution of $\frac{1}{z-z_1} 1_{\{z_1 \notin E\}}$. Then since $\nu_z(dw)$ is non-atomic measure, we have 
	\begin{equation}\label{5}
	\lim_{r \downarrow 0} \sup_{z_0 \in \C} \nu_z(B(z_0,r)) =0.
	\end{equation}
	Using \eqref{5} to \eqref{4}, we obtain 
	\begin{align*}
		\P(A_j^c \cap B_j \cap C) &\le \P \left(z_j \notin E, \left|\frac{1}{z-z_j} +\frac{L^{k_0+1,j}_{n+\ell}(z)}{ L^{k_0,j}_{n+\ell}(z)}\right| < e^{-n\eps}\right), 
	\\
	&= \E\left[\nu_z\left(B \big(\frac{L^{k_0+1,j}_{n+\ell}(z)}{ L^{k_0,j}_{n+\ell}(z)},e^{-n\eps} \big) \right) \right], 
	\\
	&\le \sup_{z_0 \in \C} \nu_z(B(z_0,e^{-n\eps})).
	\end{align*}
	Also, since $z_i$ are i.i.d. we have
	\begin{equation}\label{6}
	\P(\cap_{j=1}^\ell B_j^c) = \prod_{j=1}^\ell \P(z_j \in E) = x^\ell.
	\end{equation}
	Combining all \eqref{3}, \eqref{5}, \eqref{6} and \eqref{2}, we obtain
	$$ 
	\limsup_{n\rightarrow \infty}\P(|L^{k_0+1}_n(z)|<e^{-2n\eps})= \limsup_{n \rightarrow \infty} \P(C) \le x^\ell. 
	$$
	for arbitrarily $\ell \in \N$. Now, \eqref{1} for $k=k_0+1$ follows from the fact that $x<1$. This finishes the proof.
\end{proof}

\subsubsection{Proof of Theorem~\ref{Ber zero k-th} and Theorem~\ref{Ber zero array}} 
\hfill
\medskip

In this section we will prove Theorem~\ref{Ber zero k-th} and Theorem~\ref{Ber zero array} at once. Let $\{a_{n,i}\}$ and $\{b_{n,i}\}$ be two $\mu$-distributed log-Ces\'aro bounded triangular arrays of complex numbers. Here we assumed in Theorem~\ref{Ber zero array} that $\sum_{i=1}^n \log_+ \frac{1}{|a_{n,i}-b_{n,i}|} = o(n^2)$. Note that this condition is used only for the case $k=1$. Therefore it suffices to prove Theorem~\ref{Ber zero array} and the proof of Theorem~\ref{Ber zero k-th} follows from similar argument. Recall that zeros of random polynomials $P_n(z)$ are given by the random sequence $\xi_{n,i}$ where $\xi_{n,i}=a_{n,i}$ or $b_{n,i}$ with equal probability. \\

Now following lemma completes the proof of Theorem~\ref{Ber zero array}. 
\begin{lem}\label{l:two2}
	For a.e. $z \in \C$,		
	\begin{equation}\label{e:two2}
	\lim_{n\rightarrow \infty }\P \left( \left|L^k_n(z)\right| < e^{-n \eps} \right) =0
	\end{equation} 
	for all $k \in \mathbb{N}$ and $\eps>0$, where $L^k_n(z):=\frac{1}{k!} \frac{P^{(k)}_n(z)}{P_n(z)}$.
	Here $P_n(z)$'s are random polynomials in Theorem~\ref{Ber zero array}.
\end{lem}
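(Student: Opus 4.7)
I would prove Lemma~\ref{l:two2} by induction on $k$, paralleling the proof of Theorem~\ref{iid zero k-th} given above. The base case $k=1$ is precisely what \cite[Lemma~3.3]{Re16b} supplies in the Bernoulli setting, and it is the unique step that invokes either the $o(n^2)$ hypothesis of Theorem~\ref{Ber zero array} or the ``$a_i\neq b_i$ infinitely often'' hypothesis of Theorem~\ref{Ber zero k-th}. Fix $k_0\ge 1$ and assume inductively that \eqref{e:two2} holds for every $\mu$-distributed log-Ces\'aro bounded triangular array and every $k\le k_0$.

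The inductive step starts from the algebraic identity
\[
L_n^{k_0+1}(z)=\tfrac{1}{z-\xi_{n,j}}\,L_n^{k_0,j}(z)+L_n^{k_0+1,j}(z),\qquad j=1,\dots,n,
\]
where $L_n^{m,j}$ is the obvious analogue of \eqref{L_n^k} with the $j$-th index omitted. I would then choose $\ell:=k_0+1$ \emph{good} indices $j_1<\cdots<j_\ell$ in $\{1,\dots,n\}$ at which the anti-concentration parameter $\beta_m:=\tfrac12\bigl(\tfrac{1}{z-a_{n,j_m}}-\tfrac{1}{z-b_{n,j_m}}\bigr)$ satisfies $|\beta_m|\ge e^{-n\eps/(8(k_0+1))}$. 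For Theorem~\ref{Ber zero k-th} one simply takes $\ell$ fixed indices with $a_i\neq b_i$, for which $|\beta_m|$ is bounded below uniformly in $n$; for Theorem~\ref{Ber zero array} the $o(n^2)$ hypothesis forces at most $o(n)$ indices to violate the required lower bound on $|a_{n,i}-b_{n,i}|$, while log-Ces\'aro boundedness controls $|z-a_{n,i}||z-b_{n,i}|$ along a subset of density one, so plenty of good indices exist. Set $\mathcal{G}:=\sigma(\xi_{n,i}:i\notin\{j_1,\dots,j_\ell\})$ and $A_m:=\{|L_n^{k_0,j_m}(z)|<e^{-n\eps}\}$. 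Since the triangular array with the $j_m$-th column removed remains $\mu$-distributed and log-Ces\'aro bounded, the induction hypothesis applied to it yields $\P(A_m)\to 0$, so $\sum_{m=1}^{\ell}\P(A_m)\to 0$ and it remains to bound $\P\bigl(C\cap\bigcap_m A_m^c\bigr)$ with $C:=\{|L_n^{k_0+1}(z)|<e^{-2n\eps}\}$.

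Conditionally on $\mathcal{G}$, writing $\tfrac{1}{z-\xi_{n,j_m}}=\alpha_m+\epsilon_m\beta_m$ with $\epsilon_m\in\{\pm 1\}$ encoding $\xi_{n,j_m}\in\{a_{n,j_m},b_{n,j_m}\}$ and expanding exhibits $L_n^{k_0+1}(z)$ as a multi-affine polynomial of degree exactly $\ell=k_0+1$ in the independent Rademacher signs $(\epsilon_1,\dots,\epsilon_\ell)$, whose top-degree coefficient equals $\prod_{m=1}^\ell\beta_m$. Consequently $\E\!\bigl[|L_n^{k_0+1}(z)|^2\mid\mathcal{G}\bigr]\ge\bigl(\prod_m|\beta_m|\bigr)^{2}\ge e^{-n\eps/4}$. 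Invoking a Carbery-Wright-type small-ball inequality for multilinear polynomials of fixed degree in independent Rademacher variables---after separating real and imaginary parts to accommodate the complex-valued polynomial---gives
\[
\P\!\left(|L_n^{k_0+1}(z)|<e^{-2n\eps}\,\big|\,\mathcal{G}\right)\le C_{k_0+1}\!\left(\tfrac{e^{-2n\eps}}{e^{-n\eps/8}}\right)^{\!1/(k_0+1)}=C_{k_0+1}\,e^{-\frac{15n\eps}{8(k_0+1)}}\longrightarrow 0.
\]
Integrating in $\mathcal{G}$ and combining with $\sum_m\P(A_m)\to 0$ closes the inductive step (the arbitrariness of $\eps>0$ allows the $2\eps$ threshold to be replaced by any positive constant).

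\textbf{Main obstacle.} The crux is the anti-concentration estimate above: the multi-affine polynomial has random coefficients (functions of $\mathcal{G}$), is complex-valued, and the target scale $e^{-2n\eps}$ is exponentially small, so a quantitative Carbery-Wright/Meka-Nguyen-Vu-type small-ball bound uniform in the coefficients is required. The match $\ell=k_0+1$ between the number of distinguished indices and the polynomial's degree is essential: a smaller $\ell$ leaves the polynomial of sub-maximal degree and loses control of the leading coefficient, while a larger $\ell$ forces $\prod_m|\beta_m|$ to be too small for the bound to remain useful. In particular a single-coordinate flip argument only gives the trivial bound $\tfrac{1}{2}$, so the full multi-affine structure in $(\epsilon_1,\dots,\epsilon_\ell)$ must be exploited.
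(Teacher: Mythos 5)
Your overall architecture (induction on $k$, peeling off $\ell$ distinguished random columns, decoupling via the identity $L_n^{k_0+1}=\tfrac{1}{z-\xi_{n,j}}L_n^{k_0,j}+L_n^{k_0+1,j}$) matches the paper's starting point, but the inductive step has a genuine gap at the anti-concentration estimate, and it is fatal.

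After conditioning on $\mathcal{G}=\sigma(\xi_{n,i}:i\notin\{j_1,\dots,j_\ell\})$ with $\ell=k_0+1$ \emph{fixed}, the quantity $L_n^{k_0+1}(z)$ is a deterministic function of only $\ell$ Rademacher signs, hence takes at most $2^{k_0+1}$ values --- a finite number independent of $n$. No small-ball inequality can push the conditional probability of any single value below $2^{-\ell}$, so the claimed bound $\P\bigl(|L_n^{k_0+1}(z)|<e^{-2n\eps}\mid\mathcal{G}\bigr)\le C_{k_0+1}\,e^{-\frac{15n\eps}{8(k_0+1)}}$ simply cannot hold in general. The Carbery--Wright inequality you invoke is specific to Gaussian (or log-concave) laws; there is no Rademacher analogue at this strength. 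Indeed, a multi-affine polynomial in $\ell$ Rademacher signs with leading Fourier coefficient $\prod_m\beta_m$ can vanish on $2^\ell-1$ of the $2^\ell$ configurations (take $p(\epsilon)=c\,\mathbf{1}_{\{\epsilon=\epsilon^0\}}$; its top Fourier coefficient is $c\,2^{-\ell}\prod_m\epsilon^0_m\neq 0$), which is perfectly consistent with the Parseval lower bound $\E[|p|^2\mid\mathcal{G}]\ge(\prod_m|\beta_m|)^2$ that you establish but then gives $\P(p=0\mid\mathcal{G})=1-2^{-\ell}$. Even the degree-one discrete analogue (Littlewood--Offord) only yields $O(\ell^{-1/2})$, which does not go to zero for bounded $\ell$.

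The paper's proof resolves exactly this obstruction by \emph{not} trying to beat $2^{-\ell}$. It accepts the loss $\P(\mathcal{N}_n\setminus\mathcal{N}_n^\ell)\le 2^{-\ell}$ for the ``unpaired'' samples, and for each $\xi_n\in\mathcal{N}_n^\ell$ locates a twin $\eta_n\in\mathcal{N}_n$ agreeing outside the first $\ell$ coordinates, then analyses the difference $L_n^k(\xi_n)-L_n^k(\eta_n)$. This difference is written as $\sum_{p\ge 1}(\alpha_p-\beta_p)L_n^{k-p,F}(\xi_n)$, where at least one $\alpha_p\neq\beta_p$; crucially, the difference polynomial $f(w)=\sum_{p=m_1}^{m_0}(\alpha_p-\beta_p)w^{m_0-p}$ is then \emph{factored} and its roots $\gamma_i=z+1/\kappa_i$ are appended as new deterministic entries, so that the whole expression collapses to $L_{n-m+q}^{k-m_1}(z)(\nu_n)$ for a \emph{new} random triangular array $\nu_n$. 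Since $m_1\ge 1$, the order drops and the induction hypothesis (applied to the new array, after excluding the countable set $\mathcal{Z}_{k}$ of bad $z$'s accumulated over all $n,\ell$ and all $\le 2^{2\ell}$ possible $\nu_n$) applies. Letting $\ell\to\infty$ at the end kills the $2^{-\ell}$ term. The two key ideas you are missing are (i) allow $\ell$ to be arbitrary and pay $2^{-\ell}$ rather than seeking anti-concentration at a fixed $\ell$, and (ii) transfer the difference of two nearby $L_n^k$'s into a single $L_{n'}^{k'}$ of strictly lower order $k'<k$ via the root-splitting of $f$, so the induction hypothesis does all the quantitative work.
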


\begin{proof}
	Note that the case $k=1$ is given by \cite[Lemma 4.3 and Lemma 4.4]{Re16b}. In particular, for the proof of \cite[Lemma 4.4]{Re16b} the author has loosened the conditions $\sum_{i=1}^n \log_+ \frac{1}{|a_{n,i}-b_{n,i}|} = o(n^2)$ to the following condition : 
	\begin{equation}\label{e:cond}
	\lim_{\eps \downarrow 0} \liminf_{n \to \infty }\frac{ |\{ i: \log_+ \frac{1}{|a_{n,i}-b_{n,i}|} \le \eps n  \}|}{n} \ge \frac{3}{4}.
	\end{equation}
	Also, \eqref{e:two2} holds for any $z \in \C$ that does not agree with any $a_{n,i}$ or $b_{n,i}$ in the triangular array. 
	
	We will prove Lemma \ref{l:two2} by using induction argument on $k$. 
	%Suppose that \eqref{e:two2} holds in the case $k=1,2,\dots,k_0-1$, i.e., for every $k \le k_0 -1$ and triangular array $\{a_{n,i}\}$ and $\{b_{n,i}\}$ satisfying \eqref{e:cond}, \eqref{e:two2} holds for any $\eps>0$ and a.e. $z \in \C$. 
	Let $$\Omega= \left \{ \xi_n=\{\xi_{n,i}\}_{i\leq n} \, | \, \xi_{n,i} = a_{n,i} \mbox{ or } b_{n,i}, \, n \in \N   \right \}
	$$ be the sample space of all possible finite sequences $\{\xi_{n,i}\}_{i\leq n}$ equipped with uniform probability measure $\P$. By \eqref{e:cond}, we have
	$$ |\{ i: a_{n,i} \neq b_{n,i}, i \le n \}| \to \infty \quad \mbox{as} \quad n \to \infty. $$
Thus, for any $\ell \in \N$, we may assume that $a_{n,i} \neq b_{n,i}$ for $1 \le i \le \ell$ for large $n$, since finite permutations on sequences does not affect \eqref{e:two2}.
	
	Fix $z \in \C$, $\ell,k \in \N$, and let $\mathcal{N}_n=\mathcal{N}_n(z)$ be a set of finite sequence $\xi_n=\{\xi_{n,i}\}_{i \le n}$ such that
	$$\mathcal{N}_n:= \left \{ \xi_n \in \Omega \, |\, \  \,  \left|L^{k}_n(z) \right| < e^{-n\eps}  \right \}. 
	$$
	Note that for any $n \in \N$ and $z \in \C$, the value of $L_n^{k}(z)$ depends only on $\xi_n$. Set
	\begin{equation}\label{e:7}
	\mathcal{N}_n^\ell := \left\{  \xi_n \in \mathcal{N}_{n} \, |\, \exists \eta_n \in \mathcal{N}_n \mbox{ such that }\eta_n \neq \xi_n, \eta_{n,i} = \xi_{n,i} \mbox{ for all } l < i \le \ell    \right\}.
	\end{equation}
	Note that if there exist two sequences $\xi_n, \ti{\xi}_n$ in $\mathcal{N}_n \backslash \mathcal{N}^\ell_n$ with $\xi_{n,i}=\ti{\xi}_{n,i}$ for $1 \le i \le \ell$, then $\xi_n=\ti{\xi}_n$ by the construction. Therefore for each given sequence of tails $\{\xi_{n,i} \}_{\ell <i \le n}$, there is at most one sample contained in $\mathcal{N}_n \backslash \mathcal{N}^\ell_n$, which implies 
	\begin{equation}\label{e:11}
	\P(\mathcal{N}_n \backslash \mathcal{N}^\ell_n) \le 2^{-\ell}.
	\end{equation} 
	For any $n \in \N$ and $\xi_n \in \mathcal{N}^\ell_n$, using the definition we can find a sequence $\eta_n \in \mathcal{N}^\ell_n$ such that $\eta_n \neq \xi_n$ and $\eta_{n,i} = \xi_{n,i}$ for all $l<i \le \ell$. Note that since $\xi_n, \, \eta_n \in \mathcal{N}_n$, 
	\begin{equation}\label{e:3}
	\left|L^k_n(z) (\xi_n ) \right|<e^{-n\eps}, \quad \quad \left|L^k_n(z)(\eta_n ) \right|<e^{-n\eps}.
	\end{equation}
	Let $F=\{ i_j \}_{1 \le j \le m}$ be the set of index such that $\xi_{n,i_j} \neq \eta_{n,i_j}$. Without loss of generality, we may assume that $1 \le i_1<i_2<...<i_m \le \ell$. 
	For $1\le p \le m$, let us denote  
	$$\alpha_p:= \sum_{1 \le j_1<\dots<j_p \le m} \frac{1}{z-\xi_{n,i_{j_1}}}\dots\frac{1}{z-\xi_{n,i_{j_p}}}, \quad \beta_p:=\sum_{1 \le j_1<\dots <j_p \le m} \frac{1}{z-\eta_{n,i_{j_1}}}\dots\frac{1}{z-\eta_{n,i_{j_p}}}, 
	$$ 
	and $\alpha_0=\beta_0=1$. 
	Note that, by construction, 
	$$
	\displaystyle \sum_{p=0}^m \alpha_p w^{m-p}=\prod_{j=1}^m \left(w+ \frac{1}{z-\xi_{n,i_j}}\right), \quad  \sum_{p=0}^m \beta_p w^{m-p}=\prod_{j=1}^m \left(w+ \frac{1}{z-\eta_{n,i_j}}\right), 
	$$
	which implies that two polynomials  $\sum_{p=0}^m \alpha_p w^{m-p}$ and  $\sum_{p=0}^m \beta_p w^{m-p}$ cannot be the same as polynomials in `$w$'. Therefore, $\alpha_p \neq \beta_p$ for at least one of $1 \le p \le m$. 
	
	For each $k_1 \in \N$ and $E \subset \{1,2,\dots,n\}$, let us define $L^{k_1,E}_n(z)$ by 
	$$L^{k_1,E}_n(z)=L^{k_1,E}_n(z)(\xi_n) :=\sum_{1\le i_1 < \dots < i_{k_1} \le n, i_j \notin E\, \forall j} \frac{1}{z-\xi_{n,i_1}} \dots \frac{1}{z-\xi_{n,i_k}}.
	$$
	Note that $L^{k_1,E}_n(z)$ is independent of $\{\xi_{n,i}\}_{i \in E}$ by the independence of $\xi_{n,i}$. Also by definition of $\alpha_p$, $\beta_p$ and $F$, we have the following decomposition of $L^{k}_n(z)$:
	\begin{equation}
	\label{e:4}
	L^{k}_n(z)(\xi_n )=\sum_{p=0}^m \alpha_p L^{k-p,F}_{n}(z)(\xi_n ), \quad L^{k}_n(z)(\eta_n )=\sum_{p=0}^m \beta_p L^{k-p,F}_{n}(z)(\eta_n ).
	\end{equation}
	Notice that since $\xi_{n,i} = \eta_{n,i}$ for all $i \notin F$, $L^{k-p,F}_{n}(z)(\xi_n)= L^{k-p,F}_{n}(z)(\eta_n)$ for all $0 \le p \le m$. Therefore by \eqref{e:3} and \eqref{e:4}, we get
	\begin{align}
	\begin{split}\label{e:5}
	2e^{-n\eps} &\quad \ge\quad  \left |L_n^{k}(z)(\xi_n)-L_n^{k}(z)(\eta_n) \right|, \\
	&\quad = \quad \left |\sum_{p=0}^m \alpha_p L^{k-p,F}_{n}(z)(\xi_n) - \sum_{p=0}^m \beta_p L^{k-p,F}_{n}(z)(\eta_n)\right|,  \\
	&\quad = \quad  \left |\sum_{p=1}^m (\alpha_p- \beta_p) L^{k-p,F}_{n}(z)(\xi_n) \right|,
	\end{split}\end{align}
	for sufficiently large $n$.
	
	Let $m_0:= \max\{1 \le p \le m: \alpha_p \neq \beta_p \}$, $m_1:= \min\{1 \le p \le m: \alpha_p \neq \beta_p \}$ and $\kappa_1,\dots,\kappa_q$ be zeros of polynomial 
	$$ f(w):=\sum_{p=m_1}^{m_0} (\alpha_p - \beta_p)w^{m_0 - p}. $$ 
	Note that $q \le m-1$ since $q=\deg f = m_0 - m_1 \le m-1$. Also the fact that $f(0)= \alpha_{m_0} - \beta_{m_0} \neq 0$ implies $\kappa_i \neq 0$ for every $1 \le i \le q$. Let $\gamma_i= z+ \frac{1}{\kappa_i}$ for each $1 \le i \le q$. Then we obtain
	\begin{equation}
	f(w)= \prod_{i=1}^q \left(w+\frac{1}{z-\gamma_i}\right), \quad \alpha_p-\beta_p= L_q^p(\{\gamma_i\}_{i \le q}). 
	\end{equation}
	Now let us define a random sequence $\{\nu_{n,i}\}_{i \le n-m+q}$ by 
	\begin{equation}\label{d:nu}
	\nu_{n,i} =\begin{cases} \gamma_i &\mbox{for } \quad 1 \le i \le q \\ 
	\xi'_{i-q} & \mbox{for } \quad q < i \le n-m+q,
	\end{cases}
	\end{equation}
	where the random sequence $\{\xi'_i\}_{1 \le i \le n-m}$ is constructed as follows: 
	\begin{itemize}
		\item For $1 \le i \le \ell-m$, by shortening the sequence $\{\xi_i\}_{i \in \{1,\dots,\ell\}} $ to $\{\xi_i\}_{i \in \{1,\dots,\ell\} \backslash F}$; 
		\item For $i > \ell-m$, pick $a_{n,i+m}$ or $b_{n,i+m}$ with equal probability.  
	\end{itemize} 
	Combining all above, we obtain
	\begin{align}
	\begin{split}\label{e:6}
	\left|\sum_{p=1}^m (\alpha_p-\beta_p) L^{k_0-p,F}_{n}(z) \left(\xi_n \right) \right|
	&\quad =\quad  \left|\sum_{p=m_1}^{m_0} (\alpha_p-\beta_p) L^{k_0-p,F}_{n}(z) \left(\{\xi_{n,i}\}_{i \le n} \right) \right| ,
	\\
	&\quad =\quad \left|\sum_{p=0}^q L_q^p (z)\left(\{\gamma_i\}_{i \le q}\right) L_n^{k-m_1-p,F}(z)\left(\{\xi_{n,i}\}_{i \le n} \right)\right| ,
 \\
	&\quad =\quad \left|\sum_{p=0}^q L_q^p (z)\left(\{\gamma_i\}_{i \le q}\right) L_{n-m}^{k-m_1-p}(z)(\{\xi'_i\}_{i \le n-m}) \right|,
 \\
	&\quad =\quad \left|L^{k-m_1} _{n-m+q}(z)\left(\{\nu_{n,i}\}\right) \right|. 
	\end{split}
	\end{align}
	Note that since $\ell$ is fixed number, there are only finite cases of $\{\nu_{n,i}\}_{i \le n-m+q}$. More precisely, since the deterministic part of $\{\nu_{n,i}\}_{i \le n-m+q}$ is determined by the values of $\xi_{n,1},\dots,\xi_{n,\ell}$ and $\eta_{n,1},\dots,\eta_{n,\ell}$, the number of cases is less than $2^{2\ell}$ for each $n$. Now we inductively define $\mathcal{Z}_{k,n,\ell}$ and $C_{k,n,\ell}$ as follows.
	\begin{enumerate}[(i)]
		\item First we set 
		$$C_{1,n,\ell}:= \{ \{(a_{n,i},b_{n,i}) \}_{i \le n}  \}\quad \mbox{and} \quad \mathcal{Z}_{1,n,\ell}:= \{a_{n,i},b_{n,i} : \{(a_{n,i},b_{n,i}) \}_{i \le n} \in C_{1,n,\ell} \}.$$ 
		\item We denote by $C_{2,n,\ell}$ the collection of $ \{\nu_{n,i}\}$, where $\nu$ is a sequence of 2-vector defined by \eqref{d:nu} with the sequence of 2-vector $\xi \in C_{1,n,\ell} $ and set 
		$$\mathcal{Z}_{2,n,\ell}:= \mathcal{Z}_{1,n,\ell} \cup \{ \nu^1_{n,i}, \nu^2_{n,i} : \{\nu_{n,i}\} = \{ (\nu^1_{n,i}, \nu^2_{n,i})\} \in C_{1,n,\ell}    \}. $$
		\item Finally, for any $k \ge 1$, we define $C_{k+1,n,\ell}$ as the collection of $ \{\nu_{n,i}\}$, where $\nu$ is a sequence of 2-vector defined by \eqref{d:nu} with the sequence of 2-vector $\xi \in C_{k,n,\ell} $ and set 
		$$\mathcal{Z}_{k+1,n,\ell}:=\mathcal{Z}_{k,n,\ell} \cup \{ \nu^1_{n,i}, \nu^2_{n,i} : \{\nu_{n,i}\} = \{ (\nu^1_{n,i}, \nu^2_{n,i})\} \in C_{k+1,n,\ell}    \}. $$
	\end{enumerate} 
	Note that for each $k,n,\ell \in \N$, $|C_{k,n,\ell}| \le 2^{2k\ell}$ since there is at most $2^{2 \ell}$ choice of $\{\nu_{n,i}  \}$ for each sequence of 2-vector. Thus, $Z_{k,n,\ell}$ is also finite set. Let
	$$ \mathcal{Z}_{k} := \bigcup_{n=1}^\infty \bigcup_{\ell=1}^\infty \mathcal{Z}_{k,n,\ell},$$
	which will be exceptional set of \eqref{e:two2} for $k$. Note that $\mathcal{Z}_{k} \subset \C$ is countable set since each $\mathcal{Z}_{k,n,\ell}$ is finite. Also,  $\mathcal{Z}_{k}$ is increasing in $k$ since $\mathcal{Z}_{k,n,\ell}$ is increasing in $k$ for each $n$,  $\ell$. Now we are ready to state our induction hypothesis. We claim that for any $\{a_{n,i}\}$ and $\{b_{n,i}\}$ satisfying \eqref{e:cond}, \eqref{e:two2} holds for any $k \in \N$ and $z \in \mathcal{Z}_k^c$. Let us emphasize that we already proved the case $k=1$ at the beginning of this proof. Assume that our claim is valid for $k=1,2,\dots,k_0 - 1$. Fix $w \in \mathcal{Z}^c_{k_0}$ and $\ell \in \N$, and define $\mathcal{N}_n$ and $\mathcal{N}_n^\ell$ as above. Note that it suffices to show that
	$$ \lim_{n \to \infty} \P(\mathcal{N}_n  ) = \lim_{n \to \infty} \P(\mathcal{N}_n(w)  )=0  $$
	By \eqref{e:11}, \eqref{e:5} and \eqref{e:6} we obtain
	\begin{align*}
	&\limsup_{n \rightarrow \infty} \P(\mathcal{N}_n) \le \limsup_{n \rightarrow \infty} \P(\mathcal{N}_n \backslash \mathcal{N}_n^\ell ) + \limsup_{n \rightarrow \infty} \P( \mathcal{N}_n^\ell ) \le 2^{-\ell} + \limsup_{n \rightarrow \infty} \P( \mathcal{N}_n^\ell ) ,\\
	&\le 2^{-\ell} + \limsup_{n \to \infty} \P\left(|L_n^{k_0}(w)(\xi_n)|<e^{-n \eps} \right),\\
	&\le 2^{-\ell} +	\limsup_{n \to \infty}\P\left(\left|L^{k_0-m_1}_{n-m+q}(w)\left(\nu_n \right)\right| < 2e^{-n\eps} \mbox{ for some } \nu_n=\{\nu_{n,i}\} \in C_{2,n,\ell} \right), \\
	&\le 2^{-\ell} + \sum_{\nu_n \in C_{2,n,\ell}} 	\limsup_{n \to \infty}\P\left(\left|L^{k_0-m_1}_{n-m+q}(w)\left(\nu_n \right)\right| < 2e^{-n\eps} \right).
	\end{align*}
	By the definition of $C_{k_0,n,\ell}$ and induction hypothesis, we first  have $|C_{2,n,l}| \le 2^{2 \ell}$. 
	
	For the sequence of 2-vector $\xi= \{\xi_n \}_{n \ge 1} = \{  (\xi^1_{n,i}, \xi^2_{n,i}    )   \}_{i \le n, n \ge 1}$, let us denote by $\mathcal{Z}_{k,n,\ell}(\xi_n)$ the set defined same as $\mathcal{Z}_{k,n,\ell}$, except that $C_{1,n,\ell}= \{ \xi_n  \}$. Then we observe that for any $m \ge 1$ and $\nu_n \in C_{2,n,\ell}$,
 $$\mathcal{Z}_{k_0-m,n,\ell}(\nu_n) \subset \mathcal{Z}_{k_0-m+1,n,\ell} \subset \mathcal{Z}_{k_0,n,\ell}.$$
	 %\blue{To be precise, when we construct $C_{k-m_1,n,l}$ from $\{\nu_{n,i}\}$, then this set is contained in $C_{k,n,l}$ so that $w$ is not in exceptional set with respect to $\{\nu_{n,i}\}$.}
Thus, we conclude that since $w \notin \mathcal{Z}_{k_0}$ and $m_1 \ge 1$,
	$$ \lim_{n \to \infty}\P\left(\left|L^{k_0-m_1}_{n-m+q}(w)\left(\nu_n \right)\right| < 2e^{-n\eps}  \right)=0 \quad \mbox{for any} \quad \nu_n \in C_{2,n,\ell}.$$
  Therefore, 
  $$ \limsup_{n \to \infty} \P(\mathcal{N}_n^\ell)=0.$$
	Now the claim follows from the fact that $\ell$ is arbitrary, which completes the proof.
\end{proof}

\subsection{Proof of Theorem~\ref{remove zero} and ~\ref{t: genadd}} \label{1.5,6}
\hfill
\medskip

In this subsection, we prove Theorem~\ref{remove zero} and ~\ref{t: genadd}. Recall that in these cases, considering random polynomials are constructed as giving randomness to deterministic ones.

\subsubsection{Proof of Theorem~\ref{remove zero}}
\hfill
\medskip

Let $\{a_{n,i}\}_{n\ge 1, 0\le i \le n}$ be a triangular array defined by
$a_{n,i}:=1-\delta_{s_n}(i)$, where $s_n$ is a random number distributed uniformly on the set $\{0,1,\cdots,n\}$. Assume that $\mu$ is non-atomic probability measure. Let $\{z_n\}_{n \ge 0}$ be a $\mu$-distributed and log-Cesaro bounded sequence.
\begin{lem}
	For Lebesgue a.e. $z \in \C$,
	\begin{equation}
	\lim_{n\rightarrow \infty }\P(|L^1_n(z)| < e^{-n \eps}) =0
	\end{equation} 
	for every $\eps>0$, where $L^1_n$ is given as 
	$$ L^1_n(z):= \frac{a_{n,0}}{z-z_0} + \frac{a_{n,1}}{z-z_1} + \dots + \frac{a_{n,n}}{z-z_n}.
	$$ 
\end{lem}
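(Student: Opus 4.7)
The sole source of randomness in $L^1_n(z)$ is the single uniform index $s_n$, so conditional on the deterministic sequence $\{z_i\}$ I rewrite
\[
L^1_n(z) = S_n(z) - \frac{1}{z-z_{s_n}},\qquad S_n(z) := \sum_{i=0}^{n}\frac{1}{z-z_i},
\]
and obtain the exact counting identity
\[
\P\bigl(|L^1_n(z)|<e^{-n\eps}\bigr) = \frac{|A_n(z)|}{n+1},\quad A_n(z) := \bigl\{\,0\le i\le n:\,|S_n(z)-(z-z_i)^{-1}|<e^{-n\eps}\,\bigr\}.
\]
Thus the statement reduces to the deterministic assertion $|A_n(z)|/(n+1)\to 0$ for Lebesgue a.e.\ $z$. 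I fix such a $z$ (avoiding the sequence $\{z_i\}$) and split $A_n(z)=A_n^{\mathrm{small}}\cup A_n^{\mathrm{large}}$ according to whether $|S_n(z)|<e^{-n\eps/3}$ or $|S_n(z)|\ge e^{-n\eps/3}$.

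\textbf{Small-$|S_n|$ bucket.} If $|S_n(z)|<e^{-n\eps/3}$, the defining inequality forces $|1/(z-z_i)|<2e^{-n\eps/3}$, hence $\log_+|z-z_i|\ge n\eps/3-\log 2$. The log-Cesaro hypothesis combined with $\log_+|z-z_i|\le\log_+|z|+\log_+|z_i|+\log 2$ yields $\sum_{i=0}^n\log_+|z-z_i|=O(n)$, so at most $O(1/\eps)$ indices qualify, and this bucket contributes $O(1/(n\eps))\to 0$.

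\textbf{Large-$|S_n|$ bucket.} If $|S_n(z)|\ge e^{-n\eps/3}$, the disk $B(S_n(z),e^{-n\eps})$ is bounded away from $0$, so the M\"obius map $u\mapsto 1/u$ sends it into a disk of radius $O(e^{-n\eps/3})$ around $1/S_n(z)$; consequently the defining condition implies $z_i\in B(w_n,r_n)$ with $w_n:=z-1/S_n(z)$ and $r_n=O(e^{-n\eps/3})\to 0$. To control this count with a moving center, I proceed by subsequences: given any subsequential limit of $|A_n^{\mathrm{large}}|/(n+1)$, I extract a further subsequence along which $w_{n_k}\to w^*\in\C\cup\{\infty\}$. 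When $w^*\in\C$, the Portmanteau direction gives
\[
\limsup_k\frac{|A_{n_k}^{\mathrm{large}}|}{n_k+1}\le\limsup_k\frac{|\{i:z_i\in\bar B(w^*,\rho)\}|}{n_k+1}\le\mu\bigl(\bar B(w^*,\rho)\bigr),
\]
which tends to $0$ as $\rho\downarrow 0$ by non-atomicity of $\mu$ at $w^*$. When $w^*=\infty$, the ball $B(w_{n_k},r_{n_k})$ eventually avoids any fixed compact $K$, so the count is bounded by $|\{i:z_i\notin K\}|/(n_k+1)\to\mu(K^c)$, arbitrarily small by tightness of $\mu$.

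\textbf{Main obstacle.} The delicate point is the large-$|S_n|$ bucket: weak convergence of the empirical measures does not supply a uniform bound on small-ball mass with a center that depends on $n$, and $w_n$ may a priori escape to infinity (when $|S_n(z)|$ is barely above the threshold). The subsequence-compactification in $\C\cup\{\infty\}$ paired with non-atomicity of $\mu$ (for the finite limits) and tightness of $\mu$ (for the limit at infinity) is what makes the argument go through. Crucially, the non-atomicity of $\mu$ is used precisely at this step, which explains its role in the hypothesis of Theorem~\ref{remove zero}.
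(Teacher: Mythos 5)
Your proof is correct, and while it shares the paper's core mechanism (reduce to the deterministic counting identity, observe that the ``bad'' indices $i\in A_n(z)$ have $z_i$ clustered in a shrinking ball, and extract a contradiction with non-atomicity of $\mu$ via Portmanteau), the technical execution is genuinely different. The paper argues by contradiction: assuming a positive $\limsup$ of $\P(|L_n^1(z)|<e^{-n\eps})$, it centers the shrinking ball $B_k$ at the smallest-modulus bad $z_i$ (denoted $w_{k,1}$), uses the log-Ces\`aro hypothesis precisely to keep that center bounded (so the radius $r_k\to0$), shows $B_k$ stays inside a fixed compact $K$, and then runs a finite-covering argument on $K$ to pin down a single closed ball $C$ with $\mu_k(C)\ge 2\delta$ along a subsequence yet $\mu(C)<\delta$. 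You instead argue directly, splitting on the magnitude of $S_n(z)=\sum_i(z-z_i)^{-1}$: in the small-$|S_n|$ regime the log-Ces\`aro bound alone shows $|A_n(z)|=O(1/\eps)$ because the qualifying $z_i$ must be exponentially far from $z$; in the large-$|S_n|$ regime you center the shrinking ball at the M\"obius target $w_n=z-1/S_n(z)$ (rather than at an actual $z_i$) and then extract subsequential limits in $\C\cup\{\infty\}$, invoking Portmanteau plus non-atomicity for a finite limit and tightness of $\mu$ for the limit at infinity. Your route buys a cleaner separation of concerns (the role of log-Ces\`aro is isolated in the small-$|S_n|$ case, tightness handles the escape to infinity, and the covering argument is replaced by a subsequence-compactification), at the minor cost of having to treat the one-point compactification explicitly; the paper's route avoids the case-split by letting log-Ces\`aro do double duty to keep the cluster-center bounded. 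Both use non-atomicity at exactly the same logical juncture and for exactly the same reason.
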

\begin{proof} Since $\mu$ is non-atomic, it suffices to show that if 
	$\limsup_{n\rightarrow \infty} \P(|L_n(z)|<e^{-n\eps})>0$ for some $\eps>0, z\in \C,$ there exist $x\in \C$ with $\mu(x)>0$. Suppose that 
	$$\displaystyle \limsup_{n\rightarrow \infty} \P(|L_n(z)|<e^{-n\eps})=3\delta>0
	$$ 
	for some fixed $\eps>0, z\in \C$.
	Then, there exists a subsequence $\{n_k\}_{k\ge1}$ satisfying 	
	\begin{equation}\label{A2:1}
	\displaystyle \P \left(|L_{n_k}(z)|<e^{-n_k\eps}\right)>2\delta
	\end{equation}
	Since the value $L_n(z)$ is just determined by $s_n \in \{0,1,\dots,n\}$, \eqref{A2:1} implies that 
	\begin{equation} 
	\label{A2:2}
	\left| 0\le i\le n_k : \big|\sum_{j=0}^{n_k} \frac{1}{z-z_j} - \frac{1}{z-z_i} \big|<e^{-n_k\eps} \right| > 2\delta (n_k+1).
	\end{equation}
	Let 
	$$N_k:=\left|0\le i\le n_k : \left|\sum \limits_{j=0}^{n_k} \frac{1}{z-z_j} - \frac{1}{z-z_i} \right|<e^{-n_k\eps} \right|,
	$$
	and $w_{k,1},w_{k_2},\dots,w_{k,N_k}$ are those values of $z_i$'s, i.e.,
	$$
	\displaystyle \left|\sum \limits_{j=0}^{n_k} \frac{1}{z-z_j} - \frac{1}{z-w_{k,j}} \right|<e^{-n_k\eps}, \quad 1 \le j \le N_k.
	$$ 
	Note that $w_{k,i}$ may have same values. Without loss of generality, we may assume $|w_{k,1}| \le |w_{k,2}| \le \dots \le |w_{k,N_k}|.$ Then by \eqref{A2:2}, for all $1\le i_1,i_2 \le N_k$,
	\begin{equation}\label{A2:3}
	\displaystyle  \left|\frac{1}{z-w_{k,i_1}}-\frac{1}{z-w_{k,i_2}}\right| < 2e^{-n_k \eps}.
	\end{equation}
	
	We will use the following inequality: 
	\begin{equation}\label{log ineq.} 
	\displaystyle \frac{1}{n_k}\sum_{i=1}^{n_k} \log_+ |z_i| \ge \frac{1}{n_k}\sum_{i=1}^{N_k} \log_+ |w_{k,i}| \ge \frac{N_k}{n_k} \log_+ |w_{n,1}| \ge 2\delta \log_+|w_{n,1}|.
	\end{equation}
	Notice that since $\{z_k\}$ is log Ces\'{a}ro bounded, $\log_+|w_{n,1}|$ is also bounded.
	
	Let $B_k:={\bar B(w_{k,1},r_k)}$ be closed ball centered at $w_{k,1}$ with radius 
	$$\displaystyle r_k:=\frac{2\exp(-{n_k}   \eps){R_k}^2}{1-2R_k \exp(-{n_k}\eps)}, \quad  R_k:=|z-w_{k,1}|.
	$$
	By \eqref{log ineq.}, $R_k$ is bounded, which implies $1>r_k>0$ for large $k$. Therefore, we may assume that $1>r_k>0$ for all $k\ge 1$ without loss of generality.
	Note that if $w \notin B_k$,  
	\begin{equation}\label{A2:4}
	\begin{array}{lcl}
	\displaystyle \left|\frac{1}{z-w_{k,1}}-\frac{1}{z-w}\right|&=& \displaystyle \left|\frac{w-w_{k,1}}{(z-w_{k,1})(z-w_k)}\right| ,
	\\
	\\
	&\ge& \displaystyle \left|\frac{r_k}{(z-w_{k,1})(z-w)}\right| \ge \frac{r_k}{R_k(R_k+r_k)} \ge 2e^{-n_k \eps}.
	\end{array}
	\end{equation}
	\\
	Then by \eqref{A2:3} and \eqref{A2:4}, $w_{k,i} \in B_k$ for all $1 \le i \le N_k$.
	
	Let $\displaystyle \mu_k:= \frac{1}{n_k+1} \sum_{i=0}^{n_k} \delta_{z_i}.$ Then, $\mu_k(B_k)=\frac{N_k}{n_k+1} \ge 2\delta$ and $\mu_k \rightarrow \mu$ weakly. Note that if $\mu_k \rightarrow \mu$ weakly, $\displaystyle \limsup_{k\rightarrow \infty}\mu_k(C) \le \mu(C)$ for all closed set C and $\displaystyle \liminf_{k\rightarrow \infty}\mu_k(U) \ge \mu(U)$ for all open set U.  
	
	We can find $\tilde{R}>0$ satisfying $\mu(B(0,\tilde{R}-2)) > 1-\delta/3.$ Since $\displaystyle \liminf_{k\rightarrow \infty}\mu_k(B(0,\tilde{R}-2)) \ge \mu(B(0,\tilde{R}-2))$, $\mu_k(B(0,\tilde{R}-2)) > 1-\delta/2.$ for large k. So $B_k \cap B(0,\tilde{R}-2) \neq \emptyset$. Since $r_k<1$, we can say that $B_k \subset \bar{B}(0,\tilde{R}):=K$ for large k. 
	
	Suppose $\mu$ has no point measure. Then, for all $x\in K$, there exist $\eps=\eps(x)>0$ such that $\mu(\bar{B}(x,\eps(x)))<\delta.$ $\{B(x,\frac{\eps(x)}{2})\}_{x\in K}$ makes open cover of compact set K and there exist finite open cover $\{B(x_i,\frac{\eps(x_i)}{2})\}_{i\le N}.$
	
	Let $\eps=\min\{ \eps(x_i)/2 : i \le N\}$. $r_k<\eps$ for large k since $r_k \downarrow 0.$ For each k with $r_k<\eps$ , there exists $i\le N$ satisfying $B_k \cap B(x_i, \eps(x_i)/2) \neq \emptyset$. So, $B_k \subset \bar{B}(x_i,\eps(x_i))$ for some $i$. Therefore, there exists $i\le N$ such that 	
	\begin{equation}\label{A2:5}
	\displaystyle B_k \subset \bar{B}(x_i,\eps(x_i))=:C\mbox{ for infinitely many }k. 
	\end{equation}
	Therefore, $2\delta \le \limsup_k \mu_k(C) \le \mu(C) < \delta$ is contradiction, where we used \eqref{A2:5} for the first inequality, and the fact $C$ is closed and $\mu_k \rightarrow \mu$ for the second inequality. Therefore, $\mu$ has point mass and the conclusion of the lemma follows.
	
\end{proof}

\subsubsection{Proof of  Theorem~\ref{t: genadd}}
\hfill
\medskip

Now we prove Theorem~\ref{t: genadd}. Recall that sequence of complex-valued random vector $(X_{n,1}, \dots, X_{n,k})$ with joint probability density $\nu_n(w_1, \dots, w_k)$ satisfies \eqref{c:add1}, \eqref{c:add2} and \eqref{c:add3}. The following lemma shows that \eqref{A12} holds for $L_n^1(z)$.
\begin{lem}\label{l:add1}
	For Lebesgue a.e. $z \in \C$,
	\begin{equation}\label{e:L_n^k:2}
	\lim_{n\rightarrow \infty }\P(|L^1_n(z)| < e^{-n \eps}) =0
	\end{equation} 
	for every $\eps>0$, where 
	$L^1_n(z):=\frac{P'_n(z)}{P_n(z)}$. Here $P_n(z)$'s are random polynomials given as 
	\begin{equation}\label{pn}
	\displaystyle P_n(z):=(z-z_{n,1})\cdots(z-z_{n,n})(z-X_{n,1}) \cdots (z-X_{n,k}). 
	\end{equation}.
\end{lem}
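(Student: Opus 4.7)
The plan is to isolate one of the $k$ random zeros---say $X_{n,1}$---and argue conditionally on the rest. Writing
\[
L_n^1(z) = T_n + \frac{1}{z-X_{n,1}}, \qquad T_n := \sum_{i=1}^n \frac{1}{z-z_{n,i}} + \sum_{j=2}^k \frac{1}{z-X_{n,j}},
\]
the quantity $T_n$ is independent of $X_{n,1}$, and I may restrict attention to $z\in\C\setminus\bigcup_{n,i}\{z_{n,i}\}$, which is of full Lebesgue measure.

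I would then split the event $\{|L_n^1(z)|<e^{-n\eps}\}$ according to whether $|T_n|\ge e^{-n\eps/4}$ or not. On $\{|T_n|\ge e^{-n\eps/4}\}$, the smallness of $L_n^1(z)$ places $1/(z-X_{n,1})$ in the disc $B(-T_n,e^{-n\eps})$, which avoids $0$ for large $n$; a short computation with the M\"obius map $u\mapsto z-1/u$ then confines $X_{n,1}$ to a disc around $z+1/T_n$ of radius $\le 2e^{-n\eps}/|T_n|^2\le 2e^{-n\eps/2}$. Applying the conditional-density bound from \eqref{c:add2} yields
\[
\P\!\left(|L_n^1(z)|<e^{-n\eps},\ |T_n|\ge e^{-n\eps/4}\right)\ \le\ 4\pi C_2\, e^{n^a - n\eps},
\]
which tends to $0$ since $a<1$. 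On the complementary event $\{|T_n|<e^{-n\eps/4}\}$, the triangle inequality forces $|1/(z-X_{n,1})|<2e^{-n\eps/4}$ and hence $|X_{n,1}|>e^{n\eps/4}/2-|z|\to\infty$; condition \eqref{c:add3} then makes this probability vanish. Summing the two contributions proves the lemma.

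I expect the main technical point to be the correct balance in the dichotomy: the M\"obius/area bound deteriorates as $|T_n|\to 0$ (the preimage disc has radius on the order of $e^{-n\eps}/|T_n|^2$), while the far-zero route of \eqref{c:add3} only becomes effective once $|T_n|$ is already small. The threshold $e^{-n\eps/4}$ balances these competing losses, and the sub-exponential factor $e^{n^a}$ with $a<1$ coming from \eqref{c:add2} is then comfortably absorbed by the Lebesgue gain of order $e^{-n\eps}$.
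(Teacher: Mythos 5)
Your proof is correct and establishes the lemma, but it takes a route that differs from the paper's in a couple of respects. The paper first reduces to $k=1$ by conditioning on $X_{n,1},\dots,X_{n,k-1}$ and absorbing them into the deterministic part of the array; then it works in the transformed coordinate $Y_n := 1/(z-X_{n,1})$, using \eqref{c:add3} to cut off the tail event $\{|X_{n,1}|\ge r_\delta - |z|\}$ so that the Jacobian factor in the density of $Y_n$ is bounded, and applying \eqref{c:add2} to get $\sup_x \P\left(|Y_n-x|\le e^{-n\eps}\right) \le C_3 e^{-2n\eps + c n^a}$. You instead keep $T_n$ random (but independent of $X_{n,1}$), condition only on $X_{n,2},\dots,X_{n,k}$, and split on $|T_n|$ rather than on $|X_{n,1}|$: when $|T_n|\ge e^{-n\eps/4}$ you remain in the $X_{n,1}$-coordinate and use Möbius inversion to confine $X_{n,1}$ to a disc of radius $O(e^{-n\eps}/|T_n|^2)\le 2e^{-n\eps/2}$, against which the conditional-density bound \eqref{c:add2} applies verbatim; when $|T_n|$ is small, cancellation forces $|X_{n,1}|\to\infty$ and you fall back on \eqref{c:add3}. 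Both approaches hinge on the sub-exponential factor $e^{n^a}$, $a<1$, being absorbed by the Lebesgue gain $e^{-n\eps}$. What you gain: you avoid the $k=1$ reduction entirely and also the pushforward density computation (with its Jacobian bookkeeping), working directly with the uniform conditional-density bound on $X_{n,1}$. What it costs: an extra dichotomy, and a touch more care in the small-$|T_n|$ branch, since \eqref{c:add3} is a double limit --- it does not literally say $\P(|X_{n,1}|\ge r_n)\to 0$ for $r_n\to\infty$. You should make this explicit: fix $\delta>0$, choose $r$ with $\limsup_n \P(|X_{n,1}|\ge r)<\delta$, observe that $e^{n\eps/4}/2 - |z| > r$ for $n$ large, and conclude by letting $\delta\to 0$; this is the same $r_\delta$ device the paper uses, just deployed at a different spot in the argument.
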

\begin{proof}
	First we claim that it suffices to show the case $k=1$. Assume that $\eqref{e:L_n^k:2}$ holds for $k=1$. Now for any fixed $X_{n,1}, \dots, X_{n,k-1}$ with general $k \ge 2$, we can define $w_{n+k-1,i}= z_{n,i}$ for $1 \le i \le n$, $w_{n+k-1,n+j} = X_{n,j}$ for $1 \le j \le k-1$ and $Y_{n,1}=X_{n+k,1}$ so that $\{w_{n,i}\}_{n \in \N, 1 \le i \le n}$ is $\mu$-distributed and $Y_{n,1}$ satisfies \eqref{c:add2} and \eqref{c:add3}. Thus we have $\eqref{e:L_n^k:2}$ for any fixed $X_{n,1}, \dots, X_{n,k-1}$, which implies $\eqref{e:L_n^k:2}$. So we can assume $k=1$ without loss of generality. 
	
	Fix $\delta >0$ and $z \in \C$ satisfying $z \neq z_{n,i}$ for any $n \in \N$ and $1 \le i \le n$. By \eqref{c:add3}, we have constants $r_\delta>0$ and $N_\delta \in \N$ such that
	$$ \P(|X_{n,1}| \ge r_\delta - |z|) \le \frac{\delta}{2} \quad \mbox{for} \quad n \ge N_\delta. 
	$$
	Let $\theta^{(n)}_z(w)dw$ be the conditional density of $\frac{1}{z-X_{n,1}} \1_{\{ |X_{n,1}| \le r_\delta - |z|  \}}$ given the others. Note that by \eqref{c:add2} we have
	$$   \theta^{(n)}_z(w) \le \frac{|z-X_{n,1}|^2 \sup_{w_i \in \C} \nu_n(w)}{\int_{\C} \nu_n(w) dw } \le  C_2  e^{cn^a}|z-X_{n,1}|^2 \le C_2 r_{\delta}^2 e^{cn^a}.$$ 
	By definition of $L_n^1$, we may write $$L_n^1(z)=\frac{1}{z-X_{n,1}} +  \sum_{i=1}^n \frac{1}{z-z_i} .$$
	Let us denote
	$$ Y_n := \frac{1}{z-X_{n,1}} \quad \mbox{and} \quad x_n:= \sum_{i=1}^n \frac{1}{z-z_i},
	$$
	i.e., $L_n^1(z) = Y_n + x_n$. Then for large $n$ satisfying $n \ge N_\delta$ and $\pi C_2 r_\delta^2  e^{-2n\eps+cn^a} \le \delta/2$, we have
	\begin{align*}
	\P(L_n^1(z) \le e^{-n\eps}) &\le \P(|X_{n,1}| \ge r_\delta - |z|) + \P( |Y_n+x_n| \le e^{-n\eps} ; |X_{n,1}| \le r_\delta - |z|) ,\\ &\le \frac{\delta}{2} + \sup_{x \in \C} \P( |Y_n| \le B(-x,e^{-n\eps});|X_{n,1}| \le r_\delta - |z|), \\
	&\le \frac{\delta}{2} + C_3 e^{-2n\eps+cn^a} \le \delta.
	\end{align*}
	for some constant $C_3$. Since $\delta>0$ is arbitrary, we obtain \eqref{e:L_n^k:2}. 
\end{proof} 

Now we are ready to use induction on $k$ to obatin \eqref{A12}. The following lemma completes the proof of Theorem~\ref{t: genadd}. 

\begin{lem}\label{l:add2}
	Let $k \in \N$ be the number of $X_{n,j}$'s. For Lebesgue a.e. $z \in \C$, we have
	\begin{equation}\label{e:L_n^k:3}
	\lim_{n\rightarrow \infty }\P(|L^\ell_n(z)| < e^{-n \eps}) =0
	\end{equation} 
	for every $\eps>0$ and $\ell \le k$, where $L^\ell_n(z):=\frac{1}{\ell!} \frac{P^{(\ell)}_n(z)}{P_n(z)}$.
	Here $P_n(z)$'s are random polynomials defined by \eqref{pn}.
\end{lem}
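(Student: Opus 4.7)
The plan is to prove Lemma~\ref{l:add2} by induction on $\ell$, with the induction hypothesis read uniformly in the number of random zeros $k\ge\ell$: at level $\ell-1$ I will invoke the statement for a polynomial with $k-1$ random zeros. The base case $\ell=1$ is exactly Lemma~\ref{l:add1}, which was already arranged to cover any $k\ge 1$ by conditioning on $X_{n,2},\dots,X_{n,k}$ and then treating $X_{n,1}$ as the single random zero.

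For the inductive step at level $\ell\ge 2$, I would split off $X_{n,1}$ and write
$$L_n^{\ell}(z) \;=\; \frac{1}{z-X_{n,1}}\,L_n^{\ell-1,1}(z) \;+\; L_n^{\ell,1}(z),$$
where, as in the proof of Theorem~\ref{iid zero k-th}, $L_n^{m,1}(z)$ denotes the elementary symmetric function of order $m$ in the quantities $\{1/(z-w)\}$ taken over $w\in\{z_{n,i}\}_{i\le n}\cup\{X_{n,j}\}_{2\le j\le k}$. The key observation is that $L_n^{\ell-1,1}(z)=\frac{1}{(\ell-1)!}\,Q_n^{(\ell-1)}(z)/Q_n(z)$, where $Q_n(z):=(z-z_{n,1})\cdots(z-z_{n,n})(z-X_{n,2})\cdots(z-X_{n,k})$ is the reduced polynomial obtained by deleting the factor $z-X_{n,1}$. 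Thus $Q_n$ has $n$ deterministic zeros and $k-1$ random zeros whose joint density is the marginal $\tilde\nu_n(w_2,\dots,w_k):=\int_{\C}\nu_n(w_1,w_2,\dots,w_k)\,dw_1$.

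Before invoking the induction hypothesis on $Q_n$, I would verify that $\tilde\nu_n$ inherits conditions \eqref{c:add1}, \eqref{c:add2} and \eqref{c:add3} with $k$ replaced by $k-1$. Conditions \eqref{c:add1} and \eqref{c:add3} are immediate (drop the $w_1$ summand, respectively shrink the max). For \eqref{c:add2} the required bound is
$$\sup_{w_2,\dots,w_k}\tilde\nu_n \;\le\; \int_\C \sup_{w_i\in\C}\nu_n\,dw_1 \;\le\; C_2 e^{n^a}\int_\C\!\int_\C \nu_n\,dw_i\,dw_1 \;=\; C_2 e^{n^a}\int_\C \tilde\nu_n\,dw_i,$$
which is exactly the required sub-exponential estimate. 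Applying the induction hypothesis at level $\ell-1$ to $Q_n$ therefore yields $\P(|L_n^{\ell-1,1}(z)|<e^{-n\eps/2})\to 0$ for Lebesgue-a.e.\ $z\in\C$.

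With that lower bound in hand, I would close the argument by the standard dichotomy used throughout the paper: on the event $\{|L_n^{\ell}(z)|<e^{-n\eps}\}\cap\{|L_n^{\ell-1,1}(z)|\ge e^{-n\eps/2}\}$ the splitting forces
$$\Bigl|\,\tfrac{1}{z-X_{n,1}}+\zeta_n\,\Bigr| \;<\; e^{-n\eps/2}, \qquad \zeta_n:=\frac{L_n^{\ell,1}(z)}{L_n^{\ell-1,1}(z)},$$
where $\zeta_n$ is measurable with respect to $\sigma(X_{n,2},\dots,X_{n,k})$. Conditioning on the latter, the conditional density of $X_{n,1}$ is bounded by $C_2 e^{n^a}$ in view of \eqref{c:add2}, so running the Lemma~\ref{l:add1} argument verbatim (use \eqref{c:add3} to restrict to $|X_{n,1}|\le r_\delta$, then change variables $w=1/(z-X_{n,1})$) bounds this conditional probability by $\delta/2 + C_3 e^{n^a-n\eps/2}$ for $n$ large. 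Combining the two contributions and sending $n\to\infty$ then $\delta\downarrow 0$ finishes the induction, since $a<1$. The main obstacle is really only the marginalisation check for \eqref{c:add2} displayed above; once that is in place the conditional-density estimate from Lemma~\ref{l:add1} and the dichotomy used in the i.i.d.\ case do all the remaining work.
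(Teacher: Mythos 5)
Your proof follows essentially the same route as the paper: an induction that splits off a single random zero via the decomposition $L_n^{\ell}(z)=\tfrac{1}{z-X_{n,j}}L_n^{\ell-1,j}(z)+L_n^{\ell,j}(z)$, applies the level-$(\ell-1)$ statement to the reduced polynomial $Q_n$ with one fewer random zero, and closes with the conditional-density small-ball estimate already used in Lemma~\ref{l:add1}. Your write-up is in fact slightly more careful than the paper's, since you explicitly verify that the marginal density $\tilde\nu_n$ inherits \eqref{c:add1}, \eqref{c:add2}, and \eqref{c:add3}, a check the paper silently assumes when invoking the induction hypothesis for $M_n^{k-1}$.
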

\begin{proof} Repeating the argument of Lemma \ref{l:add1}, we can obtain that it suffices to prove Lemma \ref{l:add2} for the case $\ell=k$.
	Define 
	$$ 
	Q_n(z):= (z-z_{n,1})\cdots(z-z_{n,n})(z-X_{n,1})\cdots(z-X_{n,k-1})= \frac{P_n(z)}{z-X_{n,k}}.
	$$  Then we observe that
	$$ 
	L_n^k(z) = \frac{1}{z-X_{n,k}} M^{k-1}_n(z) + M^k_n(z), 
	$$
	where $ M^l_n(z):= \frac{1}{l!}\frac{Q_n^{(l)}(z)}{Q_n(z)}$ for $l \in \N$. Thus, for any $\eps,\delta>0$ and any $z \in \C$ satisfying $z \neq z_{n,i}$ for all $n,i$, we have	
	\begin{align*}
	\P\left(|L^k_n(z)| < e^{-2n \eps}\right) &=\P \left( \left| \frac{M^{k-1}_n(z)}{z-X_{n,k}}+M^k_n(z) \right| < e^{-2n \eps}\right) ,\\ &\le  \P \left( \left| \frac{1}{z-X_{n,k}}+\frac{M^k_n(z)}{M^{k-1}_n(z)} \right| < e^{-n \eps} ; |X_{n,k}| \le r_\delta - |z| \right) ,
	\\
	&+\P(|M_n^{k-1}(z)| < e^{-n\eps}) + \P(|X_{n,k}| \ge r_\delta - |z|), 
	\end{align*}
	where $r_\delta>0$ is a constant in the proof of Lemma \ref{l:add1}. Note that by induction hypothesis and definition of $r_\delta$ we have
	$$ \lim_{n \to \infty}\P\left(|M_n^{k-1}(z)| < e^{-n\eps}\right) = 0 \quad \mbox{and} \quad \limsup_{n \to \infty} \P\left(|X_{n,k}| \ge r_\delta - |z|\right) \le \frac{\delta}{2}.$$
	Set 
	$$ Y_n:= \frac{1}{z-X_{n,k}} \1_{\{|X_{n,k}| \le r_\delta -z\} }. $$
	Then by \eqref{c:add2}, we obtain
	$$ \theta_z(w) \le C_3 r_{\delta}^2 n^a $$
	for density $\theta_z(w)$ of $Y_n$, and
	$$ \P \left( \left| \frac{1}{z-X_{n,k}}+\frac{M^k_n(z)}{M^{k-1}_n(z)} \right| < e^{-n \eps} ; |X_{n,k}| \le r_\delta - |z| \right)  \le \sup_{x \in \C} \P(|Y_n+x| < e^{-n\eps}), $$
	which goes to $0$ as $n \rightarrow \infty$. Therefore, we have
	$$ \limsup_{n \to \infty}\P\left(|L^k_n(z)| < e^{-2n \eps}\right) \le \delta. $$
	Now lemma follows from the fact that $\delta$ is an arbitrary constant.
\end{proof}

\section{Application to 2D Coulomb gas ensembles}

In this section, we prove Theorem~\ref{Coulomb critical}. Recall that the joint probability density of 2D Coulomb gas ensemble is given as
\begin{equation*}
d\mathbf{P}_n^{\beta}(\zeta_1, \cdots, \zeta_n)=  
\frac{1}{ Z_n^{\beta}}  \prod_{j,k:j<k}|\zeta_j-\zeta_k|^{2\beta}e^{-\beta n \sum_j Q(\zeta_j)} d\mbox{vol}_{2n},
\end{equation*}
where $\beta>0$ is inverse temperature and $Q: \C \rightarrow \R$ is external potential satisfying the assumptions \textbf{(A1)} in Section 1.

First we recall some definitions and properties of equilibrium measure which we will use in this section.
For any probability measure $\mu$, logarithmic potential $U^{\mu} : \mathbb{C} \rightarrow (-\infty, \infty]$  is defined by 
$ U^{\mu}(\zeta):=\int_{\mathbb{C}}\log \frac{1}{|\zeta-\eta|^2}d\mu(\eta) $  
and logarithmic energy $I[\mu]$ is given as
$ I[\mu]:=\int_{\mathbb{C}^2}\log \frac{1}{|\zeta-\eta|^2}d\mu(\zeta)d\mu(\eta)= 
\int_{\mathbb{C}} U^{\mu}(\zeta) d\mu(\zeta).$ 
A subset $\mathcal{N}$ of $\C$ is said to be polar if $I[\mu]=\infty$ for all compactly supported probability measures with $\mbox{supp} \,(\mu) \in \mathcal{N}$. 
We say that some property holds {\it{quasi-everywhere (q.e)}} on  $E \subset \mathbb{C}$ if it holds everywhere on $E$ except some Borel polar set. Note that every Borel probability measure with finite logarithmic energy assigns zero Lebesgue measure
to Borel polar sets.  

For given admissible potential $Q$, the {\it{weighted logarithmic energy}} $I_Q[\mu]$ for each probability measure $\mu$ is defined as 
$$ 
I_Q[\mu]:=\iint_{\mathbb{C}^2}\log \frac{1}{|\zeta-\eta|^2}d\mu(\zeta)d\mu(\eta)+2\int_{\mathbb{C}} Q d\mu.
$$
The following theorem is a collection of properties of equilibrium measure.
\begin{thm} \label{eq msr} {\rm\cite[Chap I Theorem 1.3]{ST97}}
	
	Suppose that the potential $Q$ is admissible and let $$I_{Q} := \inf \left\{ I_Q[\mu] \right\}, $$ where infimum is over all probability measures on $\C$. 
	Then the following properties hold. 
	\begin{enumerate}
		\item  There exists the unique probability measure $ \sigma_{Q} $ such that $$I_Q \left[\sigma_{Q} \right]=I_{Q}. $$
		\item 
		$ I_Q\left[\sigma_{Q} \right]=I_Q$ is finite. 
		\item
		$I[\sigma_{Q}]=I_Q-2 \int_{\C}Qd\sigma_{Q}$  is finite. 
		\item
		$S_{Q}:= \mbox{supp} \,(\sigma_{Q}) $ is compact.
		\item
		Let 
		$$
		F_{Q}:=I_Q-\int Q d\sigma_{Q}.
		$$ 
		Then 
		$$ 
		U^{ \sigma_{Q}}(\zeta) +Q(\zeta) \ge F_{Q} \quad \text{holds for q.e.  } \zeta \in \C. 
		$$
		and 
		$$
		U^{\sigma_{Q}}(\zeta)+Q(\zeta) = F_{Q} \quad \text{holds for q.e.  } \zeta \in S_{Q}.
		$$ 
	\end{enumerate}
\end{thm}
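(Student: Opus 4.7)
The plan is to follow the classical direct method of the calculus of variations applied to the weighted logarithmic energy functional $\mu \mapsto I_Q[\mu]$, which is the standard Saff--Totik proof. Four pillars carry the argument: finiteness of $I_Q$ together with lower semicontinuity and tightness of sublevel sets; existence via weak compactness; uniqueness via strict convexity of the logarithmic energy on mass-zero signed measures; and a first-variation computation to extract the Frostman-type characterization.

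First I would establish finiteness of $I_Q$. Admissibility guarantees that $\{Q < \infty\}$ has positive capacity, so there is a compactly supported probability measure $\mu_0$ with $I[\mu_0] < \infty$ and $\int Q\, d\mu_0 < \infty$, yielding $I_Q < \infty$. The elementary bound $\log|z-w|^{-1} \geq -\log(1+|z|) - \log(1+|w|)$ combined with the growth hypothesis $Q(z) - \log(1+|z|) \to \infty$ gives a uniform lower bound on $I_Q[\mu]$ and shows that sublevel sets $\{\mu : I_Q[\mu] \leq M\}$ are weakly tight. Lower semicontinuity of $I_Q$ on the weak topology follows by writing $\log|z-w|^{-1}$ as a supremum of bounded continuous truncations and invoking monotone convergence (together with lower semicontinuity of $Q$). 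Taking a minimizing sequence $\mu_n$, tightness provides a weak subsequential limit $\sigma_Q$, and lower semicontinuity forces $I_Q[\sigma_Q] = I_Q$, yielding items (1) and (2). Item (3) is then immediate: $\int Q\, d\sigma_Q$ must be finite (else $I_Q[\sigma_Q] = \infty$), so $I[\sigma_Q] = I_Q - 2 \int Q\, d\sigma_Q$ is finite.

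Uniqueness rests on strict convexity: if $\nu$ is a compactly supported signed measure with $\nu(\C) = 0$ and $I[|\nu|] < \infty$, then $I[\nu] \geq 0$ with equality iff $\nu \equiv 0$. A clean proof mollifies $\nu$ and applies the Fourier identity
$$I[\nu] = \frac{1}{2\pi} \int_{\R^2} \frac{|\hat\nu(\xi)|^2}{|\xi|^2}\, d\xi,$$
then removes the regularization by dominated convergence. If $\sigma_Q$ and $\tilde\sigma_Q$ both minimized, the convex combination identity $I_Q\bigl[\tfrac{1}{2}(\sigma_Q + \tilde\sigma_Q)\bigr] = I_Q[\sigma_Q] - \tfrac{1}{4}I[\sigma_Q - \tilde\sigma_Q]$ would contradict minimality unless $\sigma_Q = \tilde\sigma_Q$. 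Compactness of $S_Q$ (item 4) is then immediate: if $\sigma_Q$ placed positive mass outside every disk $\D_R$, the growth of $Q$ would force $\int Q\, d\sigma_Q = \infty$, contradicting (3).

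For the Frostman conditions (item 5), I would use a first-variation argument. For any compactly supported probability $\rho$ with $I[\rho] < \infty$, minimality of $\sigma_Q$ along the segment $\sigma_\varepsilon := (1-\varepsilon)\sigma_Q + \varepsilon \rho$ and the expansion
$$I_Q[\sigma_\varepsilon] - I_Q[\sigma_Q] = 2\varepsilon \int \bigl(U^{\sigma_Q} + Q\bigr)\, d(\rho - \sigma_Q) + O(\varepsilon^2)$$
yield $\int \bigl(U^{\sigma_Q} + Q\bigr)\, d\rho \geq F_Q$, using $\int (U^{\sigma_Q} + Q)\, d\sigma_Q = I[\sigma_Q] + \int Q\, d\sigma_Q = F_Q$. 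Specializing $\rho$ to finite-energy approximations of point masses and invoking lower semicontinuity of $U^{\sigma_Q} + Q$ upgrades this to $U^{\sigma_Q}(\zeta) + Q(\zeta) \geq F_Q$ q.e.\ on $\C$. Conversely, varying $\rho$ with $\mbox{supp}(\rho) \subset S_Q$ forces equality $\sigma_Q$-almost everywhere on $S_Q$, and a capacity argument promotes this to q.e.\ on $S_Q$. The main obstacle is precisely the polar-set bookkeeping: the variational inequality naturally holds when integrated against every finite-energy test measure, and promoting it to pointwise q.e.\ statements requires the capacity-theoretic principle that finite-energy measures do not charge polar sets, together with care in handling lower semicontinuity on the possible discontinuity set of $Q$.
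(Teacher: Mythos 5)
The paper offers no proof of this statement at all---it is quoted verbatim as \cite[Chap.~I, Theorem 1.3]{ST97}, and your sketch is a correct rendition of precisely the argument given there: direct method with lower semicontinuity and tightness from the growth condition, uniqueness from positivity of the logarithmic energy on neutral signed measures, compact support from the growth of $Q$, and the Frostman inequalities from the first variation combined with the principle that finite-energy measures do not charge polar sets. So you have reproduced the cited source's approach; nothing further is needed.
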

The measure $ \sigma_{Q} $ is called the \textit{equilibrium measure} associated with $Q$. The constant $F_Q$ is called the \textit{modified Robin constant} for $Q$.	

Using the notion of equilibrium measure, we immediately obtain the following lemma. 
\begin{lem} For Lebesgue a.e. $\zeta \in \C$, there exists a positive constant $C>0$ satisfying
	\begin{equation}\label{ub condi den}
	\frac{ e^{-\beta n \left( Q(\zeta)-2\int_{\C}\log|\zeta-z| d\sigma_Q(z)  \right) }    }{ \int_{\C} e^{-\beta n \left( Q(\zeta)-2\int_{\C}\log|\zeta-z| d\sigma_Q(z)  \right) } d\zeta     } < C.
	\end{equation}
\end{lem}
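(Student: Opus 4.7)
The plan is to identify the quantity in the exponent with the weighted logarithmic potential $Q(\zeta) + U^{\sigma_Q}(\zeta)$ and then apply the Frostman-type variational inequalities listed in Theorem~\ref{eq msr} to bound the numerator above and the denominator below by the same exponential factor $e^{-\beta n F_Q}$.

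First, note that by the definition of the logarithmic potential,
\begin{equation*}
 U^{\sigma_Q}(\zeta)=\int_{\C}\log\frac{1}{|\zeta-\eta|^{2}}\,d\sigma_Q(\eta)=-2\int_{\C}\log|\zeta-\eta|\,d\sigma_Q(\eta),
\end{equation*}
so that the exponent equals $-\beta n\bigl(Q(\zeta)+U^{\sigma_Q}(\zeta)\bigr)$. By Theorem~\ref{eq msr}\,(5), the Frostman inequality $Q(\zeta)+U^{\sigma_Q}(\zeta)\ge F_Q$ holds q.e.~on $\C$. Since every Borel polar set has zero Lebesgue measure, this inequality holds for Lebesgue a.e.~$\zeta\in\C$, and hence the numerator in \eqref{ub condi den} is bounded above by $e^{-\beta n F_Q}$ for such $\zeta$.

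For the denominator, I would use the equality $Q(w)+U^{\sigma_Q}(w)=F_Q$ that holds q.e.~on the droplet $S_Q=S$. Restricting the integral to $S_Q$ and discarding the (zero Lebesgue measure) polar exceptional set on which the equality might fail, one gets
\begin{equation*}
\int_{\C}e^{-\beta n(Q(w)+U^{\sigma_Q}(w))}\,dw\;\ge\;\int_{S_Q}e^{-\beta n F_Q}\,dw\;=\;m(S_Q)\,e^{-\beta n F_Q},
\end{equation*}
where $m$ is the Lebesgue measure. Under the assumptions \textbf{(A1)}, $\sigma_Q$ is absolutely continuous with density $\tfrac{1}{4\pi}\chi_S\Delta Q$, so $m(S_Q)>0$ (otherwise $\sigma_Q$ could not be a probability measure). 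Dividing the two bounds then gives the estimate with $C=1/m(S_Q)$, independent of $n$.

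The only mild subtlety I foresee is making sure that the exceptional polar sets arising from the two applications of Theorem~\ref{eq msr}\,(5) are genuinely Lebesgue-negligible, so that both inequalities may be used simultaneously for Lebesgue a.e.~$\zeta$ and inside the integral over $S_Q$; this is standard but worth pointing out. Nothing in the argument depends on the droplet being connected, which is exactly why the statement covers the Mittag-Leffler potential and other annular-droplet examples discussed before Theorem~\ref{Coulomb critical}.
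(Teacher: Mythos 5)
Your proof is correct and follows essentially the same route as the paper: identify the exponent with $-\beta n\bigl(Q+U^{\sigma_Q}\bigr)$, use the Frostman inequality/equality from Theorem~\ref{eq msr}\,(5) together with the fact that polar sets are Lebesgue-null, bound the numerator by $e^{-\beta n F_Q}$ and the denominator from below by $m(S_Q)\,e^{-\beta n F_Q}$, and conclude with $C = 1/m(S_Q)$. You are slightly more explicit than the paper in justifying $m(S_Q)>0$ (via absolute continuity of $\sigma_Q$ and boundedness of its density under \textbf{(A1)}), where the paper merely cites the compactness statement of Theorem~\ref{eq msr}\,(4); your added remark is a genuine improvement in precision but does not change the substance of the argument.
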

\begin{proof}
	Note that by Theorem~\ref{eq msr}(3), the Borel polar set has Lebesgue measure zero. Therefore by  Theorem~\ref{eq msr}(5),  we have 
	$$
	Q(\zeta)-2\int_{\C}\log|\zeta-z| d\sigma_Q(z)=	U^{\sigma_{Q}}(\zeta)+Q(\zeta) \ge F_{Q},
	$$   
	for a.e. $\zeta \in \C$ and
	$$
	Q(\zeta)-2\int_{\C}\log|\zeta-z| d\sigma_Q(z)=	U^{\sigma_{Q}}(\zeta)+Q(\zeta) = F_{Q},
	$$  
	for a.e. $\zeta \in S_Q$, which implies 
	\begin{align*}
	\frac{ e^{-\beta n \left( Q(\zeta)-2\int_{\C}\log|\zeta-z| d\sigma_Q(z)  \right) }    }{ \int_{\C} e^{-\beta n \left( Q(\zeta)-2\int_{\C}\log|\zeta-z| d\sigma_Q(z)  \right) } d\zeta     } &\le \frac{ e^{-\beta n F_Q }    }{ \int_{S_Q} e^{-\beta n F_Q } d\zeta      } \le \frac{1}{m(S_Q)},
	\end{align*}
	where $m$ denotes the Lebesgue measure in $\C$. Therefore, Theorem~\ref{eq msr}(4) concludes the lemma.
\end{proof}

We denote by $W_p$ the \textit{Wasserstein distance} of order $p$.  
In particular, if $p=1$, by Kantorovich-Rubinstein dual representation, we have
$$
W_1(\mu, \nu)= \sup_{||f||_{\rm Lip} \le 1} \int f(x) (\mu-\nu) (dx), \quad ||f||_{\rm Lip} := \sup_{x \neq y} \frac{|f(x)-f(y)|}{|x-y|}.
$$ 
Let $\mu_n$ be the empirical measure of Coulomb gas ensemble, i.e., 
$\mu_n = \frac{1}{n} \sum_{j=1}^{n} \delta_{\zeta_{j}}.$ The following concentration inequality is due to Chafa\"i, Hardy and Ma\"ida.  
\begin{prop} \label{Concent ineq.}{\rm \cite[Theorem 1.5.]{CHM17}}
	There exists a constant $a'>0$ such that for any $n \ge 2$, and $r >0$,
	\begin{equation}\label{e:wd}
	\P_n^{\beta} \left( W_1(\mu_n, \sigma_Q)\ge r \right) \le e^{-a' n^2 r^2  }.
	\end{equation}
\end{prop}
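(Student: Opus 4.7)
The plan is to combine two independent ingredients: a deterministic ``Coulomb transport inequality'' controlling $W_1(\mu,\sigma_Q)$ by a weighted logarithmic energy, and an exponential upper bound for $\P_n^\beta$ in terms of that same energy. Together they yield a Gaussian-type tail in $r$ at the scale $n^2$.

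First, I would establish the transport inequality
\begin{equation*}
W_1(\mu,\sigma_Q)^2 \;\le\; C\bigl(I_Q[\mu]-I_Q[\sigma_Q]\bigr)
\end{equation*}
for every compactly supported probability measure $\mu$ with finite logarithmic energy. Writing $\eta=\mu-\sigma_Q$, the constraint $\eta(\C)=0$ together with the Euler-Lagrange characterization in Theorem~\ref{eq msr}(5) gives $I_Q[\mu]-I_Q[\sigma_Q]=I[\eta]$ (the unweighted logarithmic energy of the signed measure). In two dimensions, the log-kernel is the Green's function of the Laplacian, yielding the Fourier identity $I[\eta]=c\int |\widehat{\eta}(\xi)|^2\,|\xi|^{-2}\,d\xi$, where the integrand is finite near $\xi=0$ precisely because $\eta$ has mean zero. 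Combined with the Kantorovich-Rubinstein duality $W_1(\mu,\sigma_Q)=\sup_{\|f\|_{\rm Lip}\le 1}\int f\,d\eta$ and a Cauchy-Schwarz bound on $\int \widehat{f}(\xi)\,\overline{\widehat{\eta}(\xi)}\,d\xi$ in frequency, this produces the stated quadratic bound.

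Second, I would relate $\P_n^\beta$ to the weighted energy of a regularized empirical measure. Let $\tilde\mu_n$ be the convolution of $\mu_n$ with a bump of scale $\sim 1/n$; then, expanding the Hamiltonian and re-grouping the off-diagonal log-sum,
\begin{equation*}
\log \frac{d\P_n^\beta}{d\mathrm{vol}_{2n}}(\zeta_1,\dots,\zeta_n) \;=\; -\beta n^2 I_Q[\tilde\mu_n] \;-\; \log Z_n^\beta \;+\; O(n\log n),
\end{equation*}
where the correction absorbs both the diagonal contribution missing from the Hamiltonian and the mollification error (the latter is controlled because $\sigma_Q$ has bounded density under \textbf{(A1)}, so $\log n$ factors stay quantitative). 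Standard log-gas asymptotics yield $\log Z_n^\beta = -\tfrac{\beta n^2}{2} I_Q[\sigma_Q] + o(n^2)$. Integrating over the event $A_r=\{W_1(\mu_n,\sigma_Q)\ge r\}$ and using that $W_1(\tilde\mu_n,\mu_n)=O(1/n)$ to replace the event by a perturbed version, one obtains
\begin{equation*}
\P_n^\beta(A_r)\;\le\;\exp\Bigl(-\beta n^2 \inf_{\mu\in A_r}\bigl(I_Q[\mu]-I_Q[\sigma_Q]\bigr)+O(n\log n)\Bigr).
\end{equation*}
Feeding the transport inequality into the infimum gives $\exp(-c\beta n^2 r^2 + O(n\log n))$, which produces the desired form $e^{-a' n^2 r^2}$ once the subleading term is absorbed; for $r \lesssim (\log n / n)^{1/2}$ the bound is trivial since probabilities are at most one, so a small $a'$ suffices there.

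The main obstacle is the Coulomb transport inequality itself. Unlike the Talagrand framework, which bounds $W_2$ by relative entropy via a log-Sobolev inequality, the natural ``entropy'' here is the non-local logarithmic energy, which is not strictly convex in any Riemannian sense. One has to argue directly in frequency space, carefully truncating at high frequencies (to convert the $|\xi|^{-2}$ weight into a usable Lipschitz norm against $\widehat{f}$) and exploiting mean-zero at low frequencies. A secondary technical point is tuning the mollification scale so that the discrete-to-continuous energy correction and the $W_1$-perturbation simultaneously fall in the $o(n^2 r^2)$ regime; too little smoothing leaves the logarithmic singularity on the diagonal non-integrable, while too much corrupts the transport inequality.
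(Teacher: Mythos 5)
The paper does not actually prove this proposition: it is imported verbatim from \cite[Theorem 1.5]{CHM17}, and your outline is in substance a reconstruction of the proof given there --- a Coulomb transport inequality bounding $W_1(\mu,\sigma_Q)^2$ by the excess weighted energy (proved via the $\dot H^{-1}$/Fourier representation of the logarithmic energy of the mean-zero signed measure $\eta=\mu-\sigma_Q$), combined with a quantitative Laplace-type upper bound obtained by mollifying $\mu_n$ at a small scale and lower-bounding $Z_n^\beta$ (this last step is where the finiteness of the Boltzmann--Shannon entropy $S(\sigma_Q)$, noted after \textbf{(A1)}, enters). Two points of detail. The Euler--Lagrange conditions of Theorem~\ref{eq msr}(5) give $I_Q[\mu]-I_Q[\sigma_Q]=I[\eta]+2\int(U^{\sigma_Q}+Q-F_Q)\,d\mu\ge I[\eta]$, not an equality; the extra term is nonnegative, so the inequality runs in the direction you need, but you should state it as such. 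And the Cauchy--Schwarz step in frequency indeed requires localization, since $\|\nabla f\|_{L^2(\R^2)}=\infty$ for a generic $1$-Lipschitz $f$; you correctly flag this as the main obstacle, and it also forces a separate confinement argument to reduce to compactly supported empirical measures.

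There is one genuine gap: your closing claim that for $r\lesssim(\log n/n)^{1/2}$ ``the bound is trivial since probabilities are at most one'' is false. The right-hand side $e^{-a'n^2r^2}$ is strictly less than $1$ for every $r>0$, so $\P_n^\beta(\cdot)\le 1$ does not imply it. What your argument actually delivers is $\P_n^\beta(W_1(\mu_n,\sigma_Q)\ge r)\le e^{-cn^2r^2+Cn\log n}$, which yields the clean Gaussian form only for $r\ge C'\sqrt{\log n/n}$. This is how the result is stated in \cite{CHM17}, where the $n\log n$ correction is retained; the proposition as quoted in the paper silently drops it. Nothing breaks downstream, because the paper applies the bound only with $r=\tfrac12 n^{-1/2+\eps}$, for which $n^2r^2=\tfrac14 n^{1+2\eps}\gg n\log n$; but to obtain the proposition literally as stated for all $r>0$ you would have to either keep the correction term in the statement or restrict the admissible range of $r$.
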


Using this concentration inequality, we prove following lemma. 
\begin{lem} \label{condi den}
	There exists $\ve>0$ such that for Lebesgue a.e. $\zeta \in \C$,
	\begin{equation}\label{e:1}
	\lim_{n \to \infty} \P_n^{\beta} \left( \frac{e^{-\beta n Q(\zeta)} \prod_{j=1}^{n-1}  \left| \zeta-\zeta_j \right|^{2\beta} }{ \int_{\C} e^{-\beta n Q(\zeta)} \prod_{j=1}^{n-1}  \left| \zeta-\zeta_j \right|^{2\beta}d\zeta  } \le \exp \left( n^{1-\frac{\ve}{2}} \right) \right)= 1.
	\end{equation}
\end{lem}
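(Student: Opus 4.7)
The plan is to treat the ratio in \eqref{e:1} as (proportional to) the conditional density of $\zeta_n$ given $\zeta_1,\dots,\zeta_{n-1}$, and to control its log-numerator and log-denominator separately around the common centering $-\beta nF_Q$. Introducing
\[
h_n(z):=-Q(z)+\frac{2}{n}\sum_{j=1}^{n-1}\log|z-\zeta_j|,
\]
the ratio equals $e^{\beta n h_n(\zeta)}\big/\int_\C e^{\beta n h_n(z)}\,dz$, so \eqref{e:1} amounts to showing
\[
\beta n\,h_n(\zeta)-\log\int_\C e^{\beta n h_n(z)}\,dz\;\le\; n^{1-\ve/2}
\]
with $\mathbf{P}_n^\beta$-probability tending to $1$, for Lebesgue-a.e.\ $\zeta$. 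I would work on the high-probability event $\{W_1(\mu_{n-1},\sigma_Q)\le n^{-1+\epsilon_0}\}$, obtained by combining Proposition~\ref{Concent ineq.} with the elementary estimate $W_1(\mu_{n-1},\mu_n)=O(1/n)$.

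For the numerator, I would approximate $\log|\zeta-z|$ from above by the Lipschitz truncation $f_\eta(z):=\log\max(|\zeta-z|,\eta)$, whose Lipschitz constant is $1/\eta$ and which satisfies $\int(f_\eta-\log|\zeta-\cdot|)\,d\sigma_Q=O(\eta^2)$ under the bounded-density hypothesis of \textbf{(A1)}. Kantorovich--Rubinstein duality then gives
\[
\frac{2}{n}\sum_{j=1}^{n-1}\log|\zeta-\zeta_j|\;\le\;-U^{\sigma_Q}(\zeta)+O\!\left(\eta^2+\tfrac{1}{\eta}\,n^{-1+\epsilon_0}\right).
\]
Optimizing $\eta=n^{-1/3}$ and invoking the Frostman-type inequality $Q+U^{\sigma_Q}\ge F_Q$ (quasi-everywhere) from Theorem~\ref{eq msr}(5) yields $\beta n\,h_n(\zeta)\le -\beta nF_Q+O(n^{1/3+\epsilon_0})$.

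For the denominator a pointwise lower bound on $h_n$ is not available because $\log$ has $-\infty$ singularities at each $\zeta_j$. Instead I would apply Jensen's inequality with $\sigma_Q$ as the sampling measure, restricting the outer integral to $S_Q$:
\[
\log\int_\C e^{\beta n h_n(z)}\,dz\;\ge\;\log\int_{S_Q}e^{\beta n h_n(z)-\log\rho_Q(z)}\,d\sigma_Q(z)\;\ge\;\beta n\int h_n\,d\sigma_Q+S(\sigma_Q),
\]
where $\rho_Q=d\sigma_Q/dz$ and $S(\sigma_Q)$ is finite by \textbf{(A1)}. Using $\int\log|z-\eta|\,d\sigma_Q(z)=-\tfrac12 U^{\sigma_Q}(\eta)$, the first term on the right equals the linear statistic $-\int Q\,d\sigma_Q-\tfrac{1}{n}\sum_{j=1}^{n-1}U^{\sigma_Q}(\zeta_j)$, whose deterministic limit is $-F_Q$ by integrating Theorem~\ref{eq msr}(5) against $\sigma_Q$. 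Since the bounded, compactly supported $\rho_Q$ makes $U^{\sigma_Q}$ globally Lipschitz (its gradient is bounded on compact sets and decays at infinity), this linear statistic concentrates at rate $O(n^{-1+\epsilon_0})$ via a second application of Proposition~\ref{Concent ineq.}, and hence $\log\int_\C e^{\beta n h_n}\,dz\ge -\beta nF_Q-O(n^{\epsilon_0})$.

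Subtracting, the log-ratio is $O(n^{1/3+\epsilon_0})$ with high probability, which is majorized by $n^{1-\ve/2}$ for any fixed $\ve<\tfrac{4}{3}-2\epsilon_0$ (e.g.\ $\ve=1$), proving \eqref{e:1}. The main obstacle is the $-\infty$ singularity of $\log$ at each $\zeta_j$: it forces the asymmetric treatment above (pointwise truncation-from-below for the numerator, Jensen plus a Lipschitz linear statistic for the denominator), and requires calibrating the truncation parameter $\eta$ so that, after multiplication by $\beta n$, the $W_1$-rate of Proposition~\ref{Concent ineq.} still leaves a strictly sub-linear margin on both sides.
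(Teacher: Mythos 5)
Your argument is correct and reaches the same conclusion, but the treatment of the denominator is genuinely different from the paper's. For the numerator both proofs use the same idea — replace $\log|\zeta-\cdot|$ by a Lipschitz truncation, apply Kantorovich--Rubinstein against Proposition~\ref{Concent ineq.}, and finish with the Frostman inequality $U^{\sigma_Q}+Q\ge F_Q$ q.e.\ (you pick the truncation scale to balance the $\eta^2$ bias against the Lipschitz/Wasserstein error; the paper ties the truncation directly to its chosen $W_1$ scale $n^{-1/2+\eps}$, so the calibration differs but the mechanism is identical). For the denominator the paper again works pointwise: it applies Markov's inequality to the $\sigma_Q$-average of the truncation error to produce a bad set $A$ with $\sigma_Q(A)\to 0$, shrinks the domain of integration to $S_Q\setminus A$, and uses the pointwise lower bound obtained from the uniform $W_1$ estimate there. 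You instead lower-bound $\int_\C e^{\beta n h_n}\,dz$ in one stroke by Jensen's inequality with $\sigma_Q$ as sampling measure, reducing everything to the linear statistic $\int U^{\sigma_Q}\,d\mu'_n$ plus the entropy term $S(\sigma_Q)$; the observation that $U^{\sigma_Q}$ is globally Lipschitz under \textbf{(A1)} (bounded, compactly supported $\rho_Q$) then lets a single $W_1$ bound control it. This is cleaner: it avoids the bad-set construction entirely and, in particular, sidesteps the slightly delicate step in the paper's proof where a bound on $\sigma_Q(A)$ is converted into the claim $m(A)\le m(S_Q)/2$ (which implicitly requires some control of $\rho_Q$ from below). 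The trade-off is that the paper's choice $W_1\lesssim n^{-1/2+\eps}$ preserves an exponential ($e^{-an^{1+2\eps}}$) failure probability, whereas your $W_1\lesssim n^{-1+\epsilon_0}$ gives only a stretched-exponential rate $e^{-a'n^{2\epsilon_0}}$; both suffice for the lemma as stated (and for the later use in Theorem~\ref{Coulomb critical}), and your Jensen argument in fact works equally well with the paper's $W_1$ scale if one prefers the stronger rate.
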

\begin{proof}
	For some small $\eps>0$, set 
	\begin{align*}
	f_{n,\zeta}(z)&:=  \log (n^{1/2-2\eps}|z-\zeta|)_+ - (1/2 -2\eps) \log n; \\
	&= \begin{cases} \log |z-\zeta|, &\text{if}\quad |z-\zeta| \ge n^{-1/2+2\eps} \\ (-1/2+2\eps)\log n, &\text{if}\quad |z-\zeta| < n^{-1/2+2\eps}.  \end{cases}
	\end{align*}
	Using the concentration inequality \eqref{e:wd} with the choice $r = \frac{1}{2}n^{-1/2+\eps}$, we obtain
	$$\P_n^{\beta}\left(W_1(\mu_n, \sigma_Q) \ge \frac{1}{2}n^{-1/2+\eps} \right) \le e^{-2a n^{1+2\eps} }, $$
	for some positive constant $a>0$. Set $\mu'_n:= \frac{1}{n-1} \sum_{j=1}^{n-1} \delta_{\zeta_j}$. Then we have
	$$\P_n^{\beta}\left(W_1(\mu'_n, \sigma_Q) \ge n^{-1/2+\eps} \right) \le e^{-a n^{1+2\eps} }, $$
	for large $n$. Indeed, we have $W_1(\mu_n,\mu'_n) \le \frac{1}{n(n-1)} \sum_{j=1}^{n-1} |\zeta_j - \zeta_{n}| \to 0$ as $n \to \infty$ in probability, which follows from the tightness property of $\zeta_j$, see \cite[Theorem 1.12]{CHM17}. Thus, using $\| f_{n,\zeta} \|_{\rm Lip} = n^{1/2-2\eps}$ for every $\zeta \in \C$, we obtain
	$$ \P_n^{\beta} \left( \sup_{\zeta \in \C} \left| \int f_{n,\zeta}(z) \mu'_n(dz) - \int f_{n,\zeta}(z) \sigma_Q(dz) \right|  \ge n^{-\eps}     \right) \le e^{-a n^{1+2\eps} }. $$
	Also recall that conditions \textbf{(A1)}-(i),(iii) deduce the boundedness of the density of $\sigma_Q$. Thus, using the definition of $f_{n,\zeta}$ we have 
	\begin{align*}
	\int \left( f_{n,\zeta}(z)- \log|z-\zeta| \right) \sigma_Q(dz) \le \int_{B(\zeta,n^{-1/2+2\eps})}  -\log|z-\zeta| \sigma_Q(dz) \le c \, n^{-1+5\eps}.
	\end{align*}
	Combining these estimates with the fact that $f_{n,\zeta}(z) \ge \log|z-\zeta|$, we conclude that for large $n$,
	\begin{align*}
	&\P_n^{\beta} \left( \sup_{\zeta \in \C}  \int \log|z-\zeta| \mu'_n(dz) - \int \log|z-\zeta| \sigma_Q(dz) \ge 2n^{-\eps}     \right)  	\label{e:5.3.1}
	\\
	\le& \P_n^{\beta} \left( \sup_{\zeta \in \C}  \int \log|z-\zeta| \mu'_n(dz) - \int f_{n,\zeta}(z) \sigma_Q(dz)   \ge  2n^{-\eps} - c \, n^{-1+5\eps}  \right) , \nonumber
	\\
	\le& \P_n^{\beta}\left(\sup_{\zeta \in \C}   \int f_{n,\zeta}(z) \mu'_n(dz) - \int f_{n,\zeta}(z) \sigma_Q(dz) \ge n^{-\eps} \right) \le e^{-an^{1+2\eps}}. \nonumber
	\end{align*}
	Then by Theorem~\ref{eq msr}(5) and the fact that 
	\begin{align*}
	e^{-\beta nQ(\zeta)} \prod_{j=1}^{n} |\zeta - \zeta_j|^{2\beta} = \exp \left[ -\beta n \left\{Q(\zeta) + \int \log \frac{1}{|z-\zeta|^2} \mu_n(dz) \right\} \right],
	\end{align*}
	we have
	\begin{equation}\label{e:5.3.2}
	\P_n^{\beta} \left( \sup_{\zeta \in \C} e^{-\beta nQ(\zeta)} \prod_{j=1}^{n} |\zeta - \zeta_j|^{2\beta} \ge \exp(-n \beta F_Q + \beta n^{1-\eps}) \right) \le e^{-an^{1+2\eps}}.
	\end{equation}
	On the other hand, since  $\int_{\C} f_{n,\zeta}(z) - \log|z-\zeta|  \sigma_Q(d\zeta) \le  c \,n^{-1+5\eps} $, we have
	\begin{align} 
	\begin{split}\label{e:5.3}
	&\int_{S_Q} \int_{\C} f_{n,\zeta}(z) - \log|z-\zeta| \mu'_n(dz) \sigma_Q(d\zeta) \\
	&= \int_{\C} \int_{S_Q} f_{n,\zeta}(z) - \log|z-\zeta|   \sigma_Q(d\zeta) \mu'_n(dz),  \\
	&\le \int_\C cn^{-1+5\eps} \mu'_n(dz) = c \, n^{-1+5\eps}.
	\end{split}
	\end{align}
	Set 
	$$
	A:=\left\{ \zeta\in S_Q : \int_{\C} \left| \log|z-\zeta| - f_{n,\zeta}(z)\right| \mu'_n(dz) \ge n^{-1+6\eps} \right\}.
	$$
	Then by Chebyshev inequality and \eqref{e:5.3}, we have
	\begin{align*}
	\lim_{n \to \infty} \sigma_Q(A) \le \lim_{n \to \infty} \frac{c \, n^{-1+5\eps}}{n^{-1+6\eps}}=0.
	\end{align*}
  Note that	we may assume that $m(A) \le m(S_Q)/2$ since $\sigma_Q$ is absolutely continuous with respect to Lebesgue measure.  
	Therefore, we obtain that for large $n$,
	\begin{align*}
	&\P_n^{\beta} \left( \int_{\C} e^{-\beta n Q(\zeta)} \prod_{j=1}^{n-1}  \left| \zeta-\zeta_j \right|^{2\beta}d\zeta \le \int_{S_Q\sm A} e^{-\beta n F_Q - \beta n ^{1-\eps} - c\beta n^{5\eps} } d\zeta \right) \\ 
	\le& \P_n^{\beta} \left( \int_{S_Q \sm A} e^{-\beta n Q(\zeta)} \prod_{j=1}^{n-1}  \left| \zeta-\zeta_j \right|^{2\beta}d\zeta \le \int_{S_Q \sm A} e^{-\beta n F_Q - \beta n ^{1-\eps}- c\beta n^{5\eps} } d\zeta \right),
	\\
	\le& \P_n^{\beta} \left(  \inf_{\zeta \in \C} e^{-\beta n Q(\zeta)} \prod_{j=1}^{n-1}  \left| \zeta-\zeta_j \right|^{2\beta} \le e^{-\beta n F_Q - \beta n^{1-\eps} -c\beta n^{5\eps} } \right),
	\\ \le& \P_n^{\beta}\left(\sup_{\zeta \in \C} \left|  \int f_{n,\zeta}(z) \mu'_n(dz) - \int \log|z-\zeta|\sigma_Q(dz)\right| \ge n^{-\eps} + cn^{-1+5\eps} \right), \\
	\le& \P_n^{\beta}\left(\sup_{\zeta \in \C} \left|  \int f_{n,\zeta}(z) \mu'_n(dz) - \int f_{n,\zeta}(z) \sigma_Q(dz) \right| \ge n^{-\eps}  \right)
	\le e^{-an^{1+2\eps}}.
	\end{align*}   
	This and \eqref{e:5.3.2} proves the lemma.         
	
\end{proof}

\begin{proof}[Proof of Theorem~\ref{Coulomb critical}]
	Fix $k \in \N$. For $n \in \N$, let us denote by $\zeta_{n}, \dots, \zeta_{n}$ the $n$-th Coulomb gas ensembles. Set $z_{n,i}= \zeta_{i}$ for $1 \le i \le n-k$ and $Y_{n,i}=\zeta_{n-k+i}$ for $1 \le i \le k$. Recall that all we need to show is 
\begin{equation} \label{Coulomb eq 123}
 \lim_{n \to \infty} \P(|L_n^k(z)| < e^{-n\eps})= 0, \quad  L^k_n(z):=\frac{1}{k!} \frac{P^{(k)}_n(z)}{P_n(z)},
\end{equation}
where $P_n(z):=(z-\zeta_1)\cdots(z-\zeta_n)$.

For each $0 \le i \le k-1$, let $\mathcal{N}_{n,i}$ be the subset of sample space, which satisfies 
$$ \sup_{\zeta \in \C} \left[ \frac{e^{-\beta n Q(\zeta)} \prod_{j=1, j \neq n-i}^{n}  \left| \zeta-\zeta_j \right|^{2\beta} }{ \int_{\C} e^{-\beta n Q(z)} \prod_{j=1}^{n-1}  \left| z-\zeta_j \right|^{2\beta}dz  } \right] > \exp \left( n^{1-\frac{\epsilon}{2}} \right)
$$
and set $\mathcal{N}_n:= \cup_{i=0}^{k-1} \mathcal{N}_{n,i}$. Then by Lemma~\ref{condi den}, 
$$\P_n^{\beta}(\mathcal{N}_n) \le \sum_{i=0}^{k-1} \P_n^{\beta}(\mathcal{N}_{n,i}) \le k e^{-an^{1+2\eps}},
$$ 
which implies $\lim_{n \to \infty} \P_n^{\beta}(\mathcal N_n)=0$.

In the case of $\mathcal{N}^c_n$, we verify \eqref{Coulomb eq 123} as a consequence of Lemma~\ref{l:add2}. Therefore all we need to check is \eqref{c:add1}, \eqref{c:add2}, and \eqref{c:add3} for Coulomb gas ensemble. First note that \eqref{c:add2} is obtained from the construction of $\mathcal{N}_n$. To establish \eqref{c:add1}, notice that
	$$ \log_+ |\zeta| e^{-\beta n Q(\zeta)} \le e^{-\beta n Q_0(\zeta)}, $$
	where $Q_0(\zeta) = Q(\zeta) - |\zeta|$. Then by the assumption \textbf{(A1)}-(ii), the partition function of the Coulomb gas with potential $Q_0$ is also finite, which implies \eqref{c:add2}.
 Finally, \eqref{c:add3} follows from the well-known tightness of Coulomb  gas ensemble, see e.g., \cite[Theorem 1.12]{CHM17}.
Combining these, we obtain that 
	$$\lim_{n \to \infty} \P(|L_n^k(z)| < e^{-n\eps} ; \mathcal{N}_n^c) = 0. $$
	Therefore we conclude
	$$ \lim_{n \to \infty} \P(|L_n^k(z)| < e^{-n\eps}) \le \lim_{n \to \infty} [\P(|L_n^k(z)| < e^{-n\eps} ; \mathcal{N}_n^c) + \P(\mathcal N_n)] = 0,$$
	which completes the proof.
\end{proof}

\section{Questions}

\begin{enumerate}
	\item It is expected that the Theorem \ref{remove zero} hold for any probability measure $\mu$. Does the proof of Theorem \ref{remove zero} extend to the case when the limiting measure $\mu$ has atoms as well? 
	\item In the spirit of \cite{hanin1,hanin2}, one may ask if it is possible to show that most of the zeros have a critical point with in a distance of $O(\frac{1}{n})$. Establishing this would imply a natural pairing between zeros and critical points. Once the pairing is established, it will be of interest to study how the sum of pairwise distances (matching distance) behaves with $n$. This was studied in the case when all the zeros are real in \cite[Chapter-4]{Re16a}.
	\item Extending the previous question, it is pertinent to ask about the density of the distances between zeros and critical points in the scale of $\frac{1}{n}$. A particular case being the study of critical points of characteristic polynomial of Haar distributed unitary matrix. In this case one can notice that the law of zeros and hence for the critical points is rotationally invariant. The characteristic polynomial of CUE random matrix is believed to model Riemann zeta function and this problem is of interest in the study of critical points of Riemann zeta function. For more on this problem see \cite{CUE_Riemann_zeta} and references there in.  \\
	%	{\color{red}Elaborate more!}   
\end{enumerate}

\noindent\textbf{Acknowledgments:} The authors would like to thank the organizers of the Second ZiF Summer School on Randomness in Physics and Mathematics, held at Bielefeld in August 2016, where this work was initiated.

%%%%%%%%%%%%%%%%%%%%%%%%%%%%%%%%%%
%%%%%%%%%%%%%%%%%%%%%%%%%%%%%%%%%%
%%%%%%%%            References
%%%%%%%%%%%%%%%%%%%%%%%%%%%%%%%%%
%%%%%%%%%%%%%%%%%%%%%%%%%%%%%%%%%%

\bibliographystyle{abbrv}
\bibliography{draft_bibliography}

\end{document}